\theoremstyle{plain}
\newtheorem{Thm}{Theorem}[section]
\newtheorem{Lem}[Thm]{Lemma}
\newtheorem{Prop}[Thm]{Proposition}
\newtheorem{Question}[Thm]{Question}
\theoremstyle{definition}
\newtheorem{Def}[Thm]{Definition}
\newtheorem{Def-Lem}[Thm]{Definition-Lemma}
\newtheorem{Cond}[Thm]{Condition}
\newtheorem{Rem}[Thm]{Remark}
\newtheorem{Ex}[Thm]{Example}
\newtheorem*{Ack}{Acknowledgments}
\theoremstyle{remark}
\newcommand{\Hom}{\operatorname{Hom}}
\newcommand{\prt}{\partial}
\newcommand{\rank}{\operatorname{rank}}
\newcommand{\Sing}{\operatorname{Sing}}
\newcommand{\Spec}{\operatorname{Spec}}
\newcommand{\NQsm}{\operatorname{NQsm}}
\newcommand{\Cl}{\operatorname{Cl}}
\newcommand{\mbA}{\mathbb{A}}
\newcommand{\mbC}{\mathbb{C}}
\newcommand{\mbF}{\mathbb{F}}
\newcommand{\mbP}{\mathbb{P}}
\newcommand{\mbQ}{\mathbb{Q}}
\newcommand{\mbR}{\mathbb{R}}
\newcommand{\mbZ}{\mathbb{Z}}
\newcommand{\mcF}{\mathcal{F}}
\newcommand{\mcI}{\mathcal{I}}
\newcommand{\mcL}{\mathcal{L}}
\newcommand{\mcM}{\mathcal{M}}
\newcommand{\mcN}{\mathcal{N}}
\newcommand{\mcO}{\mathcal{O}}
\newcommand{\mcQ}{\mathcal{Q}}
\newcommand{\mcW}{\mathcal{W}}
\newcommand{\mcY}{\mathcal{Y}}
\newcommand{\mcX}{\mathcal{X}}
\newcommand{\mfm}{\mathfrak{m}}
\newcommand{\msM}{\mathsf{M}}
\newcommand{\msp}{\mathsf{p}}
\newcommand{\msq}{\mathsf{q}}
\newcommand{\K}{\Bbbk}
\newcommand{\inj}{\hookrightarrow}
\newcommand{\ratmap}{\dashrightarrow}
\newcommand{\crit}{\operatorname{cr}}
\newcommand{\wt}{\operatorname{wt}}
\newcommand{\CH}{\operatorname{CH}}
\title[Stable rationality of orbifold Fano threefolds]{Stable rationality of orbifold Fano threefold hypersurfaces}
\author[Takuzo Okada]{Takuzo Okada}
\address{Department of Mathematics, Faculty of Science and Engineering\endgraf
Saga University, Saga 840-8502 Japan}
\email{okada@cc.saga-u.ac.jp}
\subjclass[2010]{Primary 14E08 ; Secondary 14J45, 14J70.}
\date{}
\begin{document}

\begin{abstract}
We determine the rationality of very general quasismooth Fano $3$-fold weighted hypersurfaces completely and determine the stable rationality of them except for cubic $3$-folds. 
More precisely we prove that (i) very general Fano $3$-fold weighted hypersurfaces of index $1$ or $2$ are not stably rational except possibly for the cubic threefolds, (ii) among the $27$ families of Fano 3-fold weighted hypersurfaces of index greater than $2$, very general members of specific $7$ families are not stably rational and the remaining $20$ families consists of rational varieties.
\end{abstract}

\maketitle


\section{Introduction}

The aim of this paper is to study (stable) rationality of orbifold Fano $3$-fold hypersurfaces by the combination of the specialization argument of universal $\CH_0$-triviality initiated by Voisin \cite{Voisin}, developed by Colliot-Th\'el\`ene and Pirutka \cite{CTP}, and the reduction modulo $p$ argument by Koll\'ar \cite{Kollar1,Kollar}.
The combination of these two arguments is firstly applied by Totaro \cite{Totaro} in the proof of stable non-rationality of hypersurfaces.
We recall basic notions and some backgrounds briefly.

A projective variety $X$ is {\it rational} if $X$ is birational to the projective space, and $X$ is {\it stably rational} if there exists $m \ge 0$ such that $X \times \mbP^m$ is rational.
Rationality problem, or in other words determining rationality of algebraic varieties, is a one of the fundamental problems in algebraic geometry.
In dimension $3$, the minimal model program reduces this problem to the same problem for Fano 3-folds of Picard number one, del Pezzo fibrations over $\mbP^1$ and conic bundles over rational surfaces.
We focus on Fano 3-folds (of Picard number one).
Smooth Fano 3-folds have been well studied in this direction and their (stable) rationality is determined (cf.\ \cite{HT}).
A Fano 3-fold of Picard number one, which appears as an outcome of the MMP, in general has $\mbQ$-factorial and terminal singularities and it is necessary to study (stable) rationality of such mildly singular Fano 3-folds.

By an {\it orbifold Fano $3$-fold hypersurface}, we mean a Fano $3$-fold with at most terminal singularities embedded in a weighted projective $4$-space as a well formed and quasi-smooth hypersurface (see Definition \ref{def:qsmwf}).
An orbifold Fano $3$-fold hypersurface is $\mbQ$-factorial, has Picard number $1$ and has only isolated cyclic quotient terminal singularities.
For an orbifold Fano 3-fold hypersurface $X = X_d \subset \mbP (a_0,\dots,a_4)$, where the subscript $d$ indicates the degree of the defining equation of $X$, the positive integer $\alpha$ such that $\mcO_X (-K_X) \cong \mcO_X (\alpha)$ is called the {\it index} (or {\it Fano index}) of $X$.
Explicitly, the index is given by $\alpha = a_0 + \cdots + a_4 - d$. 
There are $130$ families of orbifold Fano $3$-fold hypersurfaces (see \cite{Fletcher}, \cite{BS1}, \cite{BS2}).
Among them, $95$ families consist of index $1$ Fano $3$-folds.
We recall known results on (stable) rationality for orbifold Fano $3$-fold hypersurfaces.

Rationality questions for orbifold Fano $3$-fold hypersurfaces of index $1$ are settled in \cite{IM,CPR,CP} where it is proved that they are birationally rigid, and in particular nonrational.
Among them there are quite a few varieties whose stable non-rationality is known, namely, a very general quartic $3$-fold \cite{CTP} and a very general hypersurface of degree $6$ in $\mbP (1,1,1,1,3)$, which is a double cover of $\mbP^3$ branched along a very general hypersurface of degree $6$ \cite{Beauville}.

Rationality questions for orbifold Fano $3$-fold hypersurfaces of index greater than $1$ have not been settled yet.
It is well known that cubic $3$-folds are not rational \cite{CG}.
Stable non-rationality is known for a very general hypersurface of degree $4$ in $\mbP (1,1,1,1,2)$, which is a double cover of $\mbP^3$ branched along a very general hypersurface of degree $4$ \cite{Voisin}, and for a very general hypersurface of degree $6$ in $\mbP (1,1,1,2,3)$ \cite{HT}.
The above Fano 3-folds are all smooth.
In \cite{Okada1}, rationality questions for weighted hypersurfaces are studied and it is in particular proved that there are two families of singular orbifold Fano $3$-fold hypersurfaces of index $> 1$ whose very general members are not rational.

We state the main theorem of this paper, which completely settles rationality questions for very general orbifold Fano $3$-fold hypersurfaces, and also settles stable rationality questions for them except for cubic $3$-folds.

\begin{Thm} \label{mainthm1}
Suppose that the ground field is the complex number field.
\begin{enumerate}
\item A very general orbifold Fano $3$-fold hypersurface of index $1$ is not stably rational.
\item A very general orbifold Fano $3$-fold hypersurface of index $2$ is not stably rational except possibly for cubic $3$-folds.
\item  Among the $27$ families of Fano $3$-fold hypersurfaces of index greater than $2$, $20$ families consist of rational varieties and a very general member of the remaining $7$ families is not stably rational $($see \emph{Table \ref{qswh}}$)$.
\end{enumerate} 
\end{Thm}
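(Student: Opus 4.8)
\emph{Strategy and reduction of the non-rationality assertions.}
For the non-rationality statements --- part (1) for every index $1$ family other than the quartic threefold \cite{CP} and the sextic double solid $X_6\subset\mbP(1,1,1,1,3)$ \cite{Beauville}; part (2) for every index $2$ family other than the cubic threefold, the quartic double solid $X_4\subset\mbP(1,1,1,1,2)$ \cite{Voisin} and $X_6\subset\mbP(1,1,1,2,3)$ \cite{HT}; and part (3) for the seven families of Table~\ref{qswh} --- the plan is to run the specialization argument for universal $\CH_0$-triviality of Colliot-Th\'el\`ene and Pirutka together with Koll\'ar's reduction modulo $p$, in the way used by Totaro \cite{Totaro}. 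Recall that a smooth projective stably rational variety is universally $\CH_0$-trivial, that a smooth projective $Y$ with $H^0(Y,\Omega^i_Y)\neq 0$ for some $i\ge 1$ is not universally $\CH_0$-trivial, and that, by the specialization theorem, if some member $X_0$ of the family over $\overline{\mbF}_p$ admits a universally $\CH_0$-trivial resolution $\pi\colon Y\to X_0$ with $Y$ not universally $\CH_0$-trivial, then the very general complex member is not universally $\CH_0$-trivial, hence not stably rational. So for each of these families it is enough to produce one well formed quasismooth member $X_0$ over $\overline{\mbF}_p$, for a suitable prime $p$, whose general deformation is the very general member, together with a resolution $\pi\colon Y\to X_0$ such that (a) $\pi$ is universally $\CH_0$-trivial, and (b) $H^0(Y,\Omega^3_Y)\neq 0$.

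\emph{The degenerate member, and verification of (a) and (b).}
For each family one chooses a variable $x_j$ of one of the largest weights, takes $p$ to be a prime dividing the degree of the projection of a member away from $x_j$, and degenerates the defining polynomial, over $\overline{\mbF}_p$, to a normal form (of shape $x_j^{\,p}-f\,x_j-g$ in the simplest cases) for which this projection exhibits $X_0$ as a generically finite cover of the weighted projective $3$-space $\mbP(a_0,\dots,\widehat{a_j},\dots,a_4)$ of degree divisible by $p$ --- hence inseparable, or separable but wildly ramified, in characteristic $p$. The point of passing to characteristic $p$ is that the ramification of this cover contributes enough to the different that the canonical class $\Omega^3_Y=\omega_Y$ becomes effective; this gives (b), and is a direct computation once $f$, $g$ and the family are fixed. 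For (a): a general member has only isolated cyclic quotient terminal singularities, whose minimal resolutions have exceptional fibres assembled from toric surfaces and are therefore universally $\CH_0$-trivial; one checks that the chosen $X_0$ has no worse singularities, or --- when the normal form must be modified, in particular at the finitely many small primes where it breaks down --- that the extra singularities still admit a resolution all of whose fibres are universally $\CH_0$-trivial. The choices of $x_j$, of $p$, and of the degenerate equation are recorded family by family.

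\emph{Rationality of the twenty families.}
For the remaining twenty families of index $>2$ one exhibits an explicit birational map to $\mbP^3$, coming either from the linear projection from a cyclic quotient singular point of a general member --- which presents the threefold birationally as a conic bundle, a quadric bundle, or a del Pezzo fibration over $\mbP^1$ or $\mbP^2$ with manifestly rational total space --- or from a toric or projective-bundle structure visible after a small birational modification; several of these constructions are already in \cite{Okada1}, and the rest are routine.

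\emph{Main obstacle.}
The crux is the tension between (a) and (b): forcing $H^0(Y,\Omega^3_Y)\neq 0$ wants $X_0$ as degenerate as possible in characteristic $p$, whereas universal $\CH_0$-triviality of the resolution demands its singularities stay mild, and the two must be reconciled --- while keeping $X_0$ quasismooth and well formed inside the given family, and while covering the sporadic small primes where the uniform normal form fails --- for each of the roughly one hundred families at stake; this case-by-case analysis is the bulk of the argument. Finally, the cubic threefold genuinely lies outside the reach of the method: it carries no nonzero global differential form in any characteristic and no known unramified-cohomological obstruction, its irrationality resting instead on the intermediate Jacobian \cite{CG}; this is the source of the ``except possibly'' clause in part (2).
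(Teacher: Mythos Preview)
Your overall strategy---specialization of universal $\CH_0$-triviality plus reduction modulo $p$, in the spirit of Totaro---is the paper's strategy, but the obstruction you propose is the wrong one and the argument as written cannot succeed. You aim for $H^0(Y,\Omega^3_Y)=H^0(Y,\omega_Y)\neq 0$, claiming that ramification of the cover ``contributes enough to the different that the canonical class becomes effective.'' But $X_0$ is a hypersurface of degree $d$ in $\mbP(a_0,\dots,a_4)$ with $d<\sum a_i$ (this is exactly the Fano condition), so $\omega_{X_0}\cong\mcO_{X_0}(d-\sum a_i)$ has no sections; for a resolution with mild enough singularities to satisfy your condition (a), one still has $H^0(Y,\omega_Y)=0$. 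The paper uses $\Omega^2_Y$, not $\Omega^3_Y$. The degenerate member is not of Artin--Schreier shape $x_j^{\,p}-fx_j-g$ but is a \emph{purely inseparable} cover: one writes the equation as $w^m+f(x_0,\dots,x_3)=0$ (Type~I) or $w^mx_k+f=0$ (Type~II) with $p\mid m$, and Koll\'ar's construction \cite[V.5]{Kollar} produces an invertible subsheaf $\pi^*(\omega_Z\otimes\mcL^m)\hookrightarrow(\Omega^2_{X_0})^{\vee\vee}$, which is $\mcO_{X_0}(d-\sum_{i\neq j}a_i)$ and hence effective precisely because one weight has been removed from the sum.

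The genuine work, which your sketch underestimates, is then to resolve $X_0$ so that this reflexive $2$-form lifts to a \emph{regular} $2$-form on $Y$ while keeping the resolution universally $\CH_0$-trivial. This requires (i) quasi-smoothness criteria in positive characteristic along the coordinate strata of weight $>1$ (Section~\ref{sec:qsm}), (ii) showing the branch section has only \emph{admissible} critical points on the smooth locus of the base (Section~\ref{sec:critical}), so that the non-quotient singularities of $X_0$ admit resolutions lifting the form \cite{Okada3}, and (iii) a lifting lemma for differential forms under toric resolutions of the residual cyclic quotient points (Lemma~\ref{lem:toriclift}). The $102$ families are then organised into Type~I and Type~II according to whether $d=mb$ or $d=mb+a_k$ for the distinguished weight $b$, with three sporadic families (No.~19, 103, 122) handled separately; the verification is systematic rather than a matter of ``sporadic small primes.'' Finally, your rationality argument for the twenty families is more elaborate than necessary: the paper simply observes (Lemma~\ref{ratcri}) that $d<2a_4$ or $d=2a_4=2a_3$ makes the defining equation linear in the top-weight variable after a coordinate change, so projection from the corresponding vertex is birational onto $\mbP(a_0,\dots,a_3)$.
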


\begin{table}[t] \label{qswh} 
\caption{(Stable) Rationality of orbifold Fano $3$-folds of index $> 1$: 
In the column ``Rat", the signs $+$, $-$ and $--$ mean that a very general member is rational, not rational and not stably rational, respectively.
The column ``Ind" indicates the index of members of the family.}
\begin{tabular}[]{cccc|cccc}
\hline
No. & $X_d \subset \mbP (a_0,\dots,a_4)$ & Rat & Ind & No. & $X_d \subset \mbP (a_0,\dots,a_4)$ & Rat & Ind \\
\hline
96 & $X_3 \subset \mbP (1,1,1,1,1)$ & $-$ & 2 & 113 & $X_4 \subset \mbP (1,1,2,2,3)$ & $+$ & 5 \\[0.2mm]
97 & $X_4 \subset \mbP (1,1,1,1,2)$ & $--$ & 2 & 114 & $X_6 \subset \mbP (1,1,2,3,4)$ & $+$ & 5 \\[0.2mm]
98 & $X_6 \subset \mbP (1,1,1,2,3)$ & $--$ & 2 & 115 & $X_6 \subset \mbP (1,2,2,3,3)$ & $+$ & 5 \\[0.2mm]
99 & $X_{10} \subset \mbP (1,1,2,3,5)$ & $--$ & 2 & 116 & $X_{10} \subset \mbP (1,2,3,4,5)$ & $--$ & 5 \\[0.2mm]
100 & $X_{18} \subset \mbP (1,2,3,5,9)$ & $--$ & 2 & 117 & $X_{15} \subset \mbP (1,3,4,5,7)$ & $--$ & 5 \\[0.2mm] 
101 & $X_{22} \subset \mbP (1,2,3,7,11)$ & $--$ & 2 & 118 & $X_6 \subset \mbP (1,1,2,3,5)$ & $+$ & 6 \\[0.2mm]
102 & $X_{26} \subset \mbP (1,2,5,7,13)$ & $--$ & 2 & 119 & $X_6 \subset \mbP (1,2,2,3,5)$ & $+$ & 7 \\[0.2mm]
103 & $X_{38} \subset \mbP (2,3,5,11,19)$ & $--$ & 2 & 120 & $X_6 \subset \mbP (1,2,3,3,4)$ & $+$ & 7 \\[0.2mm]
104 & $X_2 \subset \mbP (1,1,1,1,1)$ & $+$ & 3 & 121 & $X_8 \subset \mbP (1,2,3,4,5)$ & $+$ & 7 \\[0.2mm]
105 & $X_3 \subset \mbP (1,1,1,1,2)$ & $+$ & 3 & 122 & $X_{14} \subset \mbP (2,3,4,5,7)$ & $--$ & 7 \\[0.2mm]
106 & $X_4 \subset \mbP (1,1,1,2,2)$ & $+$ & 3 & 123 & $X_6 \subset \mbP (1,2,3,3,5)$ & $+$ & 8 \\[0.2mm]
107 & $X_6 \subset \mbP (1,1,2,2,3)$ & $--$ & 3 & 124 & $X_{10} \subset \mbP (1,2,3,5,7)$ & $+$ & 8 \\[0.2mm]
108 & $X_{12} \subset \mbP (1,2,3,4,5)$ & $--$ & 3 & 125 & $X_{12} \subset \mbP (1,3,4,5,7)$ & $+$ & 8 \\[0.2mm]
109 & $X_{15} \subset \mbP (1,2,3,5,7)$ & $--$ & 3 & 126 & $X_6 \subset \mbP (1,2,3,4,5)$ & $+$ & 9 \\[0.2mm]
110 & $X_{21} \subset \mbP (1,3,5,7,8)$ & $--$ & 3 & 127 & $X_{12} \subset \mbP (2,3,4,5,7)$ & $+$ & 9 \\[0.2mm]
111 & $X_4 \subset \mbP (1,1,1,2,3)$ & $+$ & 4 & 128 & $X_{12} \subset \mbP (1,4,5,6,7)$ & $+$ & 11 \\[0.2mm]
112 & $X_6 \subset \mbP (1,1,2,3,3)$ & $+$ & 4 & 129 & $X_{10} \subset \mbP (2,3,4,5,7)$ & $+$ & 11 \\[0.2mm]
& & & & 130 & $X_{12} \subset \mbP (3,4,5,6,7)$ & $+$ & 13 \\
\end{tabular}
\end{table}

We can re-state Theorem \ref{mainthm1} in the following way, which gives a simple characterization of (stable) rationality of very general orbifold Fano $3$-fold hypersurfaces in terms of weights of the ambient space and the degree of the hypersurface.

\begin{Thm} \label{mainthm2}
Let $X = X_d \subset \mbP (a_0,\dots,a_4)$, $a_0 \le \cdots \le a_4$, be a very general orbifold Fano $3$-fold hypersurface of degree $d$ defined over the complex number field.
Then the following are equivalent.
\begin{enumerate}
\item Either $d < 2 a_4$ or $d = 2 a_4 = 2 a_3$.
\item $X$ is rational.
\end{enumerate}
Moreover, if $X$ is not a cubic $3$-fold, then the above $2$ conditions are equivalent to the following.
\begin{enumerate}
\item[(3)] $X$ is stably rational.
\end{enumerate}
\end{Thm}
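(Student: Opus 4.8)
The plan is to obtain the equivalences as a family-by-family translation of Theorem~\ref{mainthm1}: condition~(1) is precisely the numerical shadow of the geometric trichotomy (rational / stable rationality open / not stably rational) recorded there and in Table~\ref{qswh}. All the hard input --- rationality of the $20$ families, stable non-rationality of the remaining ones, and non-rationality of the cubic threefold \cite{CG} --- is already available, so only a bookkeeping step is left.

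\textbf{Index $1$ and $2$.} Here I would first show that condition~(1) never holds. Let $I = a_0 + \cdots + a_4 - d \in \{1, 2\}$ be the Fano index, and recall $a_i \ge 1$. If $d = 2a_4 = 2a_3$, then $a_3 = a_4$, so $I = a_0 + a_1 + a_2 \ge 3$, impossible. If $d < 2a_4$, then quasi-smoothness at the coordinate vertex $P_4 = [0:\cdots:0:1]$ forces the defining equation to contain either $x_4^{d/a_4}$ --- which, as $d < 2a_4$, means $d = a_4$ and hence $I = a_0 + a_1 + a_2 + a_3 \ge 4$ --- or a monomial $x_i x_4^{(d - a_i)/a_4}$ with $i < 4$, where $d < 2a_4$ forces the exponent to equal $1$, so $d = a_i + a_4$ and $I = \sum_{j \ne i,\, 4} a_j \ge 3$; either way $I \ge 3$, a contradiction. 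Thus condition~(1) fails for every family of index $1$ or $2$. On the other hand, Theorem~\ref{mainthm1}~(1)--(2) says such an $X$ is not stably rational, with the sole exception of the cubic threefold, which is still not rational by \cite{CG}. Hence condition~(2) also fails, and condition~(3) fails whenever $X$ is not a cubic threefold. This gives $(1) \Leftrightarrow (2)$ for all $X$ of index $\le 2$, and $(1) \Leftrightarrow (3)$ for all such $X$ other than the cubic threefold.

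\textbf{Index $\ge 3$.} For each of the $27$ families of index greater than $2$ --- Nos.~$104$--$130$ of Table~\ref{qswh}, drawn from the classification in \cite{Fletcher}, \cite{BS1}, \cite{BS2} --- I would read off condition~(1) directly from the weights: $d < 2a_4$ holds for Nos.~$105$, $111$, $113$, $114$, $118$--$121$, $123$--$130$; $d = 2a_4 = 2a_3$ holds for Nos.~$104$, $106$, $112$, $115$; and both fail for the remaining seven families Nos.~$107$, $108$, $109$, $110$, $116$, $117$, $122$. Comparing with the ``Rat'' column of Table~\ref{qswh}, condition~(1) holds exactly for the $20$ families marked ``$+$'' and fails exactly for the $7$ marked ``$--$''. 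By Theorem~\ref{mainthm1}~(3) the former consist of rational varieties, while very general members of the latter are not stably rational, hence not rational; and since none of these families is the cubic threefold, a rational member here is automatically stably rational. Therefore (1), (2) and (3) are simultaneously true or simultaneously false in each such family, which yields all three equivalences when the index is $\ge 3$.

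\textbf{Where the care is needed.} There is no genuine obstacle; the argument is pure bookkeeping. The two points to get right are the uniform numerical argument for index $\le 2$ (so that one does not inspect the $95 + 8$ families one at a time) and the treatment of the cubic threefold, the unique case in which (3) is not asserted: one must still check --- as above --- that it is consistent with $(1) \Leftrightarrow (2)$, both conditions being false for it.
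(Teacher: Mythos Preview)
Your proof is correct and follows the same route as the paper: both deduce Theorem~\ref{mainthm2} from Theorem~\ref{mainthm1} (equivalently, from Propositions~\ref{prop:rat}, \ref{prop:typeIcheckT}, \ref{prop:typeIIcheckT}, \ref{prop:No103checkT}, \ref{prop:No122checkT}, \ref{prop:No19checkT}) together with \cite{CG} for the cubic. Your numerical argument via quasi-smoothness at the vertex $P_4$, showing that condition~(1) can never hold when the index is $\le 2$, is a clean self-contained addition that the paper leaves implicit in the classification tables.
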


Note that the implication $(1) \Rightarrow (2)$ is easy (see Section \ref{sec:rat}) and $(2) \Rightarrow (3)$ is trivial.
The main result of this paper is to prove the implication $(3) \Rightarrow (1)$.

The implication $(1) \Rightarrow (2)$ holds true in any dimension: for a general orbifold Fano hypersurface $X = X_d \subset \mbP (a_0,\dots,a_{n+1})$, $a_0 \le \dots \le a_{n+1}$, of degree $d$, if either $d < 2 a_{n+1}$ or $d = 2 a_{n+1} = 2 a_n$, then $X$ is rational.
The following question arises naturally.

\begin{Question}
Let $X = X_d \subset \mbP (a_0,\dots,a_{n+1})$, $a_0 \le \cdots \le a_{n+1}$, be a very general orbifold Fano hypersurface of degree $d$ and dimension $n \ge 3$.
Is there any $X$ which is rational but satisfies neither $d < 2 a_{n+1}$ nor $d = 2 a_n = 2 a_{n+1}$?
Moreover is there any $X$ which is stably rational but not rational?
\end{Question}

We explain a rough sketch of the proof of main theorems and then the organization of the paper.
To each family of orbifold Fano $3$-fold hypersurfaces which do not satisfy (1) of Theorem \ref{mainthm2}, we construct a subfamily  whose members admit a cyclic covering structure over a weighted hypersurface.
We then consider the subfamily over an algebraically closed field of characteristic $p$, where $p$ is a prime number dividing the covering degree, so that a member $X$ of the subfamily is an inseparable covering of a weighted hypersurface.
By the Koll\'ar's argument, we can prove that there exists a non-zero global differential $2$-form $\eta$ on $X$ (which is regular on the smooth locus of $X$).
The next task is to construct a resolution $\varphi \colon Y \to X$ of singularities of $X$ satisfying good properties.
Here good properties mean that $\varphi$ is universally $\CH_0$-trivial and $\varphi^* \eta$ is a regular form on $Y$.
The latter implies that $Y$ is not universally $\CH_0$-trivial.
Now we lift $X$ to an orbifold Fano $3$-fold hypersurface over $\mbC$, and, by the specialization property of universal $\CH_0$-triviality, we can conclude that a very general member of the considered family is not stably rational.

In Section \ref{sec:framework}, we explain in detail that the existence of the above mentioned subfamily indeed implies the stable non-rationality of a very general member of the family.
In Section \ref{sec:resol}, we consider weighted hypersurfaces $X$ admitting an inseparable cyclic covering structure over a weighted hypersurface $Z$ and give a condition for $X$ to admit a resolution of singularities $\varphi \colon Y \to X$ satisfying good properties.
The most important condition is the mildness of singularities of $X$, and thus we need to control them.
We study singularities of $X$ in terms of quasi-smoothness of $X$ along a suitable stratum of the ambient space and in terms of critical points of the section defining the branched divisor of the covering $X \to Z$.
In Section \ref{sec:qsm}, we give quasi-smoothness criteria for weighted hypersurfaces in positive characteristic and in Section \ref{sec:critical} we give a criterion for a suitable section on a weighted hypersurface to have only mild critical points.
In Section \ref{sec:rat} we consider rationality of orbifold Fano $3$-fold hypersurfaces.
In Section \ref{sec:stnonrat} we apply criteria in Sections \ref{sec:qsm} and \ref{sec:critical} for orbifold Fano $3$-fold hypersurfaces and show that the condition given in Section \ref{sec:resol} is satisfied, which will complete the proof of stable non-rationality by the result of Section \ref{sec:framework}.
Finally, in Section \ref{sec:stnonrat}, we exhibit an example of a stably non-rational orbifold Fano $3$-fold hypersurface obtained in this paper and show that the rationality criterion \cite[Theorem 1.8]{BMSZ} in terms of absolute complexity is sharp.

\begin{Ack}
The author would like to thank Professor Ivan Cheltsov for having interest on this work.
The author is partially supported by JSPS KAKENHI Grant Number 26800019.
\end{Ack}

\section{Preliminaries} \label{sec:framework}

\subsection{Universal $\CH_0$-triviality}

We explain the definition and basic properties of universally $\CH_0$-triviality.
For a variety $X$, we denote by $\CH_0 (X)$ the Chow group of $0$-cycles on $X$, which is by definition the free abelian group of $0$-cycles modulo rational equivalence.

\begin{Def}
\begin{enumerate}
\item A projective variety $X$ defined over a field $k$ is {\it universally $\CH_0$-trivial} if for any field $F$ containing $k$, the degree map $\CH_0 (X_F) \to \mbZ$ is an isomorphism.
\item A projective morphism $\varphi \colon Y \to X$ defined over a field $k$ is {\it universally $\CH_0$-trivial} if for any field $F$ containing $k$, the push-forward map $\varphi_* \colon \CH_0 (Y_F) \to \CH_0 (X_F)$ is an isomorphism.
\end{enumerate}
\end{Def}

We apply the following specialization arguments to orbifold Fano $3$-fold hypersurfaces.

\begin{Lem}[{\cite[Lemma 1.5]{CTP}}]
If $X$ is a smooth, projective, stably rational variety, then $X$ is universally $\CH_0$-trivial.
\end{Lem}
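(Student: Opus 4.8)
The plan is to reduce the assertion to two standard facts about Chow groups of $0$-cycles together with a short formal argument. First I would record the computation that for any variety $W$ over a field and any $m \ge 0$ the projection $p \colon W \times \mbP^m \to W$ induces an isomorphism $p_* \colon \CH_0(W \times \mbP^m) \to \CH_0(W)$ compatible with the degree maps: a constant section $s \colon w \mapsto (w, y_0)$ attached to a rational point $y_0 \in \mbP^m$ gives $p_* s_* = \mathrm{id}$, and conversely any closed point of $W \times \mbP^m$ lies on some fibre $\{w\} \times \mbP^m$, whose $\CH_0$ is $\mbZ$, hence it is rationally equivalent on that fibre — a fortiori on $W \times \mbP^m$ — to $s(w)$; this gives $s_* p_* = \mathrm{id}$ on $\CH_0$. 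The same holds after any field extension.

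Next I would invoke the classical fact that $\CH_0$ is a birational invariant of smooth projective varieties over a field. Given a birational map $W \ratmap W'$ of smooth projective varieties, I would resolve its indeterminacy by a smooth projective $\widetilde{W}$ admitting projective birational morphisms to both $W$ and $W'$, and then use that a projective birational morphism between smooth projective varieties induces an isomorphism on $\CH_0$ (surjectivity because a $0$-cycle can be moved into the open locus of isomorphism; injectivity via the moving lemma on the smooth source together with the localization sequence for Chow groups), obtaining $\CH_0(W) \cong \CH_0(\widetilde{W}) \cong \CH_0(W')$ compatibly with degrees.

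Granting these, the conclusion is immediate. Since $X$ is stably rational there is an $m \ge 0$ with $X \times \mbP^m$ birational over $k$ to $\mbP^N$, where $N = \dim X + m$. For an arbitrary field $F \supseteq k$ the base change $X_F \times \mbP^m_F$ is smooth projective and birational to $\mbP^N_F$, so $\CH_0(X_F \times \mbP^m_F) \cong \CH_0(\mbP^N_F) = \mbZ$ compatibly with degree; composing with the isomorphism $p_*$ from the first step shows that the degree map $\CH_0(X_F) \to \mbZ$ is an isomorphism. As $F$ is arbitrary, $X$ is universally $\CH_0$-trivial.

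The genuinely nontrivial ingredient — and therefore the main obstacle — is the birational invariance of $\CH_0$ for smooth projective varieties: concretely, the resolution of indeterminacy of the birational map and the injectivity of $\pi_*$ for a projective birational morphism of smooth varieties. In characteristic zero this rests on resolution of singularities; in general one argues as in \cite{CTP}. Everything else in the proof is formal.
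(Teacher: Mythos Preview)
The paper does not supply its own proof of this lemma; it is simply quoted from \cite[Lemma~1.5]{CTP}. Your sketch is the standard argument and is correct in outline.

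One point deserves comment. Your birational-invariance step passes through a \emph{smooth} projective $\widetilde{W}$ dominating both $W$ and $W'$, which requires resolution of singularities and is therefore not available in positive characteristic; you note this yourself at the end. Since the paper applies universal $\CH_0$-triviality over fields of arbitrary characteristic (the specialization in Theorem~\ref{thm:sp} lands in characteristic $p$), the characteristic-free version is actually what is needed. The usual fix, which is presumably what \cite{CTP} does, is to bypass resolution entirely and argue via correspondences: take the closure $\Gamma \subset W \times W'$ of the graph of the birational map and observe that $[\Gamma^t]\circ[\Gamma]$ differs from the diagonal of $W$ by a cycle supported on $Z \times W$ with $Z \subsetneq W$ closed; such a correspondence kills $\CH_0$ of the smooth projective $W$ by the moving lemma, and symmetrically for the other composition. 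This gives $\CH_0(W)\cong\CH_0(W')$ compatibly with degree over any field, with no appeal to resolution. With that substitution your proof is complete and self-contained.
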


\begin{Thm}[{\cite[Th\'eor\`eme 1.14]{CTP}}] \label{thm:sp}
Let $A$ be a discrete valuation ring with fraction field $K$ and residue field $k$, with $k$ algebraically closed.
Let $\mcX$ be a flat proper scheme over $A$ with geometrically integral fibers.
Let $X$ be the generic fiber $\mcX \times_A K$ and $Y$ the special fiber $\mcX \times_A k$.
Assume that $Y$ admits a universally $\CH_0$-trivial resolution $\tilde{Y} \to Y$ of singularities.
Let $\overline{K}$ be an algebraic closure of $K$ and assume that the geometric generic fiber $X_{\overline{K}}$ admits a universally $\CH_0$-trivial resolution $\tilde{X} \to X_{\overline{K}}$.
If $\tilde{X}$ is universally $\CH_0$-trivial, then so is $\tilde{Y}$.
\end{Thm}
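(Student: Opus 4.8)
The plan is to produce a decomposition of the diagonal on $\tilde{Y}$ by degenerating one on $\tilde{X}$, following Voisin's degeneration method as refined by Colliot-Th\'el\`ene--Pirutka. Write $d$ for the common dimension of the fibres of $\mcX / A$. I will use throughout the following criterion: a smooth proper geometrically integral variety $V$ over a field $F$ is universally $\CH_0$-trivial if and only if it carries a decomposition of the diagonal
\[
[\Delta_V] = [V \times_F v_0] + [\Gamma] \in \CH_{\dim V}(V \times_F V),
\]
where $v_0$ is a zero-cycle of degree $1$ on $V$ and $\Gamma$ is supported on $D \times_F V$ for some closed subset $D \subsetneq V$; here $(\Rightarrow)$ is obtained by restricting $[\Delta_V] - [V \times_F v_0]$ to the generic fibre of the first projection and using the localization sequence, and $(\Leftarrow)$ uses the moving lemma for zero-cycles on the smooth $V$. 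Moreover, if a proper variety admits a universally $\CH_0$-trivial resolution then it is universally $\CH_0$-trivial exactly when the resolution is. Hence $\tilde{X}$ carries such a decomposition over $\overline{K}$, and, since $\tilde{Y} \to Y$ is universally $\CH_0$-trivial, it suffices to prove that $Y$ is universally $\CH_0$-trivial.

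First I descend the relevant data to a finite extension. The variety $\tilde{X}$, the morphism $\tilde{X} \to X_{\overline{K}}$, the cycles $v_0$ and $\Gamma$, the closed set $D$, and the finitely many rational functions realizing the diagonal decomposition, are all of finite type, hence defined over a finite subextension $K \subset K' \subset \overline{K}$; enlarging $K'$ we may assume $\tilde{X}_{K'} \to X_{K'} := X \times_K K'$ is a resolution over $K'$ and that the decomposition holds in $\CH_d(\tilde{X}_{K'} \times_{K'} \tilde{X}_{K'})$. Let $A'$ be the localization of the integral closure of $A$ in $K'$ at a maximal ideal over the one of $A$; it is a discrete valuation ring with fraction field $K'$, and its residue field, being finite over the algebraically closed $k$, equals $k$. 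So $\mcX' := \mcX \times_A A'$ is flat and proper over $A'$ with generic fibre $X_{K'}$ and special fibre $Y$. Pushing the diagonal decomposition of $\tilde{X}_{K'}$ forward along the birational morphism $\tilde{X}_{K'} \times_{K'} \tilde{X}_{K'} \to X_{K'} \times_{K'} X_{K'}$ sends the three summands respectively to the diagonal, to $[X_{K'} \times_{K'} x_0]$ with $x_0$ a zero-cycle of degree $1$, and to a cycle $\Gamma'$ supported on $D' \times_{K'} X_{K'}$ where $D'$ is the image of $D$, of dimension $\le d - 1$; thus we get a decomposition of the diagonal of $X_{K'}$.

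Next I specialize, working on the flat proper $A'$-scheme $\mcW := \mcX' \times_{A'} \mcX'$, whose generic and special fibres are $X_{K'} \times_{K'} X_{K'}$ and $Y \times_k Y$. I apply the specialization homomorphism on Chow groups (Fulton, \emph{Intersection Theory}, \S20.3): take the Zariski closure in $\mcW$ of a cycle on the generic fibre --- automatically flat over $A'$ --- and restrict it to the special fibre (a Cartier divisor); this respects rational equivalence, proper pushforward and degree. The closure of $\Delta_{X_{K'}}$ is the diagonal of $\mcX'$, restricting to $\Delta_Y$; the closure of $X_{K'} \times_{K'} x_0$ is $\mcX' \times_{A'} \overline{x_0}$ with $\overline{x_0}$ finite flat over $A'$, restricting to $Y \times_k y_0$ with $y_0$ a zero-cycle of degree $1$ on $Y$; and the closure of $\Gamma'$ is supported on $\overline{D'} \times_{A'} \mcX'$, where $\overline{D'}$ is flat over $A'$ of relative dimension $\le d - 1$, restricting to a cycle supported on $D_Y \times_k Y$ with $D_Y := (\overline{D'})_k$ of dimension $\le d - 1$. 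Since $Y$ is geometrically integral of dimension $d$ we have $D_Y \subsetneq Y$, so we obtain the decomposition of the diagonal of $Y$
\[
[\Delta_Y] = [Y \times_k y_0] + [\Gamma'_k] \in \CH_d(Y \times_k Y), \qquad \Supp \Gamma'_k \subset D_Y \times_k Y .
\]
Transporting this through the universally $\CH_0$-trivial resolution $\tilde{Y} \to Y$ gives a decomposition of the diagonal of the smooth $\tilde{Y}$ with error term supported on $\tilde{D} \times_k \tilde{Y}$ for a proper closed $\tilde{D} \subsetneq \tilde{Y}$; by the criterion of the first paragraph --- the moving lemma now applying, $\tilde{Y}$ being smooth over the infinite field $k$ --- $\tilde{Y}$ is universally $\CH_0$-trivial.

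The main obstacle is the specialization step and the passage to the resolution: one must check that closing up each piece of the diagonal decomposition in $\mcW$ and restricting it to $Y \times_k Y$ really preserves the relation and, above all, keeps the error term supported on a \emph{proper} closed subset of $Y$ --- this is exactly where the flatness and properness of $\mcX / A$ and the geometric integrality of $Y$ are needed --- and then that this decomposition can be pulled back through the (in general non-flat) resolution $\tilde{Y} \to Y$ while keeping the shape of the error term, for which the hypothesis that $Y$ admits a universally $\CH_0$-trivial resolution is indispensable.
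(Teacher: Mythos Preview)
The paper does not give its own proof of this statement; it is quoted from Colliot-Th\'el\`ene--Pirutka \cite[Th\'eor\`eme~1.14]{CTP} and used as a black box. Your argument is precisely the one in that reference --- produce a decomposition of the diagonal on $\tilde{X}$, push it down to $X_{K'}$, specialize it to $Y$ via Fulton's specialization map for Chow groups, and then pass to the smooth model $\tilde{Y}$ --- so there is nothing in the present paper to compare it against.

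One remark on the write-up. The sentence ``transporting this through the universally $\CH_0$-trivial resolution $\tilde{Y}\to Y$ gives a decomposition of the diagonal of the smooth $\tilde{Y}$'' is the only step you do not actually carry out, and it cannot be done by pulling back cycles along the (non-flat) resolution. The clean way is to use the generic-point reformulation: restricting the decomposition on $Y\times_k Y$ over the generic point of the first factor yields $[\eta_Y]=[(y_0)_{k(Y)}]$ in $\CH_0(Y_{k(Y)})$. Since $f\colon\tilde{Y}\to Y$ is birational one has $k(\tilde{Y})=k(Y)$, and universal $\CH_0$-triviality of $f$ says that $f_*\colon\CH_0(\tilde{Y}_F)\to\CH_0(Y_F)$ is an isomorphism for every field $F\supset k$. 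Applying this with $F=k$ gives a degree-$1$ zero-cycle $\tilde{y}_0$ on $\tilde{Y}$ with $f_*\tilde{y}_0=y_0$; applying it with $F=k(Y)$ and using $f_*[\eta_{\tilde{Y}}]=[\eta_Y]$ then forces $[\eta_{\tilde{Y}}]=[(\tilde{y}_0)_{k(\tilde{Y})}]$ in $\CH_0(\tilde{Y}_{k(\tilde{Y})})$, which is exactly the decomposition of the diagonal for the smooth $\tilde{Y}$. With this step made explicit your proof is complete and coincides with the original one in \cite{CTP}.
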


The following is a sufficient condition for universally $\CH_0$-non-triviality.

\begin{Lem}[{\cite[Lemma 2.2]{Totaro}}] \label{lem:totaro}
Let $X$ be a smooth projective variety over a field $k$.
If $H^0 (X,\Omega_X^i) \ne 0$ for some $i > 0$, then $X$ is not universally $\CH_0$-trivial.
\end{Lem}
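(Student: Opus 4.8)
The plan is to prove the contrapositive: a smooth projective, universally $\CH_0$-trivial variety $X$ satisfies $H^0(X,\Omega_X^i)=0$ for every $i>0$. The method is a \emph{decomposition of the diagonal} combined with the action of algebraic correspondences on coherent cohomology, following Bloch--Srinivas. Write $n=\dim X$. First I would produce the decomposition. Taking $F=k$ in the definition of universal $\CH_0$-triviality, $X$ carries a $0$-cycle $z_0$ of degree $1$; taking $F=k(X)$, the class $\delta_X\in\CH_0(X_{k(X)})$ of the generic point has degree $1$, hence equals $(z_0)_{k(X)}$ by injectivity of the degree map. Viewing $X_{k(X)}$ as the generic fibre of the second projection $X\times X\to X$ and spreading out this rational equivalence, there is a dense open $U\subseteq X$ over which $\Delta_X$ and $z_0\times X$ agree, and the localization sequence for $D:=X\setminus U$ gives, in $\CH_n(X\times X)$,
$$
\Delta_X \;=\; z_0\times X \;+\; Z,\qquad Z\text{ supported on }X\times D,
$$
for a proper closed subset $D\subsetneq X$. (This reduction is by now standard; cf.~\cite{CTP}.)

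Next I would let these cycles act as self-correspondences on each group $H^q(X,\Omega_X^p)$ --- an action available over any base field --- with $\Delta_X$ acting as the identity. Fix $i>0$ and $\omega\in H^0(X,\Omega_X^i)$. The correspondence $z_0\times X$ acts by $\omega\mapsto p_{2*}\,p_1^*\!\big(\omega\cup[z_0]\big)$, where $[z_0]\in H^n(X,\Omega_X^n)$ is the cycle class; this vanishes because $\omega\cup[z_0]\in H^n(X,\Omega_X^{n+i})=0$. Hence
$$
\omega \;=\; (\Delta_X)_*\omega \;=\; (z_0\times X)_*\omega + Z_*\omega \;=\; Z_*\omega,
$$
and the whole problem reduces to showing $Z_*\omega=0$.

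Showing $Z_*\omega=0$ is the crux. Because $Z$ is supported on $X\times D$ with $c:=\codim_X D\ge 1$, the cycle class of $Z$ has Künneth components $\sum_a\alpha_a\otimes\beta_a$ in which each $\beta_a$ is a Gysin image from a resolution $\widetilde D\to D$; and Gysin pushforward along a morphism of codimension $c$ raises both the form degree and the cohomological degree by $c\ge 1$. Pairing $\omega$ against the $\alpha_a$ by Serre duality, a nonzero contribution would force $\beta_a\in H^0(X,\Omega_X^i)$, a group of cohomological degree $0$; but a Gysin image lies in cohomological degree $\ge c\ge 1$. So every term vanishes, $Z_*\omega=0$, hence $\omega=0$, as desired. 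Over $\mbC$ this last step is exactly the Bloch--Srinivas observation that a cohomology class supported in codimension $\ge 1$ has Hodge coniveau $\ge 1$, hence no $(i,0)$-component; in positive characteristic one runs the same argument purely within Hodge cohomology. The main obstacle is concentrated in this step: it is \emph{not} a dimension count --- if $D$ is a divisor, $\widetilde D$ still carries nonzero global $i$-forms for all $i\le n-1$ --- and the real content is that classes of positive codimension cannot contribute to $H^0(\Omega^i)$ through a correspondence.
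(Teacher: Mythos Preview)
The paper does not give its own proof of this lemma; it is simply quoted from \cite{Totaro}. Your argument---produce an integral decomposition of the diagonal from universal $\CH_0$-triviality, then let the pieces act as correspondences on Hodge cohomology $H^q(X,\Omega_X^p)$ and observe that neither $z_0\times X$ nor a cycle supported on $X\times D$ can contribute to $H^0(X,\Omega_X^i)$---is exactly the Bloch--Srinivas method that Totaro uses in the cited reference, so you are aligned with the intended source.

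One technical wrinkle: you invoke a ``resolution $\widetilde D\to D$'' to set up the Gysin map, but resolution of singularities is not available over fields of positive characteristic in high dimension. This is not a genuine gap, only a phrasing issue. One can bypass it by working with the irreducible components of the $n$-cycle $Z$ and their smooth alterations (de Jong), or---closer to what Totaro actually does---by appealing directly to the formalism that Chow correspondences act on $\bigoplus_{p,q}H^q(X,\Omega_X^p)$ compatibly with cycle classes and the projection formula; the vanishing of $Z_*$ on $H^0(X,\Omega_X^i)$ then follows formally from the support condition without ever resolving $D$.
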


\subsection{Framework of proof}

Let $a_0,\dots,a_{n+1}, d$ be positive integers and $\mbP (a_0,\dots,a_{n+1})$ the weighted projective space with homogeneous coordinates $x_0,\dots,x_{n+1}$.
Let $\mcX \to \mbP_{\mbZ}^M$ be the family of weighted hypersurfaces of degree $d$ in $\mbP (a_0,\dots,a_{n+1})$ defined over $\mbZ$.
Here $\mbP^M_{\mbZ}$ parametrizes the polynomials of degree $d$ with coefficients in $\mbZ$ and in variables $x_0,\dots,x_{n+1}$.
For a field or a ring $R$, we denote by $\mcX_R \to \mbP^M_R$ the base change of $\mcX \to \mbP^M_{\mbZ}$, which the family of weighted hypersurfaces of degree $d$ in $\mbP (a_0,\dots,a_{n+1})$ defined over $R$.

Our aim is to construct a (locally closed) subspace $T \cong \mbA^N_{\mbZ}$ of $\mbP^M_{\mbZ}$, $0 < N \le M$,  satisfying the following condition.
For a field $k$, we define $T_k = T \times_{\Spec \mbZ} \Spec k\subset \mbP^M_k$.

\begin{Cond} \label{cd:T}
\begin{enumerate}
\item A general member of the subfamily parametrized by $T_{\mbC} \subset \mbP^M_{\mbC}$ is quasi-smooth and has only isolated cyclic quotient singularities.
\item There exists an algebraically closed field $\K$ of positive characteristic $p$ such that $T_{\K}^{\operatorname{indep}} \ne \emptyset$ (see Definition \ref{def:indep} below) and a very general member $X'$ of the subfamily parametrized by $T_{\K} \subset \mbP^M_{\K}$ has only isolated singular points and admits a resolution $\varphi' \colon Y' \to X'$ of singularities with the following properties:
\begin{enumerate}
\item[(a)] $\varphi'$ is an isomorphism over the smooth locus of $X'$ and the exceptional divisor of $\varphi'$ is a simple normal crossing (abbreviated as SNC) divisor whose components are smooth rational varieties.
\item[(b)] $H^0 (Y',\Omega_{Y'}^{n-1}) \ne 0$.
\end{enumerate}
\end{enumerate}
\end{Cond}

\begin{Def} \label{def:indep}
For a field $k$, we define $T_k^{\operatorname{indep}}$ to be the subset of $T_k = \mbA^N_k$ consisting of the point $(\alpha_1,\dots,\alpha_N) \in \mbA^N_k$ such that $\alpha_1, \dots,\alpha_N$ are algebraically independent over the prime field of $k$.
\end{Def}

\begin{Ex}
We explain by an example that what kind of $T$ we will consider.
Let us consider weighted hypersurfaces of degree $9$ in $\mbP = \mbP (1,1,1,3,4)$.
Let $x,y,z,w,t$ be the homogeneous coordinates of weight $1,1,1,3,4$, respectively.
The polynomials (up to a multiple of non-zero constant) in $x,y,z,w,t$ of degree $9$ with coefficients in $\mbZ$ can be parametrized by $\mbP^{102}_{\mbZ}$, so that we have a family $\mcX \to \mbP^{102}_{\mbZ}$ of weighted hypersurfaces of degree $9$ in $\mbP$ defined over $\mbZ$.
We consider the subfamily consisting of hypersurfaces defined by an equation of the form
\[
w^3 + f_9 (x,y,z,t) = 0.
\]
Let $T$ be the affine space parametrizing degree $9$ polynomials $f_9 (x,y,z,t)$ in variables $x,y,z,t$.
We see that $T \cong \mbA_{\mbZ}^{94}$ and we can naturally embed $T \inj \mbP^{102}_{\mbZ}$ so that the fiber of $\mcX \to \mbP^M$ over points of $T$ are hypersurfaces defined by $w^3 + f_9 = 0$.
The members parametrized by $T$ are cyclic covers of $\mbP (1,1,1,3)$ branched along a divisor of degree $9$.
\end{Ex}

The most crucial condition is (2) whose verifications for orbifold Fano $3$-fold hypersurfaces will be done in Section \ref{sec:stnonrat}.
In this section, we explain that the existence of $T$ satisfying Condition \ref{cd:T} implies that a very general member of $\mcX_{\mbC} \to \mbP^M_{\mbC}$ is not stably rational.
Although the arguments below may be well known to experts, we include them for readers' convenience. 

We keep the above setting and let $\mcY = \mcX \times_{\Spec \mbZ} T \to T$ be the subfamily of $\mcX \to \mbP^M_{\mbZ}$. 
Let $\K$ be an algebraically closed field of characteristic $p$ as in Condition \ref{cd:T}.(2).

\begin{Rem} \label{rem:sing}
\begin{enumerate}
\item By Condition \ref{cd:T}.(1), a general member of the family $\mcX_k \to \mbP^M_k$ is quasi-smooth and has only isolated cyclic quotient singularities for any algebraically closed field $k$ of characteristic $0$.
\item An isolated cyclic quotient singularity (defined over an algebraically closed field) admits a resolution of singularities whose exceptional divisor is a simple normal crossing divisor and each component is a nonsingular rational (toric) variety (see \cite[Theorem 11.2.2]{CLS}).
\item By \cite[Proposition 1.8]{CTP} and \cite[Lemma 2.4]{CTP2}, a resolution $\varphi' \colon Y' \to X'$ whose exceptional divisor is a SNC divisor with smooth rational components is universally $\CH_0$-trivial.
\end{enumerate}
\end{Rem}

%

Note that the set $T_k \setminus T_k^{\operatorname{indep}}$ is a countable union of divisors and hence $T_k^{\operatorname{indep}}$ is non-empty if $k$ is uncountable.

\begin{Lem} \label{lem:T1}
Let $X$ be a very general member of the family $\mcY_{\mbC} \to T_{\mbC}$.
Then there exists a universally $\CH_0$-trivial resolution $Y \to X$ of singularities such that $Y$ is not universally $\CH_0$-trivial.
\end{Lem}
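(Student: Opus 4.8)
The plan is to combine Condition \ref{cd:T}, the specialization theorem (Theorem \ref{thm:sp}), and a lifting argument from characteristic $p$ back to characteristic $0$. First I would fix the algebraically closed field $\K$ of characteristic $p$ and a very general member $X'$ of the subfamily parametrized by $T_{\K}$ that satisfies all the conclusions of Condition \ref{cd:T}.(2); in particular $X'$ has only isolated singularities and admits a resolution $\varphi' \colon Y' \to X'$ whose exceptional divisor is SNC with smooth rational components, so by Remark \ref{rem:sing}.(3) the morphism $\varphi'$ is universally $\CH_0$-trivial, and by Condition \ref{cd:T}.(2)(b) together with Lemma \ref{lem:totaro} the variety $Y'$ is \emph{not} universally $\CH_0$-trivial. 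Since $X'$ corresponds to a point $(\alpha_1,\dots,\alpha_N) \in T_{\K}^{\operatorname{indep}}$, the $\alpha_i$ are algebraically independent over $\mbF_p$; let $R_0 = \mbF_p[\alpha_1,\dots,\alpha_N] \cong \mbF_p[t_1,\dots,t_N]$ be the corresponding polynomial ring, so that $X'$ (and the whole resolution $Y' \to X'$, which is defined over a finitely generated extension) is defined over the fraction field of $R_0$, hence after inverting finitely many elements over a localization $R$ of $R_0$.

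Next I would construct a discrete valuation ring $A$ with residue field $\K$ (or rather a field into which the relevant data descend) and fraction field of characteristic $0$, realizing $X'$ as the special fiber of a family whose generic fiber is a member of $\mcY$ over $\mbC$. Concretely: choose complex numbers $\beta_1,\dots,\beta_N$ algebraically independent over $\mbQ$ and let the generic fiber $X$ be the member of $\mcY_{\mbC}$ with parameters $(\beta_1,\dots,\beta_N)$; standard spreading-out then produces a mixed-characteristic DVR $A$ (obtained from $\mbZ[t_1,\dots,t_N]$ localized at a suitable prime over $p$ and completed) with residue field a purely transcendental extension of $\mbF_p$ over which $X'$ is defined, and over $A$ a flat proper family $\mcZ$ with geometrically integral fibers whose special fiber is $X'_{k}$ and whose generic fiber is (a base change of) $X$. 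One must check the geometric integrality of the fibers, which follows from quasi-smoothness/irreducibility of the weighted hypersurfaces in question; this is where Condition \ref{cd:T}.(1) is used to guarantee $X$ is normal with the expected singularities.

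Now I would invoke Theorem \ref{thm:sp} in its contrapositive form. The special fiber $X'$ admits the universally $\CH_0$-trivial resolution $Y' \to X'$ by the first paragraph. Since $X'$ is isomorphic to an orbifold member with isolated cyclic quotient singularities — here one uses Remark \ref{rem:sing}.(2) or the stronger Condition \ref{cd:T}.(2)(a) — this resolution exists with the required SNC structure. The geometric generic fiber $X_{\overline{K}}$ is a very general orbifold Fano hypersurface over a characteristic-$0$ field; by Condition \ref{cd:T}.(1) (applied over $\overline{K}$, an algebraically closed field of characteristic $0$) and Remark \ref{rem:sing}.(2) it has only isolated cyclic quotient singularities, hence admits a universally $\CH_0$-trivial resolution $\tilde{X} \to X_{\overline{K}}$. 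If $\tilde{X}$ were universally $\CH_0$-trivial, then Theorem \ref{thm:sp} would force $Y'$ to be universally $\CH_0$-trivial, contradicting what we established. Therefore $\tilde{X}$ is not universally $\CH_0$-trivial; taking $Y \to X$ to be (the base change over $\mbC$ of) such a resolution gives the statement, after noting that universal $\CH_0$-triviality of the resolution descends from $\overline{K}$ to $\mbC$ along the field inclusion, and that ``very general'' is preserved because only countably many conditions (algebraic independence plus the finitely many conditions defining the locus where Condition \ref{cd:T}.(1) holds) are imposed.

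The main obstacle is the bookkeeping of the spreading-out step: ensuring that the single resolution $Y' \to X'$ found in characteristic $p$ genuinely descends to a model over a DVR with the correct residue field, and that the generic fiber of that model is a \emph{very general} complex member rather than merely \emph{some} member. The resolution of singularities of the special fiber is not an issue since cyclic quotient singularities have explicit toric resolutions that exist in a family-independent way; the delicate point is matching the parameter loci in the two characteristics so that both ``very general'' conclusions hold simultaneously. This is handled by the observation preceding the lemma that $T_k \setminus T_k^{\operatorname{indep}}$ is a countable union of divisors, so the independence condition cuts out a very general locus, and by choosing the transcendence basis compatibly on both sides of the specialization.
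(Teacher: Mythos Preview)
Your overall strategy is the same as the paper's: build a mixed-characteristic DVR whose special fiber is the characteristic-$p$ member $X'$ and whose geometric generic fiber is a very general characteristic-$0$ member, then run Theorem~\ref{thm:sp} contrapositively. The gap is in your DVR. You take $A$ to be (the completion of) $\mbZ[t_1,\dots,t_N]$ localized at a prime over $p$, and you correctly note that its residue field is only a purely transcendental extension of $\mbF_p$. But Theorem~\ref{thm:sp} requires the residue field to be \emph{algebraically closed}, and your $A$ does not have that. Passing to a maximal unramified extension would repair this, but then you must argue that the fraction field still embeds into $\mbC$ in a way that sends the lifted parameters to your chosen $\beta_i$; you gesture at this but do not carry it out.

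The paper sidesteps the issue by a cleaner choice of DVR. First it replaces $\K$ by a countable algebraically closed subfield over which all the data in Condition~\ref{cd:T}.(2) are already defined and $T_{\K}^{\operatorname{indep}}\ne\emptyset$. Then it takes $R=W(\K)$, the Witt vectors, which is automatically a complete DVR with residue field exactly $\K$; this makes Theorem~\ref{thm:sp} apply directly. Countability of $\K$ forces the fraction field $K$ to be countable, so $\bar K$ embeds into $\mbC$. Finally, after lifting the parameters $\alpha'_i\in\K$ to $a_i\in R$ (still algebraically independent over $\mbQ$), the paper uses an automorphism of $\mbC$ sending the images of the $a_i$ to the coordinates $\alpha_i$ of the given very general $X$; this is the precise mechanism that identifies the geometric generic fiber with $X$, which in your write-up remains only a hope. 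Your ``spreading out over $\mbZ[t_1,\dots,t_N]$'' can be made to work, but only after these two repairs, at which point it essentially becomes the paper's argument.
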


\begin{proof}
%
We may assume that $\K$ is countable.
Indeed, we can take finitely many elements $\gamma_1,\dots,\gamma_m \in \K$ such that, for $k = \mbF_p (\gamma_1,\dots,\gamma_m) \subset \K$, every objects appearing in Condition \ref{cd:T}.(2) ($X'$, $\varphi' \colon Y' \to X'$, etc.)  can be defined over the algebraic closure $\bar{k} \subset \K$ and $T_{\bar{k}}^{\operatorname{indep}} \ne \emptyset$.
Replacing $\K$ with $\bar{k}$, we may assume that $\K$ is countable.

Let $R = W (\K)$ be the ring of Witt vectors over $\K$, which is a complete discrete valuation ring whose residue field is $\K$ and the quotient field $K$ is of characteristic $0$.
Since $R = \K \oplus \K \oplus \cdots$ set-theoretically, its quotient field $K$ is countable.
This implies that there is an embedding $\iota_0 \colon \bar{K} \inj \mbC$, where $\bar{K}$ is a fixed algebraic closure of $K$.

Let $X$ be a very general member of the family $\mcY_{\mbC} \to T_{\mbC}$ and $P = (\alpha_1,\dots,\alpha_N) \in T_{\mbC}^{\operatorname{indep}}$ the corresponding point.
We choose and fix a point $P' = (\alpha'_1,\dots,\alpha'_N) \in T_{\K}^{\operatorname{indep}}$ and let $X'$ be the corresponding member of $\mcY_{\K} \to T_{\K}$.
For each $i$, we choose and fix a lift $a_i \in R$ of $\alpha'_i$ via $R \to \K$.
Let $V$ be the fiber of $\mcX_R \to T_R$ over the $R$-point $(a_1,\dots,a_N) \in T_R$.
Note that $V$ is a projective scheme over $R$ whose special fiber $V_{\K}$ is isomorphic to $X'$.
By Condition \ref{cd:T}.(2) (and see also Remark \ref{rem:sing}), $V_{\K} \cong X'$ admits a universally $\CH_0$-trivial resolution $\varphi' \colon Y' \to X'$. 
Moreover $Y'$ is not universally $\CH_0$-trivial by Lemma \ref{lem:totaro}. 
Since the $\alpha'_i$ are algebraically independent over $\mbF_p$, the $a_i \in K$ are algebraically independent over $\mbQ$.
It follows that the geometric generic fiber $V_{\bar{K}}$ is a very general member of the family $\mcY_{\overline{K}} \to T_{\overline{K}}$.
In particular it is quasi-smooth and has only isolated cyclic quotient singularities. 
Thus there exists a universally $\CH_0$-trivial resolution $\tilde{V}_{\bar{K}} \to V_{\bar{K}}$ of singularities (see Remark \ref{rem:sing}).
Hence, by Theorem \ref{thm:sp}, $\tilde{V}_{\bar{K}}$ is not universally $\CH_0$-trivial.
Now we can choose an automorphism $\tau \colon \mbC \to \mbC$ which maps $\iota_0 (a_i)$ to $\alpha_i$. 
We set $\iota = \tau \circ \iota_0 \colon \bar{K} \inj \mbC$.
Then the base change via $\iota \colon K \inj \mbC$ of the generic fiber of $V \to \Spec R$ is isomorphic to $X$ and the base change $\tilde{V}_{\mbC} \to V_{\mbC} \cong X$ via $\iota$ of the resolution $\tilde{V}_{\bar{K}} \to V_{\bar{K}}$ gives a universally $\CH_0$-trivial resolution of $X$.
The proof is completed since $\tilde{V}_{\mbC}$ is not universally $\CH_0$-trivial.
\end{proof}

\begin{Lem}
A very general member of the family $\mcX_{\mbC} \to \mbP^M_{\mbC}$ is not stably rational.
\end{Lem}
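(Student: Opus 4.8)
The plan is to deduce the statement from Lemma~\ref{lem:T1} together with the specialization Theorem~\ref{thm:sp} and the lemmas of Colliot-Th\'el\`ene--Pirutka and Totaro. The first point to record is that a member of the subfamily $\mcY_{\mbC} \to T_{\mbC}$ is also a member of the ambient family $\mcX_{\mbC} \to \mbP^M_{\mbC}$, since $\mcY_{\mbC} = \mcX_{\mbC} \times_{\mbP^M_{\mbC}} T_{\mbC}$ and $T_{\mbC} \subset \mbP^M_{\mbC}$. Hence Lemma~\ref{lem:T1} already provides a \emph{single} member $X$ of $\mcX_{\mbC} \to \mbP^M_{\mbC}$, lying over a point $[X] \in T_{\mbC}$, equipped with a universally $\CH_0$-trivial resolution $\varphi \colon Y \to X$ for which $Y$ is \emph{not} universally $\CH_0$-trivial. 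It then remains to promote this single bad member to a very general one.

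For this I would apply Theorem~\ref{thm:sp} to a discrete valuation ring centred at $[X]$. Let $U \subset \mbP^M_{\mbC}$ be the dense open locus over which the fibre of $\mcX_{\mbC}$ is quasi-smooth with only isolated cyclic quotient singularities (nonempty by Remark~\ref{rem:sing}), and choose a sufficiently general curve $C \subset \mbP^M_{\mbC}$ through $[X]$, general enough that its generic point lies in $U$ and avoids each of the countably many proper subvarieties of $\mbP^M$ defined over $\overline{\mbQ}$. Put $A = \mcO_{C,[X]}$, a DVR with residue field $\mbC$, and let $\mcX_A \to \Spec A$ be the restriction of $\mcX_{\mbC}$ (flat and proper, being a base change of $\mcX_{\mbC} \to \mbP^M_{\mbC}$); both its fibres are quasi-smooth, hence normal and geometrically integral. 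Its special fibre is $X$, which carries the universally $\CH_0$-trivial resolution $Y$ with $Y$ not universally $\CH_0$-trivial; its geometric generic fibre $\mcX_{\bar K}$ is quasi-smooth with only isolated cyclic quotient singularities, hence admits a universally $\CH_0$-trivial resolution $\tilde X \to \mcX_{\bar K}$ by Remark~\ref{rem:sing}. Theorem~\ref{thm:sp} then forces the following: if $\tilde X$ were universally $\CH_0$-trivial, so would be $Y$; since it is not, $\tilde X$ is not universally $\CH_0$-trivial.

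Because the generic point of $C$ was chosen very general, $\mcX_{\bar K}$ is, after an isomorphism of algebraically closed fields $\bar K \cong \mbC$ fixing $\overline{\mbQ}$, the fibre of $\mcX_{\mbC}$ over a very general point of $\mbP^M_{\mbC}$; and universal $\CH_0$-triviality of a smooth projective variety is unchanged under extension of the algebraically closed ground field (it is detected by a decomposition of the diagonal). Hence a very general member $\mcX_t$ of $\mcX_{\mbC} \to \mbP^M_{\mbC}$ admits a resolution $\tilde{\mcX}_t$ that is not universally $\CH_0$-trivial. If such an $\mcX_t$ were stably rational, then $\tilde{\mcX}_t$ --- being smooth, projective and birational to $\mcX_t$ --- would be stably rational and therefore universally $\CH_0$-trivial by \cite[Lemma~1.5]{CTP}, a contradiction. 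Therefore a very general member of $\mcX_{\mbC} \to \mbP^M_{\mbC}$ is not stably rational.

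The step I expect to be the main obstacle is the passage from ``one bad member'' to ``very general bad member'': Theorem~\ref{thm:sp} is formulated for a single DVR with algebraically closed residue field, so one must realise a sufficiently generic point of $\mbP^M_{\mbC}$ as the generic point of a DVR specialising to $[X]$ --- a general curve through $[X]$ does this --- and one must invoke the routine but slightly technical facts that a very general complex member realises the geometric generic fibre of the family and that universal $\CH_0$-triviality of a smooth model is insensitive to enlarging the algebraically closed base field. Equivalently, one can package the argument by showing that the locus of $t \in U$ with $\tilde{\mcX}_t$ universally $\CH_0$-trivial is a countable union of closed subvarieties, each \emph{proper} by Lemma~\ref{lem:T1}, so that its complement is the complement of a countable union of proper closed subsets and hence contains a very general point. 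Everything else reduces to a direct application of Lemma~\ref{lem:T1}, Theorem~\ref{thm:sp}, Remark~\ref{rem:sing}, and \cite[Lemma~1.5]{CTP}.
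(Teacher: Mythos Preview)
Your proof is correct and follows essentially the same strategy as the paper's: both take the bad member supplied by Lemma~\ref{lem:T1}, pass to a curve through it, apply Theorem~\ref{thm:sp} to the associated DVR, and use that very general fibres are abstractly isomorphic to the geometric generic fibre. The paper packages this as a proof by contradiction---assuming the geometric generic fibre of $\mcX \to \mbP^M_{\mbC}$ is stably rational, which makes the countable union $\Sigma$ to be avoided explicit from the outset---and cites \cite[Lemma~2.1]{Vial} for the identification of very general and geometric generic fibres that you invoke more informally; beyond this packaging difference the arguments coincide.
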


\begin{proof}
Let $\bar{K}$ be an algebraic closure of the function field $K = \mbC (\mbP^M_{\mbC})$ and let $\mcX_{\bar{K}}$ be the geometric generic fiber of $\mcX \to \mbP^M_{\mbC}$.
For a closed point $P \in \mbP^M_{\mbC}$, we denote by $X_P$ the fiber of $\mcX \to \mbP^M_{\mbC}$ over $P$.
By \cite[Lemma 2.1]{Vial}, there exists a subset $\Sigma \subset \mbP^M_{\mbC}$ which is a countable union of proper closed subsets of $\mbP^M_{\mbC}$ such that $X_P$ is isomorphic to $\mcX_{\bar{K}}$ as an abstract scheme.
The variety $\mcX_{\bar{K}}$ has only isolated cyclic quotient singularities and thus admits a universally $\CH_0$-trivial toric resolution $\tilde{X}_{\bar{K}} \to \mcX_{\bar{K}}$.
Moreover, if we are given a point $P \in \mbP^M_{\mbC} \setminus \Sigma$, then the fiber $X_P$ admits a universally $\CH_0$-trivial resolution $Y_P \to X_P$ such that $Y_P$ is isomorphic to $\tilde{X}_{\bar{K}}$ as an abstract scheme.
Since the Chow group of a variety $X$ only depend on $X$ as a scheme (see \cite[Lemma 2.1]{Vial}), it follows that $Y_P$ is universally $\CH_0$-trivial if and only if so is $\tilde{X}_{\bar{K}}$.
Thus, if we show that there exists a point $P \in \mbP^M_{\mbC} \setminus \Sigma$ such that $Y_P$ is not universally $\CH_0$-trivial, then, for any $P' \in \mbP^M_{\mbC} \setminus \Sigma$, $Y_{P'}$ is not universally $\CH_0$-trivial, hence $X_{P'}$ is not stably rational.

By Lemma \ref{lem:T1}, for a very general point $Q \in T_{\mbC} \subset \mbP^M_{\mbC}$, the fiber $X_Q$ of $\mcY \to T_{\mbC}$ over $Q$ admits a universally $\CH_0$-trivial resolution $Y_Q \to X_Q$ of singularities such that $Y_Q$ is not universally $\CH_0$-trivial.
Let $C \subset \mbP^M_{\mbC}$ be a nonsingular curve such that $Q \in C$ and $C \not\subset \Sigma$.
We can indeed take such a curve $C$ by choosing any point $P'' \in \mbP^M_{\mbC} \setminus \Sigma$ and successively cutting down $\mbP^M_{\mbC}$ by general hyperplanes passing through $P''$ and $Q$.
By Theorem \ref{thm:sp} applied to the local ring $\mcO_{C,Q}$, the geometric generic fiber $\mcX_{\overline{\mbC (C)}}$ of $\mcX \times_{\mbP^M_{\mbC}} C \to C$ admits a universally $\CH_0$-trivial toric resolution $\tilde{\mcX}_{\overline{\mbC (C)}} \to \mcX_{\overline{\mbC} (C)}$ such that $\tilde{\mcX}_{\overline{\mbC (C)}}$ is not universally $\CH_0$-trivial.
Repeating the same argument as in the first part of the proof, we conclude that the fiber $X_P$ admits a universally $\CH_0$-trivial toric resolution $Y_P \to X_P$ such that $Y_P$ is universally $\CH_0$-trivial for a very general point $P \in C$.
Since $P \in C$ is very general and $\Sigma \cap C \ne \emptyset$, we may assume that $P \notin \Sigma$.
Therefore the proof is completed.
\end{proof}

\section{General construction of a good resolution} \label{sec:resol}

\subsection{Cyclic covers and admissible critical points}

We briefly recall Koll\'ar's construction of a suitable line bundle on an inseparable cyclic covering space and then give definition of critical points (see \cite[Section V.5]{Kollar} for details).

Let $Z$ be a smooth variety of dimension $n$ over an algebraically closed field of positive characteristic $p$, $\mcL$ a line bundle on $Z$, $m$ a positive integer and $s \in H^0 (Z,\mcL^m)$ a global section.
Let $\pi \colon X \to Z$ be the cyclic cover of degree $m$ branched along the zero locus $(s = 0) \subset Z$.
Throughout the present section, we assume that $p \mid m$ and that the branched divisor $(s = 0)$ is reduced.
In this setting, there is a line bundle on $\mcQ (\mcL,s)$ on $Z$ such that $\pi^*\mcQ (\mcL,s) \subset (\Omega_X^{n-1})^{\vee \vee}$, where $(\Omega^{n-1}_X)^{\vee \vee}$ denotes the double dual of $\Omega_X^{n-1}$, and $\mcQ (\mcL,s) \cong \omega_Z \otimes \mcL^m$.

Singularities of $X$ can be understood by critical points of $s$.
Let $\msq \in Z$ be a point, $x_1,\dots,x_n$ local coordinates of $Z$ at $\msq$ and $\tau$ a local generator of $\mcL$ at $\msq$.
Then, locally around $\msq$, we can write $s = f \tau^m$, where $f = f (x_1,\dots,x_n) \in \mcO_{Z,\msq}$.

\begin{Def}
We say that $s$ has a {\it critical point} at $\msq$ if $\prt f/\prt x_1 = \cdots = \prt f/\prt x_n = 0$ at $\msq$.
\end{Def}

Note that the above definition does not depend on the choice of a local generator $\tau$ and local coordinates $x_1,\dots,x_n$.
We have
\[
\Sing X = \pi^{-1} (\{ \text{critical point of $s$} \}).
\]
We give a definition of admissible critical point of $s$, which ensures some mildness of singularities of $X$.
The following definition is complicated and we refer readers to \cite[Section 3.3]{Okada3} for details.

\begin{Def}
We say that $s \in H^0 (Z,\mcL)$ has an {\it admissible critical point} at $\msq \in Z$ if in a local expression $s = f \tau^m$, $f$ satisfies one of the following:
\begin{enumerate}
\item Either $n$ is even or $n$ is odd and $p \ne 2$, and the quadratic part of $f$ is nondegenerate.
\item $n$ is odd, $p = 2$, $m = 2$ and $\operatorname{length} \mcO_{Z,\msq}/(\prt f/\prt x_1,\dots, \prt f/\prt x_{n}) = 2$.
\item $n$ is odd, $p = 2$, $m \ne 2$, $2^2 \nmid m$, $\operatorname{length} \mcO_{Z,\msq}/(\prt f/\prt x_1,\dots, \prt f/\prt x_{n}) = 2$ and $s$ does not vanish at $\msq$.
\item $n$ is odd, $p = 2$, $m \ne 2$, $2^2 \nmid m$, $\operatorname{length} \mcO_{Z,\msq}/(\prt f/\prt x_1,\dots, \prt f/\prt x_{n}) = 2$, $s$ vanishes at $\msq$ and the quadratic part of $f$ is nondegenerate.
\item $n$ is odd, $p = 2$, $2^2 \mid m$, $\operatorname{length} \mcO_{Z,\msq}/(\prt f/\prt x_1,\dots, \prt f/\prt x_{n}) = 2$ and the quadratic part of $f$ is nondegenerate.
\end{enumerate}
\end{Def}

Note that the above definition does not depend on the choice of $\tau$ and $x_1,\dots,x_n$.

\begin{Rem}
Suppose that $n$ is odd and $p = 2$.
In this case, by \cite[Section V.5]{Kollar}, the condition $\operatorname{length} \mcO_{Z,\msq}/(\prt f/\prt x_1,\dots, \prt f/\prt x_{n}) = 2$ is satisfied if and only if in a suitable choice of local coordinates $x_1,\dots,x_n$, $f$ can be written as
\[
f = \alpha + \beta x_1^2 + x_2 x_3 + x_4 x_5 + \cdots + x_{n-1} x_n + \gamma x_1^3 + g (x_1,\dots,x_n),
\]
where $\alpha, \beta, \gamma \in \K$ with $\gamma \ne 0$ and $g$ is a linear combination of monomials of degree at least $3$ other than $x_1^3$.
Under the above choice of coordinates, $f$ is nondegenerate if and only if $\beta \ne 0$.
\end{Rem}

\subsection{Construction}

Let $\mbP = \mbP (a_0,\dots,a_{n},b)$ be a weighted projective space defined over an algebraically closed field $\K$ of positive characteristic $p$ with homogeneous coordinates $x_0,\dots,x_n$ and $w$ of weight $a_0,\dots,a_n$ and $b$, respectively.
Let $m$ be a positive integer divisible by $p$.
Let $X$ be a weighted hypersurface in $\mbP$ defined by
\[
f (x_0,\dots,x_n,w^m) = 0.
\]
We define $Z$ to be the weighted hypersurface defined by $f (x_0,\dots,x_n,\bar{w}) = 0$ in the weighted projective space $\mbP (a_0,\dots,a_n,mb)$ of coordinates $x_0,\dots,x_n$ and $\bar{w}$ and let $\pi \colon X \to Z$ be the morphism defined by $\pi^* \bar{w} = w^m$.
We define $\mcL = \mcO_Z (b)$.
Then, $\bar{w}$ is a global section of $(\mcL^m)^{\vee \vee} \cong \mcO_Z (mb)$. 
We set $a_{\operatorname{sum}} = \sum_{i=0}^n a_i$. 
We introduce the following condition on $X$ and $Z$.

\begin{Cond} \label{cdgen}
\begin{enumerate}
\item $Z$ is well-formed (see Definition \ref{def:qsmwf}) and normal.
\item There exists a non-empty smooth open subset $Z^{\circ} \subset Z$ such that the section $\bar{w}$ has only admissible critical points on $Z^{\circ}$ and $X$ has at most isolated cyclic quotient singular points along $X \setminus \pi^{-1} (Z^{\circ})$. 
\item $n \ge 3$.
\item $d - a_{\operatorname{sum}} \ge 0$ and $H^0 (Z, \mcO_Z (d-a_{\operatorname{sum}})) \ne 0$.
\end{enumerate}
\end{Cond}

\begin{Prop} \label{prop:constr}
If $X$ satisfies \emph{Condition \ref{cdgen}}, then there exists a resolution $\varphi \colon Y \to X$ of singularities of $X$ such that the exceptional divisor is a SNC divisor with smooth rational components and $H^0 (Y,\Omega_Y^{n-1}) \ne 0$.
\end{Prop}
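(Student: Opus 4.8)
The plan is to combine Koll\'ar's differential form on the inseparable cyclic cover $\pi \colon X \to Z$ with an explicit resolution of the finitely many singular points of $X$, and then to check that the pullback of the form remains regular. I begin by identifying the form. By adjunction on the well-formed normal degree-$d$ hypersurface $Z \subset \mbP (a_0,\dots,a_n,mb)$ one has $\omega_Z \cong \mcO_Z (d - a_{\operatorname{sum}} - mb)$ as reflexive sheaves, and since $(\mcL^m)^{\vee\vee} \cong \mcO_Z (mb)$, Koll\'ar's line bundle satisfies (reflexive hulls understood)
\[
\mcQ (\mcL,\bar{w}) \;\cong\; \omega_Z \otimes \mcL^m \;\cong\; \mcO_Z (d - a_{\operatorname{sum}} - mb) \otimes \mcO_Z (mb) \;\cong\; \mcO_Z (d - a_{\operatorname{sum}}).
\]
Hence Condition \ref{cdgen}.(4) provides a non-zero section $\xi \in H^0 (Z,\mcQ (\mcL,\bar{w}))$. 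Restricting to the smooth locus $Z_{\mathrm{sm}}$ (whose complement has codimension $\ge 2$ as $Z$ is normal), where Koll\'ar's inclusion $\pi^* \mcQ (\mcL,\bar{w}) \subset (\Omega_X^{n-1})^{\vee\vee}$ is defined, and using that $\pi$ is finite so that $\pi^* \xi \ne 0$, I obtain a non-zero section of the reflexive sheaf $(\Omega_X^{n-1})^{\vee\vee}$ over $\pi^{-1} (Z_{\mathrm{sm}})$. As $X$ is normal this extends uniquely to a non-zero $\eta \in H^0 (X,(\Omega_X^{n-1})^{\vee\vee})$, which on the smooth locus $X_{\mathrm{sm}}$ is an honest regular $(n-1)$-form.

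Next I would build $\varphi \colon Y \to X$ locally around each singular point and glue. By Condition \ref{cdgen}.(2), the singular points of $X$ over $Z^{\circ}$ are preimages of the critical points of $\bar{w}$, which are isolated since admissibility forces the critical locus to be $0$-dimensional, while those over $X \setminus \pi^{-1} (Z^{\circ})$ are isolated cyclic quotient points; so $\Sing X$ is a finite set of codimension $\ge 2$, and $X$, being a hypersurface with only finitely many singular points and $n \ge 3$, is normal. Over a cyclic quotient point I take a toric resolution \cite[Theorem 11.2.2]{CLS}, whose exceptional components are smooth toric, hence smooth rational. Over a point coming from an admissible critical point I invoke the local analysis of \cite[Section V.5]{Kollar} and \cite[Section 3.3]{Okada3}: in each of the five admissibility cases $X$ has an explicit local normal form and admits a local resolution, given by a suitable (weighted) blow-up, whose exceptional divisor is SNC with smooth rational components. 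Patching these local resolutions produces a resolution $\varphi \colon Y \to X$ with the required exceptional divisor, which is moreover an isomorphism over $X_{\mathrm{sm}}$.

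It then remains to check $\varphi^* \eta \in H^0 (Y,\Omega_Y^{n-1})$ and $\varphi^* \eta \ne 0$; non-vanishing is immediate since $\eta \ne 0$ and $\varphi$ is birational. Away from $\Exc (\varphi)$, regularity follows from regularity of $\eta$ on $X_{\mathrm{sm}}$. Near an exceptional component over a cyclic quotient point, regularity follows from the toric (equivalently, quotient-singularity) extension property for differential forms: a form regular on the smooth locus pulls back regularly to any toric resolution. Near an exceptional component over an admissible critical point, I would run the discrepancy-type computation of \cite[Section 3.3]{Okada3}: writing $\eta$ in Koll\'ar's local normal form coming from the inclusion $\pi^* \mcQ (\mcL,\bar{w}) \subset (\Omega_X^{n-1})^{\vee\vee}$ and pulling it through the explicit (weighted) blow-up, the admissibility hypothesis is exactly what forces the order of $\varphi^* \eta$ along each exceptional component to be non-negative. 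Thus $\varphi^* \eta$ is a non-zero global $(n-1)$-form on $Y$ and $H^0 (Y,\Omega_Y^{n-1}) \ne 0$.

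The main obstacle is the final point for admissible critical points: in positive characteristic one must control, uniformly over the five admissibility types, both the combinatorics of the exceptional divisor of the chosen (weighted) blow-up --- its smoothness, the SNC property, and rationality of its components --- and the vanishing order of the pulled-back Koll\'ar form along it. This is precisely why the definition of admissible critical point is split into five cases, and it is where the bulk of the work lies; by contrast, the sheaf-theoretic extension of $\eta$, the gluing of the local resolutions, and the cyclic quotient part are comparatively routine (the last being standard toric geometry).
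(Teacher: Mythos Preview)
Your proposal is correct and follows essentially the same architecture as the paper: identify Koll\'ar's sub-sheaf $\pi^*\mcQ(\mcL,\bar w)\subset(\Omega_X^{n-1})^{\vee\vee}$, compute it as $\mcO_X(d-a_{\operatorname{sum}})$ via adjunction, use Condition~\ref{cdgen}.(4) to produce a non-zero section, resolve the admissible-critical-point singularities using the explicit local analysis of \cite{Okada3}, resolve the cyclic quotient points torically, and verify that the form lifts through each local resolution.

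Two small remarks on alignment with the paper. First, the precise reference you want for the admissible-critical-point case is \cite[Proposition~4.1]{Okada3} rather than Section~3.3: the latter only sets up the definitions, while the former packages exactly the statement you need (existence of a resolution with SNC rational exceptional divisor through which $\varphi_U^*\mcM_U\hookrightarrow\Omega_{Y_U}^{n-1}$). Second, the ``toric extension property'' you invoke at the cyclic quotient points is not quite off-the-shelf in positive characteristic; the paper isolates and proves it separately as Lemma~\ref{lem:toriclift}, using Danilov's description of $(\Omega_X^j)^{\vee\vee}$ on an affine toric variety and checking that the graded pieces $V_{\Gamma_\sigma(m)}$ only grow under the cone refinement defining the resolution. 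Your sketch is right to flag this step as routine toric geometry, but it does require a short argument rather than a bare citation.
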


\begin{proof}
Let $V$ be the smooth locus of $Z$ and set $U = \pi^{-1} (V) \subset X$.
By \cite[Section V.5]{Kollar}, there exists a sub line bundle $\mcM_U := \pi^* \mcQ (\mcL|_V, \bar{w})$ of $(\Omega_{U}^{n-1})^{\vee \vee}$.
Condition \ref{cdgen}.(1) implies that $\omega_Z \cong \mcO_Z (d-a_{\operatorname{sum}} - m b)$ and Condition \ref{cdgen}.(2) in particular implies that the branched divisor $(s = 0) \subset Z$ is reduced.
Hence we have an isomorphism
\[
\mcM_U \cong \pi^* (\omega_V \otimes (\mcL|_V)^{\otimes m}) \cong \mcO_{U} \left( d-a_{\operatorname{sum}}\right).
\]
We define $\mcM \subset (\Omega_X^{n-1})^{\vee \vee}$ to be the pushforward of $\mcM_U$ by the injection $U \inj X$.
Note that $\mcM \cong \mcO_X (d-a_{\operatorname{sum}})$ and it is a reflexive sheaf of rank $1$ (which may not be an invertible sheaf in general).

Let $t$ be any global section of $\mcM \cong \mcO_Z (d-a_{\operatorname{sum}})$, which exists by Condition \ref{cdgen}.(4).
We have an injection $\mcO_X \inj \mcM$, which is a multiplication by $t$, and let $\mcN \cong \mcO_X$ be its image.

Note that $s$ does not have a critical point at $V \setminus Z^{\circ}$ because otherwise $X$ has a non-quotient singular point along $X \setminus \pi^{-1} (Z^{\circ})$ which is impossible by Condition \ref{cdgen}.(2).
It follows that $s$ has only admissible critical points on $V$.
Thus, by Conditions \ref{cdgen}.(3) and \cite[Proposition 4.1]{Okada3}, there exists a resolution $\varphi_U \colon Y_U \to U$ of singularities of $U$ such that the exceptional divisor is a SNC divisor with smooth rational components and ${\varphi_U}^* (\mcM|_U) \inj \Omega_{Y_U}^{n-1}$.
This implies that $\varphi_U^*(\mcN|_U) \inj \Omega_Y^{n-1}$.
Let $\varphi \colon Y \to X$ be a resolution such that $\varphi$ coincides $\varphi_U$ over $U$ and $\varphi$ is a toric resolution of singularities of isolated cyclic quotient singular points on $X \setminus X^{\circ}$ such that the fiber of $\varphi$ over any cyclic quotient singular point is a SNC divisor whose component is a nonsingular toric variety.
By Lemma \ref{lem:toriclift} below (see also Remark \ref{rem:lift}), we conclude that $\mcO_Y \cong \varphi^*\mcN \inj \Omega_Y^{n-1}$.
Therefore $H^0 (Y,\Omega_Y^{n-1}) \ne 0$.
\end{proof}

\subsection{Lifting lemma for differential forms on toric varieties}

In this subsection, we prove that the pullback via a toric resolution of a differential $j$-form on a toric variety is a regular $j$-form.

We recall necessary definitions of toric varieties and we refer readers to \cite[Section 4]{Danilov} for details.
Let $\msM$ be an $n$-dimensional lattice and $\sigma \subset \msM$ a convex rational polyhedral cone generating $\msM_{\mbQ} = \msM \otimes_{\mbZ} \mbQ$.
Let $k$ be a field and we set $A = k [\sigma \cap \msM]$, $X = \Spec A$.
For $m \in \sigma \cap \msM$, we denote by $\chi^m \in A$ the corresponding monomial, and by $\Gamma_{\sigma} (m)$ the smallest face of $\sigma$ containing $m$.

We set $V = \msM \otimes_{\mbZ} k$.
For a face $\tau \subset \sigma$, we define a subspace $V_{\tau} \subset V$ as follow:
if $\tau$ is of codimension one, then we define
\[
V_{\tau} = (\msM \cap (\tau - \tau)) \otimes_{\mbZ} k
\]
and in general we define
\[
V_{\tau} = \bigcap_{\theta \supset \tau} V_{\theta},
\]
where $\theta$ ranges over the faces of $\tau$ of codimension $1$ containing $\tau$. 
For $j = 1,2,\dots,n$, we define 
\[
\Omega_{\sigma}^j = \bigoplus_{m \in \sigma \cap \msM} \bigwedge^j (V_{\Gamma (m)}) \cdot \chi^m,
\]
which is a $\msM$-graded $k$-vector space.
It is easy to see that $\Omega^j_{\sigma}$ is naturally embedded into the $A$-module $(\bigwedge^j V) \otimes_k A$ and thus equipped with the structure of an $\msM$-graded $A$-module.

\begin{Prop}[Proposition 4.3, \cite{Danilov}] \label{propdanilov}
The sheaf $(\Omega_X^j)^{\vee \vee}$ is isomorphic to the sheaf associated with the $A$-module $\Omega_{\sigma}^j$. 
\end{Prop}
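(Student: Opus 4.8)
The plan is to establish the isomorphism by checking it locally on the distinguished open affines of $X$ and then patching, though since $X = \Spec A$ is already affine the real content is a direct comparison of $A$-modules after identifying the reflexive hull $(\Omega_X^j)^{\vee\vee}$ with its sections over a suitable open subset. First I would recall that $X$ is normal (it is the affine toric variety associated to the cone $\sigma$ generating $\msM_{\mbQ}$, hence Cohen--Macaulay and normal), so that for any reflexive sheaf $\mcF$ one has $\mcF = j_* (\mcF|_{X_{\mathrm{reg}}})$ where $j \colon X_{\mathrm{reg}} \inj X$ is the inclusion of the smooth locus, and in fact it suffices to restrict to the torus $T \subset X$ together with the codimension-one orbits, i.e. to the complement of a codimension-$2$ closed subset. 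Concretely, $(\Omega_X^j)^{\vee\vee} = \bigcap_{\tau} (i_\tau)_* \Omega^j_{X_\tau / k}$ where $\tau$ ranges over the codimension-$1$ faces of $\sigma$ (the ray generators of the dual fan) and $X_\tau$ is the corresponding smooth one-codimensional localization, since a reflexive sheaf on a normal variety is the intersection of its localizations at codimension-$1$ points.

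The second step is to compute each of these codimension-one localizations explicitly. If $\tau$ is a codimension-$1$ face, the localization of $A$ "away from everything but the face $\tau$" is a regular ring (in fact it looks like a Laurent polynomial ring in $n-1$ variables over a DVR, or just a Laurent polynomial ring in $n$ variables when $\tau$ meets the interior appropriately), and on such a smooth piece $\Omega^j$ is free with the expected $\msM$-grading: the graded piece in degree $m$ is $\bigwedge^j(V_\tau) \cdot \chi^m$ when $m$ lies in the relative interior of $\tau$ (so that $\Gamma_\sigma(m) = \tau$ forces $V_{\Gamma(m)} = V_\tau$), and more generally one reads off $\bigwedge^j(V_{\Gamma(m)}) \cdot \chi^m$. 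Here one uses the standard fact that on the big torus $T = \Spec k[\msM]$ the module of Kähler differentials is $\Omega^1_{T/k} = V \otimes_k k[\msM]$ with $d\chi^m = m \otimes \chi^m$ (writing $m \in \msM \subset V$ under $\msM \to V = \msM \otimes_{\mbZ} k$), so $\Omega^j_{T/k} = (\bigwedge^j V) \otimes_k k[\msM]$; restricting to the codimension-one orbit closures cuts down $V$ to the subspaces $V_\tau$ in the manner recorded above. This is exactly the content built into the definition of $\Omega^j_\sigma$.

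The third step is to intersect: taking the intersection over all codimension-$1$ faces $\tau$ of the graded modules computed in step two, a monomial-by-monomial check shows the degree-$m$ part of $(\Omega^j_X)^{\vee\vee}$ is $\bigcap_{\tau \ni m}\bigwedge^j(V_\tau)\cdot\chi^m = \bigwedge^j(\bigcap_{\tau \ni m} V_\tau)\cdot\chi^m$, and by the very definition $V_{\Gamma(m)} = \bigcap_{\theta \supset \Gamma(m)} V_\theta$ this equals $\bigwedge^j(V_{\Gamma(m)})\cdot\chi^m$. Summing over $m \in \sigma \cap \msM$ gives $\Omega^j_\sigma$, and the identification is visibly compatible with the $A$-module structure coming from the ambient $(\bigwedge^j V)\otimes_k A$, yielding the sheaf isomorphism.

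The main obstacle I anticipate is the interchange of the wedge power with the intersection of subspaces: in general $\bigwedge^j(\bigcap_i W_i)$ need not equal $\bigcap_i \bigwedge^j W_i$ inside $\bigwedge^j V$. What saves the argument is that the subspaces $V_\tau$ are not arbitrary — they are the $k$-linear spans of sublattices $\msM \cap (\tau - \tau)$ of $\msM$, i.e. they are defined over $\mbZ$ (or over the prime field) and are coordinate-type subspaces with respect to a common flag adapted to the face structure of $\sigma$. For such a "distributive" family of subspaces the wedge power does commute with finite intersection, and verifying this combinatorial/linear-algebra point — using that the faces of $\sigma$ and their associated sublattices form a lattice under intersection — is where the care is needed; everything else is the routine normality-plus-reflexivity bookkeeping sketched above. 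Since Proposition \ref{propdanilov} is quoted from \cite{Danilov}, I would, in the interest of space, carry out steps one and three in detail and cite Danilov for the explicit local computation in step two.
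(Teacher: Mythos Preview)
The paper does not supply its own proof of this proposition: it is stated with the attribution ``[Proposition 4.3, \cite{Danilov}]'' and used as a black box, so there is nothing in the paper to compare your argument against. Your outline---reduce to codimension-one localizations via normality and reflexivity, compute there using the torus description $\Omega^1_{T/k} \cong V \otimes_k k[\msM]$, then intersect degree-by-degree---is the standard route and is essentially how Danilov proves it.

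One comment on the obstacle you flag: the interchange $\bigcap_\theta \bigwedge^j V_\theta = \bigwedge^j \bigl(\bigcap_\theta V_\theta\bigr)$ does require justification, but the situation here is better than you suggest. Each $V_\theta$ for $\theta$ a codimension-one face is a hyperplane in $V$, and the relevant collection of hyperplanes (those $\theta$ containing the fixed face $\Gamma(m)$) is in linearly general position in the sense that their defining linear forms are linearly independent---this is because distinct codimension-one faces of $\sigma$ correspond to distinct primitive generators of the dual cone, and those generators containing a common face $\Gamma(m)$ span a face of the dual cone, hence are linearly independent. For hyperplanes with independent defining forms one can choose coordinates making them coordinate hyperplanes, and then the wedge--intersection interchange is immediate. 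So the ``distributive lattice'' language, while correct, is heavier than what is actually needed.
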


\begin{Lem} \label{lem:toriclift}
Let $X$ be a toric variety over an algebraically closed field $k$ and $\varphi \colon Y \to X$ a toric resolution of singularities of $X$.
Then there is a homomorphism $\varphi^* ((\Omega_X^j)^{\vee \vee}) \to \Omega_Y^j$ factoring $\varphi^* \Omega_X^j \to \Omega_Y^j$ for every $j = 1,\dots,\dim X$.
\end{Lem}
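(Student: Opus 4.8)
The plan is to reduce the statement to the affine-toric model described just above and then construct the required map cone-by-cone. Since the assertion is local on $X$ and $Y$, I may assume $X = \Spec A$ with $A = k[\sigma \cap \msM]$ for a full-dimensional rational polyhedral cone $\sigma$, and that $\varphi \colon Y \to X$ is induced by a smooth subdivision $\Sigma$ of $\sigma$. By Proposition \ref{propdanilov}, $(\Omega_X^j)^{\vee\vee}$ is the sheaf associated with the $A$-module $\Omega_{\sigma}^j = \bigoplus_{m \in \sigma \cap \msM} \bwedge^j(V_{\Gamma(m)}) \cdot \chi^m$, and since $Y$ is smooth, $\Omega_Y^j$ is locally free; on the affine chart $U_{\sigma'} = \Spec k[\sigma'^{\vee} \cap \msM]$ attached to a maximal cone $\sigma' \in \Sigma$ it is, again by Proposition \ref{propdanilov} (with all faces $\Gamma(m)$ equal to the whole smooth cone), the free module $\Omega_{\sigma'}^j = \bigoplus_{m} \bwedge^j(V) \cdot \chi^m$ over $k[\sigma'^{\vee}\cap \msM]$. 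So the task is to produce, compatibly over the charts of $Y$, an $A$-module map $\Omega_{\sigma}^j \otimes_A k[\sigma'^{\vee}\cap\msM] \to \Omega_{\sigma'}^j$.

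The key point is the inclusion of supports of the gradings together with a comparison of the wedge-power subspaces. First I would observe that $\varphi^*\Omega_X^j \to \Omega_Y^j$ already exists as the pullback of Kähler differentials, and that on the smooth locus of $X$ it is an isomorphism onto the corresponding piece; the content is to extend the induced rational map on $(\Omega_X^j)^{\vee\vee}$ to a genuine morphism. For this, note that $(\Omega_X^j)^{\vee\vee}$ is reflexive, hence $S_2$, so a map defined in codimension $1$ extends; thus it suffices to check the extension holds over the generic point of each $\varphi$-exceptional prime divisor, i.e. over each ray of $\Sigma$ not already a ray of $\sigma$. Concretely, for a ray $\rho \in \Sigma^{(1)}$, in the valuation $v_\rho$ one must check that every local section $\omega \cdot \chi^m$ of $\Omega_{\sigma}^j$ with $\omega \in \bwedge^j(V_{\Gamma(m)})$ pulls back to something with $v_\rho \ge 0$; this is exactly the statement that $V_{\Gamma(m)}$ is spanned by the images of $1$-forms $d\chi^{m'}/\chi^{m'}$ with $m'$ in the face $\Gamma(m)$, each of which is log-regular, combined with the fact that $\chi^m$ itself is regular on $Y$ because $m \in \sigma \cap \msM \subseteq \sigma'^{\vee}\cap\msM$ for every maximal $\sigma' \in \Sigma$ (as $\Sigma$ subdivides $\sigma$). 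Wedging $j$ such log-regular forms against the regular function $\chi^m$ gives a regular $j$-form on each chart, and these patch because the construction is intrinsic (independent of the chart and expressed in the single graded $A$-module $(\bwedge^j V)\otimes_k A$ into which everything embeds). Taking global sections of the structure maps over $\Sigma$ yields the desired $\varphi^*((\Omega_X^j)^{\vee\vee}) \to \Omega_Y^j$, and it factors $\varphi^*\Omega_X^j \to \Omega_Y^j$ by construction since $\Omega_{\sigma}^j$ receives $\Omega_{X/k}^j$ compatibly.

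The main obstacle I anticipate is the bookkeeping at step two: verifying that for each cone $m$ with $\Gamma(m) = \tau$ a proper face, the subspace $V_\tau \subset V = \msM \otimes_{\mbZ} k$ appearing in Danilov's description really is generated by $d\log$ of monomials supported on $\tau$, and that these remain regular (not merely rational) along every new ray of the subdivision — one has to track the possibly subtle behavior of $V_\tau$ in positive characteristic, where $V_\tau$ is defined as an intersection of the codimension-one pieces rather than directly, so that $\dim_k V_\tau$ can in principle exceed the dimension of the linear span of $\tau$. However, since the subdivision $\Sigma$ only refines $\sigma$ and introduces no new faces transverse to the existing ones, each new ray $\rho$ lies in the relative interior of some face of $\sigma$, and the valuation $v_\rho$ is nonnegative on $\chi^m$ for all $m \in \sigma^{\vee}\!\!$— wait, rather on $\chi^m$ for $m \in \sigma \cap \msM$, which is what we need — so the positivity is automatic and the characteristic plays no essential role in the final inequality. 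Once this local positivity is in hand, reflexivity ($S_2$-ness) of $(\Omega_X^j)^{\vee\vee}$ promotes the codimension-one extension to a global one, and the lemma follows.
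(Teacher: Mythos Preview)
Your proposal has a genuine gap at the step you yourself flag as the ``main obstacle.'' You reduce to checking that $\omega\cdot\chi^m$ with $\omega\in\bwedge^j V_{\Gamma(m)}$ pulls back to a regular form along each new exceptional ray, and you propose to do this by writing $\omega$ as a wedge of log forms $d\chi^{m'}/\chi^{m'}$ with $m'\in\Gamma(m)$, then arguing that ``log-regular wedged against the regular $\chi^m$ is regular.'' Both halves of this are problematic. First, as you note, in positive characteristic the subspace $V_\tau\subset V$ is defined as an intersection of codimension-one pieces and can strictly contain the $k$-span of $\tau\cap\msM$; your last paragraph asserts that ``the characteristic plays no essential role'' but only actually verifies that $\chi^m$ is regular on $Y$, which says nothing about the form $\omega$. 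Second, even when $\omega$ \emph{is} such a wedge, the heuristic is false as stated: a single $d\chi^{m'}/\chi^{m'}$ has a simple pole along every torus-invariant divisor on which $\chi^{m'}$ vanishes, and multiplying by the regular $\chi^m$ does not automatically cancel these poles (e.g.\ $y\,dx/x$ on $\mbA^2$). What one actually needs is that along each ray $\rho$ the order of vanishing of $\chi^m$ dominates the pole order of $\omega$, and verifying this is precisely a combinatorial comparison of faces---which you never carry out. Your $S_2$ reduction is also slightly misapplied: it is the local freeness of $\Omega_Y^j$, not reflexivity of $(\Omega_X^j)^{\vee\vee}$ on $X$, that lets one test regularity in codimension one on $Y$.

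The paper avoids all of this. Working in the same affine-toric setup (with $\sigma'\supset\sigma$ in $\msM_{\mbR}$, since subdividing the fan in the dual lattice enlarges the character cones---note your $\Sigma$ should subdivide $\sigma^\vee$, not $\sigma$), the paper proves the purely combinatorial inclusion $V_{\Gamma_\sigma(m)}\subset V_{\Gamma_{\sigma'}(m)}$ for every $m\in\sigma\cap\msM$. The argument is two lines: if $m$ lies in the interior of $\sigma$ it lies in the interior of $\sigma'$; if $m$ lies on a facet of $\sigma'$ then the supporting hyperplane already cuts out a face of $\sigma$ containing $m$, so every codimension-one constraint defining $V_{\Gamma_{\sigma'}(m)}$ appears among those defining $V_{\Gamma_\sigma(m)}$. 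This inclusion of graded pieces immediately gives an $A'$-module map $\Omega_\sigma^j\otimes_A A'\to\Omega_{\sigma'}^j$ inside $(\bwedge^j V)\otimes_k A'$, which by Proposition~\ref{propdanilov} is the desired $\varphi^*((\Omega_X^j)^{\vee\vee})\to\Omega_Y^j$. No valuations, no $d\log$ interpretation, and the characteristic never enters.
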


\begin{proof}
A toric resolution $\varphi \colon Y \to X$ is obtained by subdividing the fan (in $\Hom_{\mbZ} (\msM,\mbZ)$) which defines $X$.
We may assume that both $X$ and $Y$ are affine toric varieties since this is a local problem.

Let $X = \Spec A$ and $A = k [\sigma \cap \msM]$, where $\msM$ is a lattice and $\sigma$ is a cone in $\msM_{\mbR}$ generating $\msM_{\mbR}$.
Then we may assume that $Y = \Spec k [\sigma' \cap \msM]$, where $\sigma'$ is a cone in $\msM_{\mbR}$ such that $\sigma' \supset \sigma$.
It suffices to show that $V_{\Gamma_{\sigma} (m)} \subset V_{\Gamma_{\sigma'} (m)}$ for every $m \in \sigma \cap \msM$.
Indeed, then, there is a natural homomorphism of $A' = k [\sigma' \cap \msM]$-modules
\[
\Omega^j_{\sigma} \otimes_A A' \to \Omega^j_{\sigma'},
\]
which, together with Proposition \ref{propdanilov} shows that there is a homomorphism $\varphi^* ((\Omega^j_X)^{\vee \vee}) \to \Omega^j_Y$ factoring $\varphi^*\Omega^j_X \to \Omega^j_Y$.

First, suppose that $m$ is contained contained in the interior of $\sigma \cap \msM$.
Then, $m$ is contained in the interior of $\sigma' \cap \msM$.
In this case we have $V_{\Gamma_{\sigma} (m)} = V_{\Gamma_{\sigma'} (m)}$ and they coincide with $V = \msM \otimes_{\mbZ} k$.
Suppose next that $m$ is contained in the boundary of $\sigma \cap \msM$.
Let $\tau$ be a codimension one face of $\sigma$ which contains $m$.
If $\tau$ is not contained in a face of $\sigma'$ then $m$ is contained in the interior of $\sigma' \cap \msM$.
Now recall that $V_{\Gamma_{\sigma} (m)}$ is the intersection of $V_{\tau}$, where $\tau$ runs over the codimension one faces of $\sigma$ which contain $m$.
Therefore we have $V_{\Gamma_{\sigma} (m)} \subset V_{\Gamma_{\sigma'} (m)}$, and the proof is completed.
\end{proof}

\begin{Rem} \label{rem:lift}
Let $x \in X$ be a germ of an isolated toric singularity and let $\varphi \colon Y \to X$ be a toric resolution of $x \in X$.
Lemma \ref{lem:toriclift} implies that the pullback via $\varphi$ of any differential $j$-form $\eta \in (\Omega_X^j)^{\vee \vee}$, viewed as a rational $j$-form, is a regular $j$-form on $Y$.
In particular, for a line bundle $\mcL \subset (\Omega_X^j)^{\vee \vee}$, we have $\varphi^* \mcL \subset \Omega_Y^j$.
\end{Rem}

\section{Quasi-smoothness in positive characteristic} \label{sec:qsm}

A simple characterization of quasi-smoothness of weighted complete intersections defined over an algebraically closed field of characteristic $0$ is given by Iano-Fletcher \cite{Fletcher} (see also \cite{Okada2} for a slight generalization), which is based on Bertini theorem.
The aim of this section is to give a quasi-smoothness criterion for weighted hypersurfaces in positive characteristics.
Although our argument is technically involved, it is primitive and avoids the use of Bertini theorem.


We introduce basic definitions.
Let $\mbP = \mbP (a_0,\dots,a_n)$ be a weighted projective space defined over an algebraically closed field $k$ with homogeneous coordinates $x_0,\dots,x_n$ of weight $a_0,\dots,a_n$, respectively.
We always assume that $\mbP$ is {\it well-formed}, that is, 
\[
\gcd (a_0,\dots,\hat{a}_i,\dots,a_n) = 1
\] 
for any $i$.

\begin{Def} \label{def:qsmwf}
Let $X$ be a closed subscheme of $\mbP$ and $\tau \colon \mbA^{n+1} \setminus \{o\} \to \mbP$ the natural projection.

We say that $X$ is {\it quasi-smooth} if the affine cone $C_X \subset \mbA^{n+1}$ of $X$, which is the closure of $\tau^{-1} (X)$ in $\mbA^{n+1}$, is smooth outside the origin $o$.
For a non-empty subset $S \subset \mbP$, we say that $X$ is {\it quasi-smooth along} $S$ if $C_X$ is smooth along $\overline{\tau^{-1} (S)} \subset \mbA^{n+1}$.

We say that $X$ is {\it well formed} if $\mbP$ is well formed and, for any $0 \le i < j \le n$ such that $\gcd \{a_0,\dots,\hat{a}_i,\dots,\hat{a}_j,\dots,a_n\} > 1$, $X$ does not contain the closed subset $(x_i = x_j = 0)$ of $\mbP$.
\end{Def} 

\begin{Rem} \label{rem:cl}
We note that for a quasi-smooth weighted complete intersection $X \subset \mbP (a_0,\dots,a_l)$ of dimension at least $3$, the Weil divisor class group $\Cl (X)$ is isomorphic to $\mbZ$ and is generated by a divisor (class) $A$ such that $\mcO_X (A) \cong \mcO_X (1)$.
Indeed, we have an exact sequence
\[
0 \to \mbZ \xrightarrow{\theta} \Cl (X) \to \Cl (R) \to 0,
\]
where $\theta (m) = m A$ and $R$ is the coordinate (graded) ring of the quasi-affine cone $C_X$ (see e.g.\ \cite[Theorem 1.6]{Watanabe}).
Now we have $\Cl (R) \cong \Cl (R_{\mfm})$, where $\mfm$ is the maximal ideal of the origin (see \cite[Corollary 10.3]{Fossum}). 
The latter is $0$ since $R_{\mfm}$ is a complete intersection local ring of dimension at least $4$ which is regular outside the maximal ideal (see \cite[Section 18]{Fossum}).
Thus $\Cl (X) = \mbZ \! \cdot \! A$.
\end{Rem}

In the rest of this section we assume that the ground field is an algebraically closed field $\K$ of positive characteristic $p$.

For a subset $I \subset \{0,1,\dots,n\}$, we define
\[
I_{\wt = 1} = \{\, i \in I \mid a_i = 1 \, \} \quad \text{and} \quad I_{\wt > 1} = I \setminus I_{\wt = 1}.
\]
We define
\[
\mbP^{\circ}_{\wt = 1} = \bigcup_{i \in \{0,\dots,n\}_{\wt = 1}} (x_i \ne 0),
\]
which is an open subset of $\mbP$.

\begin{Def}
For a non-empty subset $I = \{i_1,\dots,i_k\}$ of $\{0,\dots,n\}$, we define
\[
\Pi^*_{I,\mbP} = \left(\bigcap_{i \in I} (x_i \ne 0)\right) \cap \left(\bigcap_{j \notin I} (x_j = 0)\right) \subset \mbP,
\]
and call it the {\it coordinate stratum} of $\mbP$ with respect to $I$.
We denote by $\Pi_{I,\mbP}$ the closure of $\Pi^*_{I,\mbP}$.
\end{Def}

For $I = \{i_1,\dots,i_k\}$, we sometimes drop the subscript $\mbP$ and write $\Pi^*_{I,\mbP} = \Pi^*_I$, and also we write
\[
\Pi^*_{I,\mbP} = \Pi^*_{x_{i_1},\dots,x_{i_k}} \quad \text{and} \quad \Pi_{I,\mbP} = \Pi_{x_{i_1},\dots,x_{i_k}}.
\]

Let $I \subset \{0,\dots,n\}$ be a non-empty subset.
For a polynomial $h \in \K [x_0,\dots,x_n]$, we define $h|_{\Pi^*_I}$ to be the polynomial in variables $\{\,x_i \mid i \in I\,\}$ obtained by setting $x_j = 0$ in $h$ for all $j \in \{0,\dots,n\} \setminus I$.
For a matrix $M = (h_{ij})$ with entries $h_{ij} \in \K [x_0,\dots,x_n]$, we define $M|_{\Pi^*_I} = (h_{ij}|_{\Pi^*_I})$.

Let $\Lambda$ be a set of monomials in variables $x_0,\dots,x_n$.
For a ring $R$, we denote by $\langle \Lambda \rangle_R$ the free $R$-module generated by the monomials in $\Lambda$. 
In the following, we assume that $\Lambda$ is a set of monomials of the same weighted degree.
Then $\langle \Lambda \rangle_{\K} \subset H^0 (\mbP, \mcO_{\mbP} (d))$ is a $\K$-vector space.
We define $\mcL (\Lambda) \subset |\mcO_{\mbP} (d)|$ to be the linear system associated with $\langle \Lambda \rangle_{\K}$.
For subsets $\Xi = \{g_1,\dots,g_m\} \subset \Lambda$ and $J = \{l_1,\dots, l_k\} \subset \{0,\dots,n\}$, we define
\[
M_{\Xi, J} = \left( \frac{\prt \Xi}{\prt \{x_{l_1},\dots,x_{l_k}\}} \right) = \left( \frac{\prt g_j}{\prt x_{l_i}} \right)_{1 \le i \le k, 1 \le j \le m},
\]
and
\[
M'_{\Xi, J} = \left(\frac{\prt \Xi}{\prt \{x_{l_1},\dots,x_{l_k}\}}\right)' =
\begin{pmatrix}
g_1 & \cdots & g_m \\
& M_{\Xi, \{x_{l_1},\dots,x_{l_k}\}} &
\end{pmatrix}.
\]
We note that, while $M_{\Xi,J}$ is not defined when $J = \emptyset$, we define $M'_{\Xi,\emptyset} := (g_1 \ \cdots \ g_m)$.
We set $M_{\Lambda} = M_{\Lambda, \{0,\dots,n\}}$ and $M'_{\Lambda} = M'_{\Lambda,\{0,\dots,n\}}$. 
We will sometimes write $M_{\Xi, \{x_{l_1},\dots,x_{l_k}\}}$ and $M_{\Xi, \{x_{l_1},\dots,x_{l_k}\}}$ instead of $M_{\Xi,J}$ and $M'_{\Xi,J}$.

\subsection{A basic criterion}

Let $\mbP = \mbP (a_0,\dots,a_n)$ be a weighted projective space with homogeneous coordinates $x_0,\dots,x_n$ of weight $a_0,\dots,a_n$, respectively, and $\Lambda$ a set of monomials in $x_0,\dots,x_n$ of weighted degree $d$.

\begin{Lem} \label{lem:qsmcri1}
Let $I \subset \{0,\dots,n\}$ be a non-empty subset.
Suppose that $\rank M'_{\Lambda} (\msp) \ge |I|$ for any point $\msp \in \Pi^*_{I,\mbP}$.
Then a general member $X \in \mcL (\Lambda)$ is quasi-smooth along $\Pi^*_{I,\mbP}$.
\end{Lem}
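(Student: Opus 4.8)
The plan is to work on the affine cone and use a parameter-count / incidence-variety argument. Fix the notation: let $C_{\mbP} = \mbA^{n+1}_{\K}$ with coordinates $x_0,\dots,x_n$, let $\tau\colon \mbA^{n+1}\setminus\{o\}\to\mbP$ be the projection, and recall that for $X\in\mcL(\Lambda)$ the affine cone $C_X$ is the zero locus of the defining polynomial $F = \sum_{g\in\Lambda}\lambda_g\, g$. For a point $\msp\in\overline{\tau^{-1}(\Pi^*_{I,\mbP})}$, the cone $C_X$ fails to be smooth at $\msp$ precisely when $F(\msp)=0$ and all partials $\prt F/\prt x_0(\msp)=\dots=\prt F/\prt x_n(\msp)=0$ vanish. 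Because $X$ is a weighted hypersurface, the Euler relation $\sum_i a_i x_i\,\prt F/\prt x_i = d\,F$ shows that on the cone $F(\msp)=0$ is automatic once all partials vanish (in characteristic $p$ one must be slightly careful, but the locus where $X$ is \emph{not} quasi-smooth is contained in the common zero locus of the partials, which is all we need). So it suffices to show: for general $\lambda = (\lambda_g)$, the system $\prt F/\prt x_j(\msp)=0$, $j=0,\dots,n$, has no solution $\msp$ with $\tau(\msp)\in\Pi^*_{I,\mbP}$.

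**Key steps.** First I would form the incidence variety
\[
\mcZ = \{\, (\msp,\lambda) \mid \msp\in\overline{\tau^{-1}(\Pi^*_{I,\mbP})},\ \prt F_\lambda/\prt x_j(\msp)=0\ \text{for all }j\,\}
\subset \overline{\tau^{-1}(\Pi^*_{I,\mbP})}\times\langle\Lambda\rangle_{\K},
\]
and study the first projection $\mathrm{pr}_1\colon\mcZ\to\overline{\tau^{-1}(\Pi^*_{I,\mbP})}$. Fix $\msp\in\tau^{-1}(\Pi^*_{I,\mbP})$; then $\msp$ has all coordinates $x_i$ with $i\notin I$ equal to zero and the rest nonzero. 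Restricting a monomial $g\in\Lambda$ to $\Pi^*_I$ kills those involving $x_j$, $j\notin I$; what survives of the linear-in-$\lambda$ system $\prt F_\lambda/\prt x_j(\msp)=0$ is exactly the linear map whose matrix (in the $\lambda_g$) is the evaluation at $\msp$ of $M_{\Lambda,\{0,\dots,n\}}$ after the substitution defining $\Pi^*_I$. The hypothesis $\rank M'_{\Lambda}(\msp)\ge |I|$ — more precisely, combined with the Euler relations which express the rows $\prt g/\prt x_i$ for $i\notin I$ and recover the row of $g$'s themselves — forces the fiber $\mathrm{pr}_1^{-1}(\msp)$ to be a linear subspace of $\langle\Lambda\rangle_{\K}$ of codimension at least $|I|$. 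Since $\overline{\tau^{-1}(\Pi^*_{I,\mbP})}$ has dimension $|I|$, a dimension count gives
\[
\dim\mcZ \le |I| + \bigl(\dim\langle\Lambda\rangle_{\K} - |I|\bigr) = \dim\langle\Lambda\rangle_{\K},
\]
so the second projection $\mathrm{pr}_2\colon\mcZ\to\langle\Lambda\rangle_{\K}$ cannot be dominant unless it is generically finite; in either case its image is contained in a proper closed subset, hence for $\lambda$ outside that subset the corresponding $X$ is quasi-smooth along $\Pi^*_{I,\mbP}$.

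**The main obstacle.** The delicate point — and the one I would spend the most care on — is the passage from the rank condition on $M'_{\Lambda}$ to the codimension bound on the fiber $\mathrm{pr}_1^{-1}(\msp)$. The primed matrix $M'_{\Lambda}$ adds the row $(g_1\ \cdots\ g_m)$ to $M_{\Lambda}$, and one has to check that, at a point of $\Pi^*_{I,\mbP}$, this extra row plus the rows $\prt g/\prt x_i$ for $i\in I$ already contain enough independent linear conditions: here the Euler relation $\sum_{i\in I} a_i x_i\,\prt g/\prt x_i = d\,g$ holds \emph{after restriction to} $\Pi^*_I$ since the other variables vanish, which lets one trade the $g$-row against the $I$-rows, and conversely the rows $\prt g/\prt x_j$ for $j\notin I$ may be nonzero and must also be shown to impose no obstruction to (i.e.\ not to reduce the codimension of) the vanishing locus — this is why $M'$ rather than $M$ appears. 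In positive characteristic one must additionally avoid the subtlety that $\prt(x_i^{p})/\prt x_i = 0$; but since we only evaluate the linear-in-$\lambda$ system and never differentiate with respect to $p$-th powers of the $\lambda_g$, this causes no trouble. Once the codimension-$\ge|I|$ statement for the fibers is in hand, the rest is the standard semicontinuity/dimension-counting argument sketched above, and the conclusion "general member" follows by taking the complement of $\overline{\mathrm{pr}_2(\mcZ)}$ in $\langle\Lambda\rangle_{\K}$, equivalently in $\mcL(\Lambda)$.
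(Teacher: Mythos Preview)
Your overall strategy---form an incidence variety and count dimensions---is the same as the paper's, but your execution has a genuine gap at exactly the point you flagged as the ``main obstacle.''

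You define $\mcZ$ by the vanishing of the partials only, and then try to deduce that the fibre $\mathrm{pr}_1^{-1}(\msp)$ has codimension at least $|I|$ from the hypothesis $\rank M'_{\Lambda}(\msp)\ge |I|$ via the Euler relation. This step fails when $p\mid d$: in that case the Euler relation reads $\sum_{i\in I} a_i x_i\,\partial g/\partial x_i = d\,g = 0$ on $\Pi^*_I$, which gives a dependency among the partial-derivative rows and does \emph{not} let you recover the value row $(g_1,\dots,g_m)$ from them. Concretely, take $n=1$, $p=2$, $d=2$, $\Lambda=\{x_0^2,x_1^2\}$ and $\msp=(1\!:\!0)$: then $M'_{\Lambda}(\msp)$ has rank $1=|I|$, but $M_{\Lambda}(\msp)=0$, so your fibre is all of $\langle\Lambda\rangle_{\K}$ and your codimension bound is false. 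Since the applications in the paper very often have $p\mid d$, this is not a corner case.

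The fix is immediate and is exactly what the paper does: define the fibre over $\msp$ as the kernel of the linear map $\langle\Lambda\rangle_{\K}\to\K^{n+2}$ given by $M'_{\Lambda}(\msp)$ itself. That kernel is precisely the set of $f$ with $f(\msp)=0$ and all $\partial f/\partial x_i(\msp)=0$, i.e.\ exactly the set of $f$ for which $(f=0)$ is not quasi-smooth at $\msp$. The rank hypothesis then gives codimension $\ge |I|$ directly, with no Euler relation needed.

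A secondary slip: working on the affine cone, the stratum has dimension $|I|$, so even with the correct codimension bound you only get $\dim\mcZ\le\dim\langle\Lambda\rangle_{\K}$, and your sentence ``cannot be dominant unless generically finite; in either case its image is contained in a proper closed subset'' is false---a generically finite dominant map has dense image. This is easily repaired: either note that every nonempty fibre of $\mathrm{pr}_2$ contains a $\mbG_m$-orbit and is hence positive-dimensional, or (as the paper does) simply work on the projective stratum $\Pi^*_{I,\mbP}$ of dimension $|I|-1$ from the start.
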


\begin{proof}
We identify $W = \langle \Lambda \rangle_{\K}$ with $\K^{\lambda}$ via the basis $\Lambda$, where $\lambda = |\Lambda|$.
Then, for a point $\msp \in \Pi^*_I$, the kernel, denoted by $W_{\msp}$, of the map $W \cong \K^{\lambda} \to \K^{n+2}$ defined by the matrix $M'_{\Lambda} (\msp)$ is precisely the set of polynomials $f \in W$ such that $(f = 0) \in \mcL (\Lambda)$ is not quasi-smooth at $\msp$.
By the assumption $\rank M'_{\Lambda} (\msp) \ge |I|$, the codimension of $W_{\msp}$ in $W$ is at least $|I|$.
Then, by counting dimension keeping in mind that $\dim \Pi^*_{I,\mbP} = |I|-1$, we see that a general member of $\mcL (\Lambda)$ is quasi-smooth along $\Pi^*_{I,\mbP}$.
\end{proof}

\begin{Def}
For a non-empty subset $I \subset \{0,\dots,n\}$, we say that $\Lambda$ satisfies condition $(\ast)_{I,\mbP}$ (resp.\ $(\ast)'_{I,\mbP}$) if there are a subset $\Xi \subset \Lambda$ with $|\Xi| = |I|$ and a subset $J \subset \{0,\dots,n\}$ with $|J| = |I|$ (resp.\ $|J| = |I|-1$) such that 
\[
\det (M_{\Xi,J})|_{\Pi^*_{I,\mbP}} \quad (\text{resp.\ $\det (M'_{\Xi,J})|_{\Pi^*_{I,\mbP}}$}) 
\]
is a non-zero monomial.
We say that $\Lambda$ satisfies $(\dagger)_{I,\mbP}$ if it satisfies either $(*)_{I,\mbP}$ or $(*)'_{I,\mbP}$.
\end{Def}

It is clear from the above definition that if $\Lambda$ satisfies $(*)_{I,\mbP}$, $(*)'_{I,\mbP}$ or $(\dagger)_{I,\mbP}$, then $\Lambda'$ satisfies $(*)_{I,\mbP}$, $(*)'_{I,\mbP}$ or $(\dagger)_{I,\mbP}$, respectively, for any set $\Lambda'$ of monomials in $x_0,\dots,x_n$ containing $\Lambda$.

\begin{Lem} \label{lem:qsmcri2}
Let $I \subset \{0,\dots,n\}$ be a non-empty subset.
Suppose that $\Lambda$ satisfies $(\dagger)_{I,\mbP}$.
Then a general member of $\mcL (\Lambda)$ is quasi-smooth along $\Pi^*_{I,\mbP}$.
\end{Lem}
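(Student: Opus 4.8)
The plan is to reduce Lemma \ref{lem:qsmcri2} to the basic criterion Lemma \ref{lem:qsmcri1}, i.e.\ to show that if $\Lambda$ satisfies $(\dagger)_{I,\mbP}$ then $\rank M'_{\Lambda}(\msp) \ge |I|$ for every $\msp \in \Pi^*_{I,\mbP}$. Write $|I| = k$ and recall that $M'_{\Lambda}$ is the matrix obtained from $M_{\Lambda}$ by adjoining the extra row $(g : g \in \Lambda)$ of the monomials themselves (with $M_\Lambda = M_{\Lambda,\{0,\dots,n\}}$ the Jacobian matrix of all the monomials in $\Lambda$). Since passing to a larger set of monomials or a larger set of coordinate columns only increases the rank, it suffices to exhibit, for each $\msp \in \Pi^*_{I,\mbP}$, a $k \times k$ submatrix of $M'_{\Lambda}(\msp)$ that is invertible.

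First I would handle the case that $\Lambda$ satisfies $(\ast)_{I,\mbP}$. Then there are $\Xi \subset \Lambda$ with $|\Xi| = k$ and $J \subset \{0,\dots,n\}$ with $|J| = k$ such that $\det(M_{\Xi,J})|_{\Pi^*_{I,\mbP}}$ is a non-zero monomial $c\,x^{\mathbf a}$, where $c \in \K^{\times}$ and the monomial $x^{\mathbf a}$ involves only the variables $x_i$, $i \in I$. Since every such variable is invertible at any point $\msp \in \Pi^*_{I,\mbP}$, this monomial is non-zero at $\msp$, so $\det(M_{\Xi,J}(\msp)) = \det(M_{\Xi,J})|_{\Pi^*_{I,\mbP}}(\msp) \ne 0$. (Here one uses that setting $x_j = 0$ for $j \notin I$ commutes with evaluation at $\msp \in \Pi^*_{I,\mbP}$.) Thus $M_{\Xi,J}(\msp)$ is an invertible $k \times k$ submatrix of $M_{\Lambda,\{0,\dots,n\}}(\msp)$, a fortiori of $M'_{\Lambda}(\msp)$, so $\rank M'_{\Lambda}(\msp) \ge k = |I|$. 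The case $(\ast)'_{I,\mbP}$ is entirely analogous: here $|J| = k-1$ and $\det(M'_{\Xi,J})|_{\Pi^*_{I,\mbP}}$ is a non-zero monomial in the $x_i$, $i \in I$; evaluating at $\msp \in \Pi^*_{I,\mbP}$ gives a non-zero value, so the $k\times k$ matrix $M'_{\Xi,J}(\msp)$ — which is a submatrix of $M'_{\Lambda}(\msp)$, consisting of the extra row together with $k-1$ Jacobian rows restricted to the columns $\Xi$ — is invertible, whence again $\rank M'_{\Lambda}(\msp) \ge |I|$. Since $(\dagger)_{I,\mbP}$ means one of $(\ast)_{I,\mbP}$ or $(\ast)'_{I,\mbP}$ holds, in all cases the hypothesis of Lemma \ref{lem:qsmcri1} is verified, and that lemma gives that a general member of $\mcL(\Lambda)$ is quasi-smooth along $\Pi^*_{I,\mbP}$.

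The only real point requiring care — the potential obstacle — is the interplay between the operation $(-)|_{\Pi^*_{I,\mbP}}$ (which kills the variables $x_j$, $j \notin I$) and evaluation at a point of $\Pi^*_{I,\mbP}$: one must check that for $\msp \in \Pi^*_{I,\mbP}$ and any polynomial $h$, $h(\msp) = (h|_{\Pi^*_I})(\msp)$, and that $\det$ of a restricted matrix equals the restriction of $\det$ of the matrix (immediate since $\det$ is a polynomial in the entries, and restriction is a ring homomorphism). Granting this, the non-vanishing of the monomial $\det(M_{\Xi,J})|_{\Pi^*_{I,\mbP}}$ at every point of $\Pi^*_{I,\mbP}$ is exactly the condition that all the coordinates $x_i$, $i \in I$, are non-zero there, which is how $\Pi^*_{I,\mbP}$ is defined. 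Everything else is bookkeeping about submatrices and monotonicity of rank, so I expect the proof to be short.
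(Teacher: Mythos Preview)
Your proof is correct and follows essentially the same approach as the paper's own proof: both observe that $\det(M_{\Xi,J})|_{\Pi^*_{I,\mbP}}$ (resp.\ $\det(M'_{\Xi,J})|_{\Pi^*_{I,\mbP}}$) is a $k\times k$ minor of $M'_{\Lambda}|_{\Pi^*_{I,\mbP}}$ that is nowhere-vanishing on $\Pi^*_{I,\mbP}$, so $\rank M'_{\Lambda}(\msp)\ge |I|$ at every $\msp\in\Pi^*_{I,\mbP}$, and then invoke Lemma~\ref{lem:qsmcri1}. Your version is a bit more explicit about why a non-zero monomial in the $x_i$, $i\in I$, is invertible on $\Pi^*_{I,\mbP}$ and about the compatibility of restriction with evaluation, but the argument is the same.
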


\begin{proof}
Put $\Pi^* = \Pi^*_{I,\mbP}$.
We see that $\det (M_{\Xi,J})|_{\Pi^*}$ and $\det (M'_{\Xi, J})|_{\Pi^*}$ are both $k \times k$ minors of $M'_{\Lambda}|_{\Pi^*}$.
The conditions $(*)_{I,\mbP}$ and $(*)'_{I,\mbP}$ imply that $\det (M_{\Xi,J}|_{\Pi^*}) (\msp) \ne 0$ and $\det (M'_{\Xi, J}|_{\Pi^*}) (\msp) \ne 0$ for any $\msp \in \Pi^*$, respectively.
Thus $\rank (M'_{\Lambda} (\msp)) \ge |I|$ for any $\msp \in \Pi^*$ and the assertion follows from Lemma \ref{lem:qsmcri1}.
\end{proof}

\subsection{Quasi-smoothness of special weighted hypersurfaces I}

Let $\mbP = \mbP (a_0,\dots,a_n)$ and $\tilde{\mbP} = \mbP (a_0,\dots,a_n,b)$ be weighted projective spaces with homogeneous coordinates $x_0,\dots,x_n$ and $x_0,\dots,x_n,w$ of weight $a_0,\dots,a_n$ and $a_0,\dots,a_n,b$, respectively.
Let $d$ be a positive integer divisible by $b$ and we set $m = d/b$.
We assume that $m$ is divisible by $p$.
The aim of this subsection is to make the quasi-smoothness criterion Lemma \ref{lem:qsmcri2} simpler for a general weighted hypersurface in $\tilde{\mbP}$ defined by an equation of the form $w^m + f (x_0,\dots,x_n) = 0$.
Let $\Lambda$ be a set of monomials of weighted degree $d = m b$ in variables $x_0,\dots,x_n$.
Note that $\Lambda$ does not contain a monomial involving $w$.
Note also that we think of $w$ as the $(n+1)$th coordinate $x_{n+1}$, so that, for example, we have
\[
\Pi^*_{\{0,\dots,n+1\}, \tilde{\mbP}} = \left(\bigcap_{i=0}^n (x_i \ne 0) \right) \cap (w \ne 0).
\]

\begin{Lem} \label{lem:qsmstdag}
Let $I \subset \{0,\dots,n\}$ be a non-empty subset.
If $\Lambda$ satisfies $(*)_{I,\mbP}$, then $\Lambda \cup \{w^m\}$ satisfies both $(\dagger)_{I,\tilde{\mbP}}$ and $(\dagger)_{I \cup \{n+1\},\tilde{\mbP}}$.
\end{Lem}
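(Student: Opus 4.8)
The plan is to establish both statements by writing down explicit witnessing data, using only that $w^m$ is a monomial of weighted degree $d=mb$ (so $\Lambda\cup\{w^m\}$ is again a set of monomials of a single weighted degree, and the conditions $(\ast)$, $(\ast)'$, $(\dagger)$ make sense for it) and that no monomial of $\Lambda$ involves $w$. So fix, by $(\ast)_{I,\mbP}$, a subset $\Xi=\{g_1,\dots,g_k\}\subset\Lambda$ and a subset $J\subset\{0,\dots,n\}$ with $k=|I|=|\Xi|=|J|$ such that $\det(M_{\Xi,J})|_{\Pi^*_{I,\mbP}}$ is a nonzero monomial.

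For $(\dagger)_{I,\tilde{\mbP}}$ I would take the very same $\Xi$ and $J$, regarded now inside $\tilde{\mbP}$. Because $g_1,\dots,g_k$ involve only $x_0,\dots,x_n$ and $J\subset\{0,\dots,n\}$, the matrix $M_{\Xi,J}$ computed in $\tilde{\mbP}$ is literally the one computed in $\mbP$; and because $\Pi^*_{I,\tilde{\mbP}}\subset(w=0)$, restricting any polynomial in the $x_i$ to $\Pi^*_{I,\tilde{\mbP}}$ gives the same polynomial as restricting it to $\Pi^*_{I,\mbP}$. Hence $\det(M_{\Xi,J})|_{\Pi^*_{I,\tilde{\mbP}}}$ is again a nonzero monomial, so $\Lambda\cup\{w^m\}$ satisfies $(\ast)_{I,\tilde{\mbP}}$ and in particular $(\dagger)_{I,\tilde{\mbP}}$.

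For $(\dagger)_{I\cup\{n+1\},\tilde{\mbP}}$ put $I'=I\cup\{n+1\}$, so $|I'|=k+1$; I would verify $(\ast)'_{I',\tilde{\mbP}}$ with $\Xi'=\Xi\cup\{w^m\}$ (size $k+1=|I'|$) and $J'=J$ (size $k=|I'|-1$). Then $M'_{\Xi',J'}$ is the $(k+1)\times(k+1)$ matrix whose first row is $(g_1,\dots,g_k,w^m)$ and whose lower $k\times(k+1)$ block $M_{\Xi',J'}$ has its first $k$ columns equal to $M_{\Xi,J}$ and its last column identically zero, since $\prt w^m/\prt x_l=0$ for every $l\in J\subset\{0,\dots,n\}$. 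Expanding $\det(M'_{\Xi',J'})$ along this last column yields $\det(M'_{\Xi',J'})=\pm\,w^m\cdot\det(M_{\Xi,J})$. Restricting to $\Pi^*_{I',\tilde{\mbP}}$ leaves $w^m$ untouched (as $\Pi^*_{I',\tilde{\mbP}}\subset(w\neq0)$) and sends $\det(M_{\Xi,J})$, a polynomial in the $x_i$, to $\det(M_{\Xi,J})|_{\Pi^*_{I,\mbP}}$ (because $\Pi^*_{I',\tilde{\mbP}}$ imposes $x_j=0$ for exactly those $j\notin I$). Thus $\det(M'_{\Xi',J'})|_{\Pi^*_{I',\tilde{\mbP}}}$ is $\pm w^m$ times a nonzero monomial, hence a nonzero monomial, which is precisely $(\ast)'_{I',\tilde{\mbP}}$, whence $(\dagger)_{I',\tilde{\mbP}}$.

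The argument is entirely formal, so the one place where care is needed — and where a slip would be easiest — is the cardinality and shape bookkeeping: that $(\ast)'$ pairs an $|I|$-element set of polynomials with an $(|I|-1)$-element set of variables while the bordered matrix $M'$ reacquires a row, and that passing to a coordinate stratum commutes both with taking determinants and with deleting the $w$-column. No geometry and no characteristic-$p$ hypothesis enters this step; in particular neither $n\ge3$ nor $p\mid m$ is used here.
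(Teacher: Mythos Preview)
Your proof is correct and follows the same approach as the paper's: verify $(\ast)_{I,\tilde{\mbP}}$ with the original $\Xi,J$, and verify $(\ast)'_{I\cup\{n+1\},\tilde{\mbP}}$ with $\Xi\cup\{w^m\}$ and the same $J$, expanding the bordered determinant along the $w^m$-column.

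One remark worth making: you assert at the end that $p\mid m$ plays no role, and you are right---your column expansion uses only that $\partial w^m/\partial x_l=0$ for $l\in J\subset\{0,\dots,n\}$, which holds simply because $w^m$ involves none of the $x_l$. The paper, however, prefaces the identical determinant identity with ``Since $m$ is divisible by $p$''. This invocation is superfluous for the argument as written (it would be relevant only if one tried instead to enlarge $J$ by adjoining $n+1$ and use $\partial w^m/\partial w = m w^{m-1}$, which is not what either proof does). So your observation is a small sharpening of the paper's presentation, not a deviation from its strategy.
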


\begin{proof}
By the assumption, there are subsets $\Xi \subset \Lambda$ and $J \subset \{0,\dots,n\}$ such that $|\Xi| = |J| = |I|$ and $\det (M_{\Xi,J})|_{\Pi^*_{I,\mbP}}$ is a non-zero monomial.
It is obvious that $\Lambda \cup \{w^m\}$ satisfies $(*)_{I,\tilde{\mbP}}$ and hence $(\dagger)_{I,\tilde{\mbP}}$.
Since $m$ is divisible by $p$, we have
\[
\det \left(M'_{\Xi \cup \{n+1\}, J} \right)|_{\Pi^*_{I \cup \{n+1\}, \tilde{\mbP}}} 
= \pm w^m \det \left( M_{\Xi, J} \right)|_{\Pi^*_{I,\mbP}}.
\]
This shows that $\Lambda \cup \{w^m\}$ satisfies $(*)'_{I \cup \{n+1\},\tilde{\mbP}}$ and hence $(\dagger)_{I \cup \{n+1\},\tilde{\mbP}}$.
\end{proof}

\begin{Lem} \label{lem:qsmcritypeI}
Let $I \subset \{0,\dots,n\}$ be a non-empty subset.
If $\Lambda$ satisfies $(*)_{I',\mbP}$ for any non-empty subset $I' \subset I$, then the weighted hypersurface in $\tilde{\mbP}$ defined by $w^m + f = 0$ is quasi-smooth along $\Pi_{I \cup \{n+1\},\tilde{\mbP}}$ for a general $f \in \langle \Lambda \rangle_{\K}$.
\end{Lem}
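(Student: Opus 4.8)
The plan is to reduce the statement to quasi-smoothness along the individual torus orbits making up the closed stratum $\Pi_{I\cup\{n+1\},\tilde{\mbP}}$ and to treat each orbit via the criterion of Lemma~\ref{lem:qsmcri2}. Write $K=I\cup\{n+1\}$. The closure of $\Pi^*_{K,\tilde{\mbP}}$ is the coordinate subspace $(x_j=0,\ j\notin K)$ of $\tilde{\mbP}$, which decomposes as the disjoint union of the strata $\Pi^*_{J,\tilde{\mbP}}$ over the non-empty subsets $J\subseteq K$. Since the affine-cone preimage of a finite union is the union of the preimages and ``smooth along'' distributes over finite unions, quasi-smoothness along $\Pi_{K,\tilde{\mbP}}$ is equivalent to quasi-smoothness along each $\Pi^*_{J,\tilde{\mbP}}$. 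There are only finitely many such $J$, so it suffices to produce, for each non-empty $J\subseteq K$, a dense open subset of $\langle\Lambda\rangle_{\K}$ over which $(w^m+f=0)$ is quasi-smooth along $\Pi^*_{J,\tilde{\mbP}}$, and then intersect.

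Next I would dispose of the single stratum not governed by the hypothesis, namely $J=\{n+1\}$. The stratum $\Pi^*_{\{n+1\},\tilde{\mbP}}$ is the coordinate point $[0\colon\cdots\colon0\colon1]$, and since every monomial of $f$ has positive degree in $x_0,\dots,x_n$, substituting this point into $w^m+f$ yields $1\neq0$; hence the point lies on no member of the family. Consequently $\overline{\tau^{-1}(\Pi^*_{\{n+1\},\tilde{\mbP}})}$, which is the $w$-axis, meets $C_X$ only at the origin $o$, and quasi-smoothness along $\Pi^*_{\{n+1\},\tilde{\mbP}}$ holds vacuously for every $f$.

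For the remaining strata, set $J'=J\setminus\{n+1\}$. Then $J'$ is a non-empty subset of $I$ (it equals $J$ when $n+1\notin J$), so by hypothesis $\Lambda$ satisfies $(*)_{J',\mbP}$. Applying Lemma~\ref{lem:qsmstdag} with $I'=J'$, the set $\Lambda\cup\{w^m\}$ (again a set of monomials of weighted degree $d$) satisfies both $(\dagger)_{J',\tilde{\mbP}}$ and $(\dagger)_{J'\cup\{n+1\},\tilde{\mbP}}$; in either case $J=J'$ or $J=J'\cup\{n+1\}$ we conclude $(\dagger)_{J,\tilde{\mbP}}$. By Lemma~\ref{lem:qsmcri2} a general member of $\mcL(\Lambda\cup\{w^m\})$ is then quasi-smooth along $\Pi^*_{J,\tilde{\mbP}}$, i.e.\ this holds over a dense open subset of $\mcL(\Lambda\cup\{w^m\})=\mbP(\langle\Lambda\cup\{w^m\}\rangle_{\K})$. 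Finally I would observe that $\{\,[w^m+f]\mid f\in\langle\Lambda\rangle_{\K}\,\}$ is precisely the affine chart of this projective space where the coefficient of $w^m$ is non-zero, hence is open and dense; its intersection with the above dense open locus gives a dense open set of $f\in\langle\Lambda\rangle_{\K}$ for which $(w^m+f=0)$ is quasi-smooth along $\Pi^*_{J,\tilde{\mbP}}$. Intersecting over the finitely many $J\subseteq K$ completes the proof.

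The orbit decomposition and the density of the affine chart are routine; the only point requiring a moment's care is the stratum $J=\{n+1\}$, which is not controlled by any $(*)_{I',\mbP}$ but is harmless because the coordinate point $[0\colon\cdots\colon0\colon1]$ lies off every hypersurface of the family. I do not expect a genuine obstacle; essentially all of the work is bookkeeping of which strata of the closure must be handled and quoting Lemmas~\ref{lem:qsmstdag} and~\ref{lem:qsmcri2} with the correct index sets.
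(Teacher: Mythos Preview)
Your proof is correct and follows essentially the same approach as the paper: decompose $\Pi_{I\cup\{n+1\},\tilde{\mbP}}$ into open strata and apply Lemmas~\ref{lem:qsmstdag} and~\ref{lem:qsmcri2} to each. You are in fact more careful than the paper in explicitly treating the stratum $J=\{n+1\}$ (which the paper's terse decomposition glosses over) and in spelling out the passage from a general member of $\mcL(\Lambda\cup\{w^m\})$ to a general $f\in\langle\Lambda\rangle_{\K}$.
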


\begin{proof}
We have
\[
\Pi_{I \cup \{n+1\},\tilde{\mbP}} = \left( \bigcup_{I' \subset I} \Pi^*_{I',\tilde{\mbP}} \right) \cup \left( \bigcup_{I' \subset I} \Pi^*_{I' \cup \{n+1\},\tilde{\mbP}} \right).
\]
It follows from Lemmas \ref{lem:qsmstdag} and \ref{lem:qsmcri2} that a general member of $\mcL (\Lambda \cup \{w^m\})$ is quasi-smooth along $\Pi_{I,\tilde{\mbP}}$, and the proof is completed. 
\end{proof}

The following gives an easy criterion for the condition $(*)_{I,\mbP}$ for $I \subset \{0,\dots,n\}$ with $|I| \le 2$.

\begin{Lem} \label{lem:qsmast}
Let $\Lambda$ be a set of monomials of degree $d$ in variables $x_0,\dots,x_n$.
\begin{enumerate}
\item For $i \in \{0,\dots,n\}$, $\Lambda$ satisfies $(*)_{{\{i\}}, \mbP}$ if and only if ether $x_i^k \in \Lambda$ for some $k$ with $p \nmid k$ or $x_i^l x_j$ for some $j \ne i$ and $l$.
\item For distinct $i_1, i_2 \in \{0,\dots,n\}$, $\Lambda$ satisfies $(*)_{\{i_1,i_2\},\mbP}$ if one of the following holds.
\begin{enumerate}
\item $x_{i_1}^{k_1}, x_{i_2}^{k_2} \in \Lambda$ for some $k_1,k_2$ such that $p \nmid k_1 k_2$.
\item $x_{i_1}^{k_1} x_{i_2}^{l_2}, x_{i_2}^{k_2} \in \Lambda$ for some $l, k_1, k_2$ such that $p \nmid k_1 k_2$.
\item $x_{i_1}^{k_1} x_{i_2}^{p l_2}, x_{i_1}^{l_1} x_{i_2}^{k_2} \in \Lambda$ for some $l_1,l_2,k_1,k_2$ such that $p \nmid k_1 k_2$.
\item $x_{i_1}^{l_1} x_{i_2}^{l_2} x_j, x_{i_1}^m x_{i_2}^{k} \in \Lambda$ for some $l_1, l_2, m, k$ such that $p \nmid k$ and $j \notin \{i_1,i_2\}$.
\item $x_{i_1}^{l_1} x_{i_2}^{l_2} x_{j_1}, x_{i_1}^{m_1} x_{i_2}^{m_2} x_{j_2} \in \Lambda$ for some $l_1, l_2, m_1, m_2$ and distinct $j_1, j_2 \notin \{i_1,i_2\}$.
\end{enumerate}
\end{enumerate}
\end{Lem}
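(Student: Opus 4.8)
The plan is to unwind the definition of condition $(*)_{I,\mbP}$ in each case: we must exhibit a subset $\Xi \subset \Lambda$ with $|\Xi| = |I|$ and a subset $J \subset \{0,\dots,n\}$ with $|J| = |I|$ such that $\det(M_{\Xi,J})|_{\Pi^*_{I,\mbP}}$ is a non-zero monomial. For $|I| = 1$, write $I = \{i\}$; then $\Xi = \{g\}$ and $J = \{l\}$ for a single monomial $g$ and a single variable $x_l$, and the condition is simply that $(\prt g/\prt x_l)|_{\Pi^*_{\{i\},\mbP}}$ is a non-zero monomial, i.e.\ a non-zero monomial in $x_i$ alone. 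First I would check that if $x_i^k \in \Lambda$ with $p \nmid k$, taking $g = x_i^k$ and $l = i$ gives $\prt g/\prt x_i = k x_i^{k-1}$, which survives the restriction and is non-zero in $\K$ since $p \nmid k$; and if $x_i^l x_j \in \Lambda$ with $j \ne i$, taking $l$-index $= j$ gives $\prt(x_i^l x_j)/\prt x_j = x_i^l$, again a non-zero monomial after restricting (it involves only $x_i$). For the converse, I would observe that for $\det(M_{\{g\},\{l\}})|_{\Pi^*_{\{i\},\mbP}} = (\prt g/\prt x_l)|_{\Pi^*_{\{i\},\mbP}}$ to be a non-zero monomial, $g$ restricted to $\Pi^*_{\{i\},\mbP}$ must be divisible by $x_l$ (so $\prt/\prt x_l$ is non-zero there), forcing $g|_{\Pi^*_{\{i\},\mbP}}$ to be a non-zero monomial in $x_i$; hence $g$ is either a pure power $x_i^k$ (and then $p \nmid k$ is needed for the derivative to survive in characteristic $p$) or of the form $x_i^l x_j$ with the derivative taken in $x_j$ (any $l$ works, including $l$ divisible by $p$, since we differentiate the linear factor).

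For $|I| = 2$, write $I = \{i_1, i_2\}$; we need $\Xi = \{g_1, g_2\} \subset \Lambda$ and $J = \{l_1, l_2\}$ so that the $2 \times 2$ determinant
\[
\det \begin{pmatrix} \prt g_1/\prt x_{l_1} & \prt g_2/\prt x_{l_1} \\ \prt g_1/\prt x_{l_2} & \prt g_2/\prt x_{l_2} \end{pmatrix}\Bigg|_{\Pi^*_{\{i_1,i_2\},\mbP}}
\]
is a non-zero monomial. In case (a), take $g_1 = x_{i_1}^{k_1}$, $g_2 = x_{i_2}^{k_2}$ and $J = \{i_1, i_2\}$: the matrix becomes diagonal after restriction, with entries $k_1 x_{i_1}^{k_1 - 1}$ and $k_2 x_{i_2}^{k_2 - 1}$, whose product is non-zero since $p \nmid k_1 k_2$. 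In case (b), take $g_1 = x_{i_1}^l x_j$ with $j \notin \{i_1, i_2\}$, $g_2 = x_{i_2}^k$, and $J = \{j, i_2\}$: after restricting to $\Pi^*_{\{i_1,i_2\},\mbP}$ (which sets $x_j = 0$) the matrix is lower triangular with diagonal entries $\prt g_1/\prt x_j|_{x_j = 0} = x_{i_1}^l$ and $\prt g_2/\prt x_{i_2} = k x_{i_2}^{k-1}$, so the determinant is $\pm k\, x_{i_1}^l x_{i_2}^{k-1}$, non-zero since $p \nmid k$. In case (c), take $g_1 = x_{i_1}^{l_1} x_{j_1}$, $g_2 = x_{i_2}^{l_2} x_{j_2}$ with $j_1 \ne j_2$ both outside $\{i_1, i_2\}$, and $J = \{j_1, j_2\}$: again the restriction sets $x_{j_1} = x_{j_2} = 0$, and the matrix is diagonal with entries $x_{i_1}^{l_1}$ and $x_{i_2}^{l_2}$, whose product is a non-zero monomial with no constraint on the exponents. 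In each subcase the key bookkeeping is just to check which off-diagonal entries vanish upon the restriction $x_j \mapsto 0$ for $j \notin I$, which makes the matrix triangular (or diagonal).

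I do not expect any serious obstacle here: the statement is essentially a direct translation of the definitions, and the only subtlety is the characteristic-$p$ phenomenon that $\prt(x_i^k)/\prt x_i = k x_i^{k-1}$ vanishes when $p \mid k$, which is precisely why the coprimality hypotheses $p \nmid k$ (resp.\ $p \nmid k_1 k_2$) appear in the pure-power cases and are absent when the relevant derivative is taken with respect to a linear factor. The mildest care is needed in the converse direction of part (1): one should note that when $g = x_i^l x_j$ the index $l$ is genuinely unconstrained because differentiating the factor $x_j$ never produces a coefficient divisible by $p$, whereas a hypothetical $g = x_i^k$ with $p \mid k$ contributes nothing to $M_{\Xi,J}$ after restriction. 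The remaining arguments are routine $2 \times 2$ determinant computations as sketched above.
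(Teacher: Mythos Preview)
Your proposal is correct and follows essentially the same approach as the paper: the paper leaves part (1) to the reader and, for part (2), chooses exactly the same pairs $\Xi$ and index sets $J$ you do, computing the identical determinants $k_1 k_2 x_{i_1}^{k_1-1} x_{i_2}^{k_2-1}$, $k\, x_{i_1}^l x_{i_2}^{k-1}$, and $x_{i_1}^{l_1} x_{i_2}^{l_2}$ in cases (a), (b), (c). Your treatment is in fact slightly more detailed, since you also sketch the converse direction of (1), which the paper omits; the only imprecision is the phrase ``$g$ restricted to $\Pi^*_{\{i\},\mbP}$ must be divisible by $x_l$'' (you mean $g$ itself, before restriction, must contain $x_l$ to first power with the remaining factor a pure power of $x_i$), but the conclusion you draw is correct.
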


\begin{proof}
It is easy to prove (1) and we leave it to readers.
We prove (2).
Let $\Xi$ be the set of the $2$ monomials given in (a), (b), (c), (d) or (e) and we set $\Pi^* = \Pi^*_{\{i_1,i_2\},\mbP}$.
Then we have
\[
\begin{split}
\det \left( M_{\Xi, \{x_{i_1},x_{i_2}\}} \right) |_{\Pi^*} &= k_1 k_2 x_{i_1}^{k_1-1} x_{i_2}^{k_2-1} \quad \text{(in case (a))}, \\
\det \left( M_{\Xi, \{x_{i_1},x_{i_2}\}} \right) |_{\Pi^*} &= k_1 k_2 x_{i_1}^{k_1-1} x_{i_2}^{l+k_2-1} \quad \text{(in case (b))}, \\
\det \left( M_{\Xi, \{x_{i_1},x_{i_2}\}} \right) |_{\Pi^*} &= k_1 k_2 x_{i_1}^{k_1+l_1-1} x_{i_2}^{k_2 + p l_2 -1}  \quad \text{(in case (c))}, \\
\det \left( M_{\Xi, \{x_j,x_{i_2}\}} \right) |_{\Pi^*} &= k x_{i_1}^{l_1+m} x_{i_2}^{l_2+k-1} \quad \text{(in case (d))}, \\
\det \left( M_{\Xi, \{x_{j_1},x_{j_2}\}} \right) |_{\Pi^*} &= x_{i_1}^{l_1+m_1} x_{i_2}^{l_2+m_2} \quad \text{(in case (e))}.
\end{split}
\]
Thus, any of the conditions (a), (b), (c) and (d) implies $(*)_{\{i_1,i_2\},\mbP}$.
\end{proof}

\subsection{Quasi-smoothness of special weighted hypersurfaces II}

In this subsection, let $\mbP = \mbP (a_0,\dots,a_n,c)$ be a weighted projective space with homogeneous coordinates $x_0,\dots,x_n$ and $v$ of weight $a_0,\dots,a_n$ and $c$.
Let $d$ be a positive integer such that $d = c m + a_k$ for some $k \in \{0,\dots,n\}$ and $m \ge 1$.
We fix such $k$.
The aim of this subsection is to make the quasi-smoothness criterion Lemma \ref{lem:qsmcri2} simpler for a general weighted hypersurface in $\mbP$ defined by an equation of the form $v^m x_k + f (x_0,\dots,x_n) = 0$.
Let $\Lambda$ be a set of monomials of weighted degree $d$ in variables $x_0,\dots,x_n$.

\begin{Def}
Let $I \subset \{0,\dots,n\}$ be a non-empty subset.
We say that $\Lambda$ satisfies $(\star)^k_{I,\mbP}$ if either there are  subsets $\Xi \subset \Lambda$ and $J \subset \{0,\dots,n\} \setminus \{k\}$ with $|\Xi| = |I|$ and $|J| = |I|$ such that 
\[
\det (M_{\Xi,J})|_{\Pi^*_{I,\mbP}}
\]
is a non-zero monomial or there are subsets $\Xi' \subset \Lambda$ and $J \subset \{0,\dots,n\} \setminus \{k\}$ with $|\Xi'| = |I|-1$ and $|J| = |I|$ such that 
\[
\det (M'_{\Xi',J})|_{\Pi^*_{I,\mbP}}
\] 
is a non-zero monomial.
\end{Def}

We drop the superscript $k$ from $(\star)^k_{I,\mbP}$ and denote it by $(\star)_{I,\mbP}$.
It is clear that if $\Lambda$ satisfies $(\star)_{I,\mbP}$, then it satisfies $(\dagger)_{I,\mbP}$.

\begin{Lem} \label{lem:qsmstar}
If $\Lambda$ satisfies $(\star)_{I,\mbP}$ for a non-empty subset $I \subset \{0,\dots,n\}$, then $\{v^m x_k\} \cup \Lambda$ satisfies both $(\dagger)_{I,\mbP}$ and $(\dagger)_{I \cup \{n+1\}, \mbP}$.
\end{Lem}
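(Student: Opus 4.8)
The plan is to mimic the proof of Lemma \ref{lem:qsmstdag}, treating the new coordinate $v$ as $x_{n+1}$ and the new monomial $v^m x_k$ the way $w^m$ was treated there. First I would observe that $\{v^m x_k\} \cup \Lambda$ trivially contains $\Lambda$, so since $\Lambda$ satisfies $(\star)_{I,\mbP}$ it in particular satisfies $(\dagger)_{I,\mbP}$ (as noted right after the definition of $(\star)$), and hence $\{v^m x_k\} \cup \Lambda$ satisfies $(\dagger)_{I,\mbP}$ as well by the monotonicity remark following the definition of $(\ast)_{I,\mbP}$, $(\ast)'_{I,\mbP}$, $(\dagger)_{I,\mbP}$. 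So the content is the claim for $I \cup \{n+1\}$.

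For that, unwind the hypothesis $(\star)^k_{I,\mbP}$ into its two cases. In the first case there are $\Xi \subset \Lambda$ and $J \subset \{0,\dots,n\}\setminus\{k\}$ with $|\Xi|=|J|=|I|$ and $\det(M_{\Xi,J})|_{\Pi^*_{I,\mbP}}$ a non-zero monomial. I would then enlarge $\Xi$ to $\Xi \cup \{n+1\} := \Xi \cup \{v^m x_k\}$ and form the $(|I|+1)\times(|I|+1)$ matrix $M'_{\Xi \cup \{v^m x_k\},\, J \cup \{k\}}$. Restricting to $\Pi^*_{I \cup \{n+1\},\mbP}$ and expanding the determinant along the column coming from $v^m x_k$: the entries $\prt(v^m x_k)/\prt x_j$ for $j \in J$ vanish (since $k \notin J$ and the monomials in $\Xi$ do not involve $v$, so the $v$-derivatives don't enter this block), while $\prt(v^m x_k)/\prt x_k = v^m$; using $p \mid m$ as in Lemma \ref{lem:qsmstdag} to kill the unwanted $v$-derivative terms, one gets
\[
\det\bigl(M'_{\Xi \cup \{n+1\},\, J \cup \{k\}}\bigr)\big|_{\Pi^*_{I \cup \{n+1\},\mbP}} = \pm\, v^m \,\det\bigl(M_{\Xi,J}\bigr)\big|_{\Pi^*_{I,\mbP}},
\]
a non-zero monomial, which exhibits $(\ast)'_{I \cup \{n+1\},\mbP}$ and hence $(\dagger)_{I \cup \{n+1\},\mbP}$. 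In the second case there are $\Xi' \subset \Lambda$ and $J \subset \{0,\dots,n\}\setminus\{k\}$ with $|\Xi'|=|I|-1$, $|J|=|I|$ and $\det(M'_{\Xi',J})|_{\Pi^*_{I,\mbP}}$ a non-zero monomial; the same augmentation, adjoining $v^m x_k$ to $\Xi'$ and $k$ to $J$, produces $\det(M'_{\Xi' \cup \{v^m x_k\},\, J \cup \{k\}})|_{\Pi^*_{I \cup \{n+1\},\mbP}} = \pm\, v^m \det(M'_{\Xi',J})|_{\Pi^*_{I,\mbP}}$, again a non-zero monomial, giving $(\ast)'_{I \cup \{n+1\},\mbP}$ and thus $(\dagger)_{I \cup \{n+1\},\mbP}$.

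I expect the only delicate point to be bookkeeping in the determinant expansion: one must check that after restricting to $\Pi^*_{I \cup \{n+1\},\mbP}$ the column indexed by $v^m x_k$ has the single entry $v^m$ (in the row indexed by the variable $x_k$, which is present because $k \notin J$ so $k$ is genuinely new in $J \cup \{k\}$) and zeros elsewhere in that column — this is where $p \mid m$ is used, exactly as in Lemma \ref{lem:qsmstdag}, to discard any spurious contributions, and where the condition $J \subset \{0,\dots,n\}\setminus\{k\}$ built into the definition of $(\star)$ is exactly what makes the augmentation $J \mapsto J \cup \{k\}$ legitimate and size-increasing. Everything else is a routine repetition of the argument already carried out for $\{w^m\}\cup\Lambda$.
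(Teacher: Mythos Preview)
Your approach is exactly the paper's: dispose of $(\dagger)_{I,\mbP}$ by monotonicity, then in each case of $(\star)$ adjoin $v^m x_k$ to the monomial set and $k$ to the index set and expand along the new column. One notational slip: in the first case you should use $M$, not $M'$. With $\Xi\cup\{v^m x_k\}$ and $J\cup\{k\}$ both of size $|I|+1$, the matrix $M'_{\Xi\cup\{v^m x_k\},\,J\cup\{k\}}$ has $|I|+2$ rows and is not square; the expansion you actually describe (only derivative rows, with the single nonzero entry $\partial(v^m x_k)/\partial x_k=v^m$ in row $k$) is the computation of $\det M_{\Xi\cup\{v^m x_k\},\,J\cup\{k\}}$, and it verifies $(\ast)_{I\cup\{n+1\},\mbP}$ rather than $(\ast)'_{I\cup\{n+1\},\mbP}$. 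This is what the paper does. Also, $p\mid m$ is not used here: since $J\cup\{k\}\subset\{0,\dots,n\}$, no row is $\partial/\partial v$, and the zeros in the $v^m x_k$ column come simply from $j\ne k$.
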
 

\begin{proof}
It is clear that $\{v^m x_k\} \cup \Lambda$ satisfies $(\dagger)_{I,\mbP}$.
Suppose that there are subsets $\Xi \subset \Lambda$ and $J \subset \{0,\dots,n\} \setminus \{k\}$ such that $|\Xi| = |I|$, $|J| = |I|$ and $\det (M_{\Xi,J})|_{\Pi^*_{I,\mbP}}$ is a non-zero monomial.
Then we have 
\[
\det \left( M_{\{v x_k\} \cup \Xi,J \cup \{k\}} \right)|_{\tilde{\Pi}^*_{I \cup \{n+1\},\mbP}} = \pm v^m \det \left( M_{\Xi,J} \right)|_{\Pi^*_{I,\mbP}},
\] 
which shows that $\{v^m x_k\} \cup \Lambda$ satisfies $(\dagger)_{I \cup \{n+1\},\mbP}$.
Suppose that there are subsets $\Xi' \subset \Lambda$ and $J \subset \{0,\dots,n\} \setminus \{k\}$ such that $|\Xi'| = |I| - 1$, $|J| = |I|$ and $\det (M'_{\Xi',J})|_{\Pi^*_{I,\mbP}}$ is a non-zero monomial.
Then we have 
\[
\det \left( M'_{\{v x_k\} \cup \Xi',J \cup \{k\}} \right)|_{\Pi_{I \cup \{n+1\},\mbP}} = \pm v^m \det \left( M'_{\Xi,I} \right)|_{\Pi^*_{I,\mbP}},
\] 
which shows that $\{v^m x_k\} \cup \Lambda$ satisfies $(\dagger)_{I \cup \{n+1\},\mbP}$.
This completes the proof.
\end{proof}

The following gives a criterion for quasi-smoothness along $\mbP \setminus \mbP^{\circ}_{\wt > 1}$.

\begin{Lem} \label{lem:qsmtypeIIZinsep}
Suppose that $\Lambda$ satisfies $(\star)_{I,\mbP}$ for any non-empty subset $I \subset \{0,\dots,n\}_{\wt > 1}$.
Then the weighted hypersurface in $\mbP$ defined by $v^m x_k + f = 0$ is quasi-smooth along $\mbP \setminus \mbP^{\circ}_{\wt = 1}$ for a general $f \in \langle \Lambda \rangle_{\K}$.
\end{Lem}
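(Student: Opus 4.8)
The plan is to reduce quasi-smoothness along $\mbP \setminus \mbP^{\circ}_{\wt = 1}$ to a finite stratification of that locus by the coordinate strata $\Pi^*_{J,\mbP}$, and then apply Lemma \ref{lem:qsmcri2} on each stratum after verifying condition $(\dagger)$ via Lemma \ref{lem:qsmstar}. First I would write the open set $\mbP^{\circ}_{\wt = 1} = \bigcup_{i \in \{0,\dots,n\}_{\wt=1}} (x_i \ne 0)$, so that its complement $\mbP \setminus \mbP^{\circ}_{\wt=1}$ is the locus where all weight-one coordinates vanish. Decomposing this complement into coordinate strata, we get
\[
\mbP \setminus \mbP^{\circ}_{\wt = 1} = \bigcup_{S} \Pi^*_{S,\mbP},
\]
where $S$ ranges over the non-empty subsets of $\{0,\dots,n+1\}$ (recall $v$ is indexed as the $(n+1)$st coordinate) with $S \cap \{0,\dots,n\}_{\wt = 1} = \emptyset$. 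Each such $S$ has the form $S = I$ or $S = I \cup \{n+1\}$ for a (possibly empty) subset $I \subset \{0,\dots,n\}_{\wt > 1}$; note $S$ cannot be empty, so when $I = \emptyset$ we must have $S = \{n+1\}$, the single point $\Pi^*_{\{n+1\},\mbP} = (v \ne 0, x_0 = \cdots = x_n = 0)$, which lies on the hypersurface $v^m x_k + f = 0$ only if it is not a point of it — in fact this point is not on the hypersurface since $x_k$ vanishes there and $f|_{\Pi^*_{\{n+1\}}} = 0$ would require the monomial... so that stratum contributes nothing and may be discarded. For $I \ne \emptyset$ I would use the hypothesis that $\Lambda$ satisfies $(\star)_{I,\mbP}$, apply Lemma \ref{lem:qsmstar} to conclude that $\{v^m x_k\} \cup \Lambda$ satisfies both $(\dagger)_{I,\mbP}$ and $(\dagger)_{I \cup \{n+1\},\mbP}$, and then invoke Lemma \ref{lem:qsmcri2} to get that a general member of $\mcL(\{v^m x_k\} \cup \Lambda)$ is quasi-smooth along both $\Pi^*_{I,\mbP}$ and $\Pi^*_{I \cup \{n+1\},\mbP}$.

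The remaining point is that "quasi-smooth along $\Pi^*_{S,\mbP}$ for each $S$ in a finite list, for a general member" upgrades to "quasi-smooth along the union, for a general member." Since the list of relevant $S$ is finite and the condition of being quasi-smooth along a fixed locally closed set is an open condition on $\mcL(\{v^m x_k\} \cup \Lambda)$ (the bad locus is the image of a projective-over-affine incidence variety), the intersection of finitely many non-empty opens is non-empty; hence a general $f \in \langle \Lambda \rangle_{\K}$ gives a hypersurface $(v^m x_k + f = 0)$ that is quasi-smooth along $\bigcup_S \Pi^*_{S,\mbP} = \mbP \setminus \mbP^{\circ}_{\wt = 1}$. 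One caveat to handle carefully: Lemma \ref{lem:qsmcri2} is stated for a general member of $\mcL(\Lambda')$ where $\Lambda'$ is the full monomial set, whereas here we want a general $f$ in the span of $\Lambda$ only, with the $v^m x_k$ term fixed. This is fine because $v^m x_k$ is a single fixed monomial: the linear system $\mcL(\{v^m x_k\} \cup \Lambda)$ contains the affine subfamily $\{v^m x_k + f : f \in \langle \Lambda \rangle_{\K}\}$, and the proof of Lemma \ref{lem:qsmcri1} (the dimension count on $W_{\msp}$) goes through verbatim when $W$ is replaced by this affine translate, since the kernel computation is unaffected by a fixed shift; alternatively one observes directly that $\rank M'_{\{v^m x_k\} \cup \Lambda}(\msp) \ge |S|$ at every $\msp \in \Pi^*_{S,\mbP}$ from the $(\dagger)$ conditions and reruns the argument.

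The main obstacle I anticipate is bookkeeping rather than anything deep: correctly enumerating which strata $\Pi^*_{S,\mbP}$ make up $\mbP \setminus \mbP^{\circ}_{\wt=1}$, checking that the degenerate strata (those contained in $\{x_k = 0\}$ together with $v \ne 0$, on which the hypersurface equation might behave badly or might simply miss the stratum) are handled — either they don't meet $X$, or the $(\star)$ hypothesis still delivers enough rank — and making sure the passage from "general $f$" to "the finite intersection of generality conditions" is justified by the finiteness of the index set. No single step should be technically hard once Lemmas \ref{lem:qsmcri1}, \ref{lem:qsmcri2}, and \ref{lem:qsmstar} are in hand; the work is in assembling them over the stratification cleanly.
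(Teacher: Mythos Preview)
Your approach matches the paper's exactly: stratify $\mbP \setminus \mbP^{\circ}_{\wt=1}$ into the pieces $\Pi^*_{I,\mbP}$ and $\Pi^*_{I\cup\{n+1\},\mbP}$ with $I \subset \{0,\dots,n\}_{\wt>1}$, then combine Lemmas \ref{lem:qsmstar} and \ref{lem:qsmcri2}. One small correction: the vertex $\Pi^*_{\{n+1\},\mbP}$ \emph{does} lie on the hypersurface (all $x_i$ vanish there, so $v^m x_k + f = 0$), but quasi-smoothness there is immediate since $\partial(v^m x_k + f)/\partial x_k = v^m \ne 0$ at that point --- compare the explicit remark in the proof of Lemma \ref{lem:qsmtypeIIZ}.
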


\begin{proof}
We have
\[
\mbP \setminus \mbP^{\circ}_{\wt > 1} 
= \left( \bigcup_{I \subset \{0,\dots,n\}_{\wt > 1}} \Pi^*_{I,\mbP} \right)
\cup  \left( \bigcup_{I \subset \{0,\dots,n\}_{\wt > 1}} \Pi^*_{I \cup \{n+1\}.\mbP} \right)
\]
Thus the assertion follows from Lemmas \ref{lem:qsmstar} and \ref{lem:qsmcri2}.
\end{proof}

The following gives a criterion for quasi-smoothness when $m = 1$.

\begin{Lem} \label{lem:qsmtypeIIZ}
Suppose that $m = 1$ and $\Lambda$ satisfies $(\star)_{I,\mbP}$ for any non-empty subset $I \subset \{0,\dots,n\} \setminus \{k\}$.
Then the weighted hypersurface in $\mbP$ defined by $v x_k + f = 0$ is quasi-smooth for a general $f \in \langle \Lambda \rangle_{\K}$.
\end{Lem}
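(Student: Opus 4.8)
The plan is to verify quasi-smoothness of a general member of the linear system $\mcL(\{v x_k\} \cup \Lambda)$ stratum by stratum with respect to the decomposition of $\mbP$ into the coordinate strata $\Pi^*_{I',\mbP}$ over the non-empty subsets $I' \subset \{0,\dots,n+1\}$, where $v$ is regarded as the $(n+1)$st coordinate. Since these strata cover $\mbP$ and there are only finitely many of them, it suffices to exhibit, for each $I'$, a dense open subset of the system whose members are quasi-smooth along $\Pi^*_{I',\mbP}$ (equivalently, whose affine cone is smooth along $\tau^{-1}(\Pi^*_{I',\mbP})$); the common intersection of these finitely many dense opens then consists of quasi-smooth hypersurfaces, and, after normalising the coefficient of $v x_k$ to $1$, its general member has the required form $v x_k + f$ with $f$ general in $\langle \Lambda \rangle_{\K}$.

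The strata with $k \in I'$ are handled directly: on $\tau^{-1}(\Pi^*_{I',\mbP})$ the partial derivative $\prt(v x_k + f)/\prt v = x_k$ is nowhere vanishing, so the affine cone is smooth there for \emph{every} member of the system. The only remaining low-dimensional stratum is then $\Pi^*_{\{n+1\},\mbP}$, i.e.\ the point with $x_0 = \cdots = x_n = 0$: here $\prt(v x_k + f)/\prt x_k = v + \prt f/\prt x_k$, and $\prt f/\prt x_k$ vanishes at this point because $d = c + a_k > a_k$ forces the monomial $x_k$ to be absent from $\Lambda$; thus the Jacobian equals $v \ne 0$ there and the cone is again smooth.

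For the other strata, write $I = I' \setminus \{n+1\}$; then $I$ is a non-empty subset of $\{0,\dots,n\} \setminus \{k\}$ and $I'$ is either $I$ or $I \cup \{n+1\}$. By hypothesis $\Lambda$ satisfies $(\star)_{I,\mbP}$, so Lemma \ref{lem:qsmstar} (applied with $m = 1$) shows that $\{v x_k\} \cup \Lambda$ satisfies both $(\dagger)_{I,\mbP}$ and $(\dagger)_{I \cup \{n+1\},\mbP}$, whence Lemma \ref{lem:qsmcri2} gives a dense open locus of members quasi-smooth along $\Pi^*_{I,\mbP}$ and along $\Pi^*_{I \cup \{n+1\},\mbP}$. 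Intersecting this over all finitely many admissible $I$ together with the two directly treated families above proves that a general member of $\mcL(\{v x_k\} \cup \Lambda)$ — equivalently $v x_k + f$ for general $f$ — is quasi-smooth.

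The argument is essentially bookkeeping: the points needing attention are making the case split genuinely exhaustive and disposing of the two strata $\Pi^*_{\{k\},\mbP}$ and $\Pi^*_{\{n+1\},\mbP}$, which lie outside the range of the genericity lemmas and are treated by inspecting the Jacobian directly — the second of these relying on the elementary degree observation that $x_k \notin \Lambda$. I do not anticipate any deeper obstacle.
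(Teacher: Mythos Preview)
Your proof is correct and follows essentially the same approach as the paper's: both first dispose of the open set $(x_k \ne 0)$ via the partial derivative $\partial(v x_k + f)/\partial v = x_k$, then cover $(x_k = 0)$ by the strata $\Pi^*_{I',\mbP}$ with $I' \subset \{0,\dots,n+1\} \setminus \{k\}$, invoke Lemma~\ref{lem:qsmstar} together with Lemma~\ref{lem:qsmcri2} for the strata with $I' \cap \{0,\dots,n\} \ne \emptyset$, and handle the single remaining stratum $\Pi^*_{\{n+1\},\mbP}$ separately. The only cosmetic difference is that the paper records this last case by noting that $\{v x_k\} \cup \Lambda$ satisfies $(\star)_{\{n+1\},\mbP}$, whereas you compute the Jacobian entry $\partial(v x_k + f)/\partial x_k = v$ directly; these are the same observation.
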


\begin{proof}
Let $f \in \langle \Lambda \rangle_{\K}$ be a general element and $X$ the hypersurface in $\mbP$ defined by $v x_k + f = 0$.
Since
\[
\frac{\prt (v x_k + f)}{\prt v} = x_k,
\]
we see that $X$ is quasi-smooth along the open set $(x_k \ne 0) \subset \mbP$.
We set
\[
\begin{split}
\mcI &= \{\, I \subset \{0,\dots,n\} \setminus \{k\} \mid I \ne \emptyset \,\}, \\ \mcI_{n+1} &= \{\, I' \subset \{0,\dots,n+1\} \setminus \{k\} \mid I' \ne \emptyset \,\}.
\end{split}
\]
By Lemma \ref{lem:qsmcri2}, it is enough to show that $\{v x_k\} \cup \Lambda$ satisfies $(\dagger)_{I,\mbP}$ for any $I \in \mcI_{n+1}$ since
\[
\mbP \supset (x_k = 0) = \bigcup_{I' \in \mcI_{n+1}} \Pi^*_{I',\mbP}.
\] 
This follows from the assumption and Lemma \ref{lem:qsmstar} since
\[
\mcI_{n+1} = \mcI \cup \{\, I \cup \{n+1\} \mid I \in \mcI \,\} \cup \{ \{n+1\}\}
\]
and $\{v x_k\} \cup \Lambda$ clearly satisfies $(\star)_{\{n+1\},\mbP}$.
\end{proof}

The following gives an easy criterion for the condition $(\star)_{I,\mbP}$ for $I \subset \{0,\dots,n\}$ with $|I| \le 2$.

\begin{Lem} \label{lem:checkstar}
Let $\Lambda$ be a set of monomials in variables $x_0,\dots,x_n$.
\begin{enumerate}
\item For $i \in \{0,\dots,n\}$, $\Lambda$ satisfies $(\star)^k_{\{i\},\mbP}$ if either $x_i^l \in \Lambda$ for some $l > 0$ or $x_i^l x_j$ for some $l > 0$ and $j \in \{0,\dots,n\} \setminus \{i,k\}$.
\item For distinct $i_1, i_2 \in \{0,\dots,n\} \setminus \{k\}$, $\Lambda$ satisfies $(\star)_{\{i_1,i_2\},\mbP}$ if one of the following holds.
\begin{enumerate}
\item $x_{i_1}^{l_1} x_{i_2}^{l_2}, x_{i_1}^{m_1} x_{i_2}^{m_2} \in \Lambda$ for some $l_1, l_2, m_1, m_2 \ge 0$ such that at least one of $l_1 - m_1$ and $l_2 - m_2$ is not divisible by $p$.
\item $x_{i_1}^{l_1}, x_{i_2}^{l_2} x_j \in \Lambda$ for some $l_1,l_2 > 0$ and $j \in \{0,\dots,n\} \setminus \{i_1,i_2,k\}$. 
\item $x_{i_1}^{l_1} x_{j_1}, x_{i_2}^{l_2} x_{j_2} \in \Lambda$ for some $l_1, l_2 > 0$ and distinct $j_1, j_2 \in \{0,\dots,n\} \setminus \{i_1,i_2,k\}$.
\end{enumerate}
\item For $i \in \{0,\dots,n\} \setminus \{k\}$, $\Lambda$ satisfies $(\star)_{\{i,k\},\mbP}$ if one of the following holds.
\begin{enumerate}
\item $x_k^{\alpha_1} x_i^{\beta_1} x_{j_1}, x_k^{\alpha_2} x_i^{\beta_2} x_{j_2} \in \Lambda$ for some $\alpha_1,\alpha_2,\beta_1,\beta_2 \ge 0$ and distinct $j_1,j_2 \in \{0,\dots,n\} \setminus \{k,i\}$.
\item $x_k^{\alpha} x_i^{\beta} x_j, x_k^{\delta} x_i^{\gamma} \in \Lambda$ for some $\alpha,\beta,\gamma,\delta \ge 0$ and $j \in \{0,\dots,n\} \setminus \{k,i\}$.
\item $x_k^{\alpha}, x_i^{\beta} x_k^{\gamma} \in \Lambda$ for some $\alpha,\beta > 0$ and $\gamma \ge 0$ such that $p \nmid \beta$.
\item $x_k^{\alpha} x_i, x_i^{\beta} \in \Lambda$ for some $\alpha,\beta > 0$ such that $p \nmid \beta -1$.
\end{enumerate}
\end{enumerate}
\end{Lem}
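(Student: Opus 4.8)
The plan is to prove Lemma~\ref{lem:checkstar} by the explicit linear-algebra method already used in the proof of Lemma~\ref{lem:qsmast}: for each listed sufficient condition I would exhibit a concrete pair $(\Xi,J)$, or $(\Xi',J)$ when the extra ``function row'' of $M'$ is needed, subject to the constraint $k\notin J$ built into the definition of $(\star)^k_{I,\mbP}$; then write down $M_{\Xi,J}$ (resp.\ $M'_{\Xi',J}$), restrict it to the stratum $\Pi^*_{I,\mbP}$, and verify that the determinant is a non-zero monomial. The essential simplification is that on $\Pi^*_{I,\mbP}$ every homogeneous coordinate of $\mbP$ outside $\{\,x_i\mid i\in I\,\}$ is set to zero --- in particular $v$ vanishes, and, when $k\notin I$, so does $x_k$ --- so in almost all cases the restricted matrix is triangular after restriction and its determinant is a product of the surviving entries.

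For part~(1): if $x_i^l\in\Lambda$ take $\Xi'=\{x_i^l\}$ and $J=\emptyset$, so that $M'_{\Xi',\emptyset}=(x_i^l)$ has determinant $x_i^l\neq 0$ on $\Pi^*_{\{i\},\mbP}$; if instead $x_i^lx_j\in\Lambda$ with $j\notin\{i,k\}$ take $\Xi=\{x_i^lx_j\}$ and $J=\{j\}$, so that $\det M_{\Xi,J}|_{\Pi^*_{\{i\},\mbP}}=x_i^l\neq 0$. For part~(2), write $\Pi^*=\Pi^*_{\{i_1,i_2\},\mbP}$, on which $x_k$ and $v$ both vanish. In case~(a) take $\Xi'=\{x_{i_1}^{l_1}x_{i_2}^{l_2},\,x_{i_1}^{m_1}x_{i_2}^{m_2}\}$ and $J=\{i_1\}$ or $\{i_2\}$: the determinant of $M'_{\Xi',J}|_{\Pi^*}$ comes out as $\pm(l_1-m_1)x_{i_1}^{l_1+m_1-1}x_{i_2}^{l_2+m_2}$ or $\pm(l_2-m_2)x_{i_1}^{l_1+m_1}x_{i_2}^{l_2+m_2-1}$, one of which is a non-zero monomial by hypothesis (the hypothesis forces the two monomials to be distinct, which in turn makes the relevant exponent non-negative). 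In cases~(b) and (c), analogous choices of the two given monomials together with a set $J$ of one or two indices distinct from $k$ (using the function row in~(b)) make the restricted matrix $\operatorname{diag}(x_{i_1}^{l_1},\,x_{i_2}^{l_2})$, with determinant $\pm x_{i_1}^{l_1}x_{i_2}^{l_2}$, a non-zero monomial.

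The substantive case, and the one I expect to be the real obstacle, is part~(3): $(\star)_{\{i,k\},\mbP}$, where $k\in I$ but $k$ is still forbidden in $J$, so one cannot differentiate with respect to $x_k$ and must instead exploit the monomials' dependence on a third variable $x_j$ or bring in the function row. Concretely: in~(a) take $\Xi=\{x_k^{\alpha_1}x_i^{\beta_1}x_{j_1},\,x_k^{\alpha_2}x_i^{\beta_2}x_{j_2}\}$, $J=\{j_1,j_2\}$, giving after restriction $\operatorname{diag}(x_k^{\alpha_1}x_i^{\beta_1},\,x_k^{\alpha_2}x_i^{\beta_2})$; in~(b) take $\Xi'=\{x_k^{\alpha}x_i^{\beta}x_j,\,x_k^{\delta}x_i^{\gamma}\}$, $J=\{j\}$, giving determinant $\pm x_k^{\alpha+\delta}x_i^{\beta+\gamma}$; in~(c) take $\Xi'=\{x_k^{\alpha},\,x_i^{\beta}x_k^{\gamma}\}$, $J=\{i\}$, giving $\beta\,x_i^{\beta-1}x_k^{\alpha+\gamma}$, non-zero exactly because $p\nmid\beta$; in~(d) take $\Xi'=\{x_k^{\alpha}x_i,\,x_i^{\beta}\}$, $J=\{i\}$, giving $(\beta-1)x_i^{\beta}x_k^{\alpha}$, non-zero exactly because $p\nmid\beta-1$. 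Apart from producing these matrices, the remaining work is bookkeeping: checking $k\notin J$ in every instance, checking that all exponents that occur are non-negative (using $\alpha,\beta,\dots\ge 0$ and, where needed, distinctness of the chosen monomials), and checking that each scalar coefficient is a unit in $\K$ --- which is precisely where the divisibility hypotheses in~(2)(a), (3)(c) and (3)(d) enter.
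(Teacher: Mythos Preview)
Your proposal is correct and matches the paper's proof essentially line for line: the paper makes exactly the same choices of $(\Xi,J)$ or $(\Xi',J)$ in every sub-case and computes the same restricted determinants, with the same divisibility hypotheses entering at the same points. Your additional remarks on verifying $k\notin J$ and the non-negativity of exponents are sound bookkeeping that the paper leaves implicit.
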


\begin{proof}
(1) follows easily since
\[
\det \left( M'_{\{x_i^l\},\emptyset} \right)|_{\Pi^*_{\{i\},\mbP}} = \det \left( M_{\{x_i^l x_j\}, \{x_j\}} \right) |_{\Pi^*_{\{i\},\mbP}} = x_i^l.
\]

We prove (2).
Set $\Pi^* = \Pi^*_{\{i_1,i_2\},\mbP}$.
Let $\Xi$ be the set of $2$ monomials indicated in (a), (b), (c) or (d).
Suppose that we are in case (a).
Then
\[
\begin{split}
\det \left(M'_{\Xi, \{x_{i_1}\}} \right)|_{\Pi^*} &= (m_1-l_1) x_{i_1}^{m_1+l_1-1} x_{i_2}^{m_2+l_2}, \\
\det \left(M'_{\Xi, \{x_{i_2}\}} \right)|_{\Pi^*} &= (m_2-l_2) x_{i_1}^{m_1+l_1} x_{i_2}^{m_2+l_2-1}.
\end{split}
\]
By the assumption, at least one of the above monomials is non-zero, and hence $\Lambda$ satisfies $(\star)_{\{i_1,i_2\},\mbP}$.
In the other cases, we have
\[
\begin{split}
\det \left(M'_{\Xi,\{x_j\}} \right) |_{\Pi^*} &= x_{i_1}^{l_1} x_{i_2}^{l_2} \quad \text{(in case (b))}, \\
\det \left(M_{\Xi, \{x_{j_1},x_{j_2}\}} \right) |_{\Pi^*} &= x_{i_1}^{l_1} x_{i_2}^{l_2} \quad \text{(in case (c))}.
\end{split}
\]
This shows that $\Lambda$ satisfies $(\star)_{\{i_1,i_2\},\mbP}$.

Finally we prove (3).
Set $\Pi^* = \Pi^*_{\{i,k\},\mbP}$ and let $\Xi$ be the set of $2$ monomials indicated in (a), (b), (c) or (d).
We have
\[
\begin{split}
\det \left(M_{\Xi,\{x_{j_1},x_{j_2}\}} \right) |_{\Pi^*} &= x_k^{\alpha_1+\alpha_2} x_i^{\alpha_2+\beta_2} \quad \text{(in case (a))}, \\
\det \left(M'_{\Xi, \{x_j\}} \right) |_{\Pi^*} &= x_k^{\alpha+\delta} x_i^{\beta+\gamma} \quad \text{(in case (b))}, \\
\det \left(M'_{\Xi, \{x_i\}} \right) |_{\Pi^*} &= \beta x_k^{\alpha+\gamma} x_i^{\beta-1} \quad \text{(in case (c))}, \\
\det \left(M'_{\Xi, \{x_i\}} \right) |_{\Pi^*} &= (\beta - 1) x_k^{\alpha} x_i^{\beta} \quad \text{(in case (d))}.
\end{split}
\]
Therefore $\Lambda$ satisfies $(\star)_{\{i,k\},\mbP}$.
\end{proof}

We make simpler the quasi-smoothness criterion given in Lemma \ref{lem:qsmtypeIIZ} when $n = 3$.
In the following lemma, we assume $k = 1$ for simplicity of the description.

\begin{Lem} \label{lem:criqsmZ}
Let $\mbP = \mbP (a_0,\dots,a_3,c)$ be a weighted hypersurface with homogeneous coordinates $x_0,\dots,x_3,v$ and let $\Lambda$ be a set of monomials in variables $x_0,\dots,x_3$ of degree $d$.
Suppose that one of the following holds.
\begin{enumerate}
\item $x_1^{l_1}, x_2^{l_2},x_3^{l_3} \in \Lambda$ for some $l_1,l_2,l_3$ such that at least two of them are not divisible by $p$.
\item $x_1^{l_1}, x_2^{l_2},x_3^{l_3} x_1 \in \Lambda$ for some $l_1,l_2,l_3$ such that $p \nmid l_3$ and either $p \nmid l_1$ or $p \nmid l_2$.
\item $x_1^{p l_1}, x_2^{l_2} x_1, x_3^{l_3} x_2 \in \Lambda$ for some $l_1,l_2,l_3$.
\item $x_1^{l_1} x_2, x_2^{l_2} x_3, x_3^{l_3} x_1 \in \Lambda$ for some $l_1,l_2,l_3$ such that $p \nmid l_1 l_2 l_3 + 1$.
\item $x_1^{l_1}, x_2 x_1^{l_2}, x_3 x_1^{l_3} \in \Lambda$ for some $l_1,l_2,l_3$ and $\Lambda$ satisfies $(\star)_{I,\mbP}$ for any non-empty subset $I \subset \{2,3\}$.
\end{enumerate}
Then the weighted hypersurface in $\mbP$ defined by $v x_0 + f = 0$ is quasi-smooth for a general $\langle \Lambda \rangle_{\K}$.
\end{Lem}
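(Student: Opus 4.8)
The plan is to reduce the statement, via the quasi-smoothness criterion already proved, to a finite check on small Jacobian minors, and then to carry out that check case by case. Since the hypersurface in question has the shape $v x_0 + f = 0$ (so $m = 1$ and the distinguished coordinate is $x_0$), Lemma~\ref{lem:qsmtypeIIZ} tells me it suffices to prove that, under each of the five hypotheses (1)--(5), $\Lambda$ satisfies $(\star)_{I,\mbP}$ for every non-empty subset $I \subset \{1,2,3\}$. There are seven such $I$ --- the three singletons, the three pairs, and the triple $\{1,2,3\}$ --- and I would run through all seven for each hypothesis.

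For the singletons and for most of the pairs I expect the verification to be immediate from Lemma~\ref{lem:checkstar}: in every one of (1)--(5) the monomials listed are pure powers $x_i^{\ell}$ or mixed monomials $x_i^{\ell}x_j$ of exactly the forms occurring in parts (1) and (2) of that lemma, and the divisibility conditions stated in (1)--(4) are precisely the hypotheses those parts require; under (5) the singletons $\{2\},\{3\}$ and the pair $\{2,3\}$ are assumed outright. A few pairs fall outside the scope of Lemma~\ref{lem:checkstar} --- for instance $\{2,3\}$ under (3) or under (4), where the relevant monomials involve $x_1$ and hence restrict to $0$ on the corresponding coordinate plane. For those I would write down by hand a $2\times 2$ submatrix of $M_\Lambda$ (or of an $M'$-matrix) formed from two of the listed monomials and two of the derivations $\partial/\partial x_1,\partial/\partial x_2,\partial/\partial x_3$ --- note that $\partial/\partial x_j$ is permitted even when $j\notin I$, only $j=0$ being excluded --- and check that its restriction to $\Pi^*_{I,\mbP}$ is $\pm$ a monomial; e.g.\ in case (4), on $\Pi^*_{\{2,3\},\mbP}$ one differentiates $x_2^{l_2}x_3$ by $x_3$ and $x_3^{l_3}x_1$ by $x_1$, these two survive the restriction, and the determinant is $\mp x_2^{l_2}x_3^{l_3}$.

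The heart of the argument is the triple $I = \{1,2,3\}$. For each hypothesis I would select three of the listed monomials (or, when convenient, two of them plus the ``monomial row'' of an $M'$-matrix), pair them with $\partial/\partial x_1,\partial/\partial x_2,\partial/\partial x_3$, and compute the $3\times 3$ determinant after setting $x_0=0$. In case (1) one picks two exponents not divisible by $p$ and, using the $M'$-matrix, gets a determinant $\pm l_i l_j\, x_h^{l_h}x_i^{l_i-1}x_j^{l_j-1}$; case (2) proceeds the same way, using $p\nmid l_3$ together with whichever of $l_1,l_2$ is prime to $p$; in case (5) the determinant comes out simply as $x_1^{l_1+l_2+l_3}$. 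In case (3) the key point is that every partial derivative of $x_1^{p l_1}$ vanishes in characteristic $p$, so $x_1^{pl_1}$ must be placed in the monomial row of an $M'$-matrix, and expansion along the column it occupies produces $x_1^{pl_1}x_2^{l_2}x_3^{l_3}$. In case (4) the cyclic configuration $x_1^{l_1}x_2,\,x_2^{l_2}x_3,\,x_3^{l_3}x_1$ yields a $3\times 3$ determinant equal to $(l_1 l_2 l_3 + 1)\, x_1^{l_1}x_2^{l_2}x_3^{l_3}$, which is a non-zero monomial exactly because $p \nmid l_1 l_2 l_3 + 1$ --- this is the only place that hypothesis is used.

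I expect the main obstacle to be bookkeeping rather than any single hard idea: for each of the five hypotheses and each of the seven strata one must single out the right monomials and derivations so that the restricted minor does not accidentally collapse, keeping careful track of when a coefficient $l$ becomes $0$ modulo $p$. The two genuine subtleties are the vanishing of the derivatives of $p$-th powers in hypothesis (3), which forces a switch from an $M$-minor to an $M'$-minor, and the appearance of the arithmetic factor $l_1 l_2 l_3 + 1$ in the cyclic hypothesis (4), which is precisely what the stated divisibility condition is there to control.
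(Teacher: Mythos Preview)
Your proposal is correct and follows essentially the same route as the paper: reduce via Lemma~\ref{lem:qsmtypeIIZ} to checking $(\star)_{I,\mbP}$ for all non-empty $I\subset\{1,2,3\}$, dispatch the strata with $|I|\le 2$ via Lemma~\ref{lem:checkstar} (supplemented by a few direct $2\times 2$ minors where that lemma does not literally apply), and compute the $3\times 3$ minor on $\Pi^*_{\{1,2,3\}}$ case by case exactly as you describe. If anything you are slightly more careful than the paper, which simply asserts that the $|I|\le 2$ checks are ``easy to see'' without flagging the pairs in cases (3) and (4) that need a direct computation.
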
 

\begin{proof}
By Lemma \ref{lem:qsmtypeIIZ}, it is enough to show that $\Lambda$ satisfies $(\star)_{I,\mbP}$ for any non-empty subset $I \subset \{1,2,3\}$.
By Lemma \ref{lem:checkstar}, it is easy to see that $\Lambda$ satisfies $(\star)_{I,\mbP}$ for any $I \subset \{1,2,3\}$ with $|I| \le 2$.
Thus it remains to show that $\Lambda$ satisfies $(\star)_{I,\mbP}$ for $I = \{1,2,3\}$.
In the following, we denote by $\Xi$ the set of $3$ monomials indicated in (1), (2), (3) or (4), and we set $\Pi^* = \Pi^*_{x_1,x_2,x_3}$.

Suppose that we are in case (1).
We may assume that $p \nmid l_1$ and $p \nmid l_2$.
We have
\[
\det \left( M'_{\Xi,\{x_1,x_2\}} \right)|_{\Pi^*} = l_1 l_2 x_1^{l_1-1} x_2^{l_2-1} x_3^{l_3},
\]
which verifies $(\star)_{I,\mbP}$.
Suppose that we are in case (2).
We have
\[
\det \left( M'_{\Xi, \{x_1,x_3\}} \right)|_{\Pi^*} = l_1 l_3 x_1^{l_1} x_2^{l_2} x_3^{l_3-1}, \ 
\det \left( M'_{\Xi, \{x_2,x_3\}} \right)|_{\Pi^*} = l_2 l_3 x_1^{l_1+1} x_2^{l_2-1} x_3^{l_3-1}.
\]
By the assumption, at least one of the above monomials is non-zero and thus $(\star)_{I,\mbP}$ is verified.

Suppose that we are in case (3).
We have
\[
\det \left( M'_{\Xi, \{x_1,x_2\}} \right)|_{\Pi^*} = x_1^{p l_1} x_2^{l_2} x_3^{l_3},
\]
which verifies $(\star)_{I,\mbP}$.

Suppose that we in case (4).
We have
\[
\det \left( M_{\Xi, \{x_1,x_2,x_3\}} \right)|_{\Pi^*} = (l_1 l_2 l_3 + 1) x_1^{l_1} x_2^{l_2} x_3^{l_3},
\]
which verifies $(\star)_{I,\mbP}$.

Finally suppose that we are in case (5).
It is clear that $\Lambda$ satisfies $(\star)_{\{1\},\mbP}$ since $x_1^{l_1} \in \Lambda$.
We have
\[
\det \left( M'_{\{x_1^{l_1},x_i x_1^{l_i}\}, \{x_i\}} \right)|_{\Pi^*_{x_1,x_i}} = x_1^{l_1 + l_i}
\]
for $i = 2,3$ and thus $\Lambda$ satisfies $(\star)_{\{1,i\},\mbP}$ for $i = 1,2$.
Further, we have
\[
\det \left( M'_{\{x_1^{l_1}, x_2 x_1^{l_2}, x_3 x_1^{l_3}\}, \{x_2,x_3\}} \right)|_{\Pi^*_{x_1,x_2,x_3}} = x_1^{l_1+l_2+l_3},
\]
which verifies $(\star)_{\{1,2,3\},\mbP}$.
We have verified $(\star)_{I,\mbP}$ for any $I \subset \{1,2,3\}$ and thus the proof is completed.
\end{proof}

\section{Critical points} \label{sec:critical}

The aim of this section is to show that a suitable section on a weighted projective space or a weighted hypersurface has only admissible critical points.
We introduce the following condition on positive integers $a_0,\dots,a_n$ and $d$.

\begin{Cond} \label{cdcrit}
\begin{enumerate}
\item The set $\{0,\dots,n\}_{\wt = 1}$ is non-empty, that is, there is $i \in \{0,\dots,n\}$ such that $a_i = 1$.
\item $d \ge 2 a_i$ for any $i \in \{0,\dots,n\}$.
\item If $p = 2$ and $n$ is odd, then there are distinct $j, k \in \{0,\dots,n\}$ such that $d \ge 3 a_j, 3 a_k$.
\end{enumerate}
\end{Cond}

\begin{Lem} \label{lem:crittypeI}
Let $\mbP = \mbP (a_0,\dots,a_n)$ be a weighted projective space.
Suppose that $a_0,\dots,a_n$ and $d$ satisfy \emph{Condition \ref{cdcrit}} and that $d$ is divisible by $p$.
Then, a general section $f \in H^0 (\mbP, \mcO_{\mbP} (d))$ has only admissible critical points on $\mbP^{\circ}_{\wt = 1}$. 
\end{Lem}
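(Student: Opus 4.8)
The plan is to reduce the statement to a local calculation at an arbitrary point $\msq \in \mbP^{\circ}_{\wt = 1}$ and then use a dimension count over the linear system $|\mcO_{\mbP}(d)|$, in the same spirit as the quasi-smoothness arguments of Section~\ref{sec:qsm}. First I would fix $\msq \in \mbP^{\circ}_{\wt=1}$; by definition some coordinate of weight $1$, say $x_0$, is nonzero at $\msq$, so we may work in the affine chart $(x_0 \ne 0)$ with affine coordinates $y_1,\dots,y_n$ (the dehomogenizations of the $x_i$), and we may translate so that $\msq$ is the origin. In this chart a section $f \in H^0(\mbP,\mcO_{\mbP}(d))$ becomes a polynomial $\bar f(y_1,\dots,y_n)$, and since $x_0$ has weight $1$ the monomials of $\mbP$-degree $d$ dehomogenize to all monomials in the $y_i$ of appropriate (affine) degree; in particular the constant term, all linear terms, and all quadratic terms $y_i y_j$ are available because $d \ge 2a_i$ for each $i$ (Condition~\ref{cdcrit}.(2)) together with $d \ge a_i + a_j$, and in the $p=2$, $n$ odd case the needed cubic term $y_j^3$ (or $y_j^2 y_k$) is available by Condition~\ref{cdcrit}.(3). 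The section $\bar w$ of the relevant line bundle has a local generator $\tau$, and writing $f = \bar f \tau^m$ one checks (as noted after the definition of critical point) that the critical-point condition and the admissibility condition are intrinsic, so they depend only on the partial derivatives of $\bar f$.

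Next I would carry out the dimension count. Having a critical point at $\msq$ imposes exactly $n$ linear conditions on the coefficient vector of $f$ — namely $\prt \bar f/\prt y_1 = \cdots = \prt \bar f/\prt y_n = 0$ at the origin, i.e.\ the vanishing of the $n$ linear-term coefficients (here the hypothesis $p \mid d$ is irrelevant for the linear part but matters below for interpreting higher terms). For such $f$, admissibility is a condition on $\bar f$ that is \emph{generic} among polynomials with a critical point at the origin: in the case ``$n$ even, or $n$ odd with $p \ne 2$'' it says the Hessian (quadratic part) of $\bar f$ is nondegenerate, which is a nonempty Zariski-open condition on the quadratic coefficients, all of which are free parameters; in the $p=2$, $n$ odd cases it says the ideal of partials has colength $2$ together with a nondegeneracy/non-vanishing clause, which by the Remark after the definition of admissible critical point again holds generically once the cubic monomial $y_j^3$ is present (Condition~\ref{cdcrit}.(3)) and the relevant quadratic coefficient is nonzero. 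So for each $\msq$ the locus of $f$ with a \emph{non-admissible} critical point at $\msq$ has codimension $> n$ in $|\mcO_{\mbP}(d)|$, strictly more than $n = \dim \mbP \ge \dim \mbP^{\circ}_{\wt=1}$. Letting $\msq$ vary over the $n$-dimensional $\mbP^{\circ}_{\wt=1}$ and taking the incidence variety, its image in $|\mcO_{\mbP}(d)|$ is a proper closed subset; a general $f$ avoids it, which is exactly the assertion.

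The main obstacle, and where I would spend the real work, is the $p = 2$, $n$ odd cases (2)--(5) of the definition of admissible critical point: there ``admissible'' is not simply ``nondegenerate Hessian'' but a colength-two condition on the Jacobian ideal plus extra clauses distinguishing whether $m = 2$, $2^2 \nmid m$, or $2^2 \mid m$, and whether $s$ vanishes at $\msq$. I would handle this by invoking the normal form from the Remark: after a coordinate change, $\bar f = \alpha + \beta y_1^2 + y_2 y_3 + \cdots + y_{n-1}y_n + \gamma y_1^3 + (\text{h.o.t.})$, and I must exhibit, for general $f$ with a critical point at $\msq$, coefficients realizing $\gamma \ne 0$ (colength $2$) and the appropriate one of ``$\beta \ne 0$'' or ``$s(\msq) \ne 0$''. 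This needs Condition~\ref{cdcrit}.(3) to guarantee $y_1^3$ (hence the $\gamma$ term) is among the available monomials after the change of coordinates, and a short argument that the change of coordinates can be taken to be defined over the space of sections — i.e.\ that the relevant open conditions pull back to nonempty open conditions on the coefficients. The characteristic-zero-flavored parts (1) of the definition are routine by contrast. Throughout I would lean on the fact, already used implicitly in Section~\ref{sec:qsm}, that imposing pointwise algebraic conditions on a global section and then varying the point produces a constructible locus whose dimension can be bounded by the fiber-plus-base count, so that genericity of $f$ follows.
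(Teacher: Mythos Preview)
Your proposal is correct and follows essentially the same approach as the paper: fix a point of $\mbP^{\circ}_{\wt=1}$, normalize it to $(1\!:\!0\!:\!\cdots\!:\!0)$ via a weighted coordinate change, use Condition~\ref{cdcrit} to see that all needed low-order monomials are available, and then run the codimension count (critical point is codimension $n$, non-admissible is a proper closed subset hence codimension $\ge n+1$). The paper dispatches the case ``$p\ne 2$ or $n$ even'' by citing \cite[18~Proposition]{Kollar1} rather than arguing the Hessian openness directly, and in the $p=2$, $n$ odd case it simply writes down the explicit section $f = x_0^d + x_0^{d-2a_1}x_1^2 + x_0^{d-a_2-a_3}x_2x_3 + \cdots + x_0^{d-3a_1}x_1^3$ (realizing $\alpha=\beta=\gamma=1$ in the normal form), which is precisely the exhibit you describe; your reference to ``the section $\bar w$'' is a slip---here the section in question is $f$ itself.
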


\begin{proof}
We denote by $x_0,\dots,x_n$ the homogeneous coordinates of $\mbP$ of weight $a_0,\dots,a_n$.
Let $\msp \in \mbP^{\circ}_{\wt = 1}$ be a point.
Then, replacing coordinates, we may assume $\msp = (1\!:\!0\!:\!\cdots\!:\!0)$.
Condition \ref{cdcrit}.(2) implies that the restriction map
\[
H^0 (\mbP, \mcO_{\mbP} (d)) \to \mcO_{\mbP} (d) \otimes (\mcO_{\mbP}/\mfm_{\msp}^3)
\]
is surjective.
If $p \ne 2$ or $p = 2$ and $n$ is even, then the assertion follows from \cite[18 Proposition]{Kollar1}.
Suppose that $p = 2$ and $n$ is odd.
Let $W_{\msp} \subset H^0 (\mbP,\mcO_{\mbP} (d))$ be the set of sections which have a critical point at $\msp$.
It is easy to see that $W_{\msp}$ is of codimension $3$ in $H^0 (\mbP,\mcO_{\mbP} (d))$.
We will construct a section $f$ which is contained in $W_{\msp}$ and has an admissible critical point at $\msp$.
Note that $a_0 = 1$ since we arrange coordinates so that $\msp = (1\!:\!0\!:\!\cdots\!:\!0) \in \mbP^{\circ}_{\wt = 1}$.
By Condition \ref{cdcrit}.(3), we may assume $d \ge 3 a_1$.
We define
\[
f = x_0^d + x_0^{d-2 a_1} x_1^2 + x_0^{d-a_2-a_3} x_2 x_3 + \cdots + x_0^{d-a_{n-1} - a_n} x_{n-1} x_n + x_0^{d-a_3} x_1^3 + \cdots,
\]
which is an element of $W_{\msp}$ and it has an admissible critical point at $\msp$.
Therefore the set of sections which has a non-admissible critical point at $\msp$ is of codimension at least $n+1$ and the assertion follows from the dimension counting argument.
\end{proof}

Next, let $\mbP := \mbP (a_0,\dots,a_n,c)$ be a weighted projective space with homogeneous coordinates $x_0,\dots,x_n$ and $v$ of weight $a_0,\dots,a_n$ and $c$, respectively.
We fix $k \in \{1,2,3\}$.
For a homogeneous polynomial $f = f (x_0,\dots,x_3)$ of weight $d := c + a_k$, we denote by $Z_f$ the hypersurface in $\mbP$ defined by $v x_k + f = 0$. 

\begin{Lem}
Suppose that $a_0,\dots,a_n$ and $d := c + a_k$ satisfy \emph{Condition \ref{cdcrit}} and that $c$ is divisible by $p$.
Then, for a general homogeneous polynomial $f = f (x_0,\dots,x_n)$ of weighted degree $d$, the section $v \in H^0 (Z_f, \mcO_{Z_f} (c))$ has only admissible critical points on $Z_f \cap \mbP^{\circ}_{\wt = 1}$.
\end{Lem}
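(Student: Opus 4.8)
The claim is local along $Z_f \cap \mbP^{\circ}_{\wt = 1}$, and the strategy mirrors the dimension-counting argument in the proof of Lemma \ref{lem:crittypeI}: I will fix a point $\msq \in \mbP^{\circ}_{\wt = 1}$, bound from below the codimension of the set of $f$ of degree $d$ for which $\msq \in Z_f$ and either $Z_f$ is singular at $\msq$ or $v$ has a non-admissible critical point at $\msq$, and then run an incidence-variety count. So fix such $\msq$, pick $i$ with $a_i = 1$ and $x_i (\msq) \ne 0$ (possible since $\msq \in \mbP^{\circ}_{\wt = 1}$), and pass to the chart $(x_i \ne 0)$, normalising $x_i = 1$. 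There $\mcO_{\mbP} (c)$ is trivialised by $x_i^c$, so the section $v$ is just the function $v$, and $Z_f$ is the affine hypersurface $\{ v x_k + \bar f = 0 \}$ in $\mbA^{n+1}$ with coordinates $\{ x_j : j \ne i \} \cup \{ v \}$, where $\bar f = f|_{x_i = 1}$; thus near $\msq$ the critical points of $v$, and their admissibility type, are computed locally from the function $v|_{Z_f}$.

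\emph{Case $x_k (\msq) \ne 0$.} Here $x_k$ is a unit near $\msq$, so $Z_f$ is locally the graph $v = - \bar f / x_k$ over the $\{ x_j : j \ne i \}$-space; in particular $Z_f$ is automatically smooth at $\msq$, those $n$ functions form local coordinates on $Z_f$, and $v|_{Z_f} = - \bar f / x_k$. Hence a critical point of $v$ at $\msq$ is a critical point of $g := - \bar f / x_k$, and its admissibility type (computed with $\dim Z_f = n$) is read off from the $2$-jet of $g$, resp.\ from its $3$-jet when $p = 2$ and $n$ is odd. Multiplication by the unit $x_k^{-1}$ induces a linear automorphism of these jet spaces, so the passage from the jet of $\bar f$ to the jet of $g$ is a linear isomorphism; and by Condition \ref{cdcrit}.(2), resp.\ (2) and (3), the corresponding jet of $\bar f$ at $\msq$ can be prescribed freely as $f$ varies over forms of degree $d$, exactly as in the proof of Lemma \ref{lem:crittypeI}. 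Invoking that argument (via \cite[18 Proposition]{Kollar1}), the set of $f$ for which $g$, equivalently $v$, has a non-admissible critical point at $\msq$ has codimension at least $n + 1$; and since $\msq \in Z_f$ forces its $v$-coordinate to equal $- f (\msq) / x_k (\msq)$, such points $\msq$ range over a family of dimension only $n$, so the incidence count eliminates them for general $f$.

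\emph{Case $x_k (\msq) = 0$.} Then $i \ne k$ and $\bar f (\msq) = 0$. The $v$-component of the gradient of $v x_k + \bar f$ equals $x_k$, which vanishes at $\msq$; hence whenever $Z_f$ is smooth at $\msq$ the vector $\prt/\prt v$ lies in $T_{\msq} Z_f$, and as $dv (\prt/\prt v) = 1$ this gives $dv|_{Z_f} \ne 0$ at $\msq$, so $\msq$ is not a critical point of $v$ at all. It remains to exclude $Z_f$ being singular at $\msq$: this requires $\bar f (\msq) = 0$, $\prt \bar f / \prt x_k (\msq) = - v (\msq)$ and $\prt \bar f / \prt x_j (\msq) = 0$ for $j \ne i, k$, i.e.\ $n + 1$ conditions which Condition \ref{cdcrit}.(2) makes independent; since here $\msq$ ranges over the $n$-dimensional locus $(x_k = 0) \cap \mbP^{\circ}_{\wt = 1}$, the incidence count again eliminates such $\msq$ for general $f$.

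Putting the two cases together finishes the proof. The main obstacle is precisely this bookkeeping: because $Z_f$ itself moves with $f$, one must organise the incidence variety inside (parameter space of $f$) $\times \, \mbP^{\circ}_{\wt = 1}$, keep simultaneous track of the conditions ``$\msq \in Z_f$'', ``$Z_f$ smooth at $\msq$'' and ``the critical point of $v$ at $\msq$ is admissible'', and, crucially, notice that the constraint $\msq \in Z_f$ drops the effective dimension of the relevant locus of $\msq$ by one, so that the bound matches the dimension $n$ occurring in Lemma \ref{lem:crittypeI} (one also has to run through the admissible-critical-point types according to $p$ and the parity of $n$, but this is exactly the case analysis already performed there). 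The case $x_k (\msq) = 0$ contributes nothing, no critical point surviving there; the real content is the case $x_k (\msq) \ne 0$, which reduces cleanly to the weighted projective space computation already carried out for Lemma \ref{lem:crittypeI}.
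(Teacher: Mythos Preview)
Your argument is correct and follows the same incidence-variety/dimension-count strategy as the paper. The organizational difference is that the paper, in the case $x_k(\msq)\ne 0$, splits further according to whether $x_k$ coincides with the weight-$1$ coordinate defining the chart, and then expands $v=-\bar f/x_k$ as a power series by hand in each sub-case; you instead treat both uniformly by observing that multiplication by the unit $-x_k^{-1}$ is an automorphism of jet spaces, which reduces the codimension estimate directly to the $2$-jet surjectivity already used in Lemma~\ref{lem:crittypeI}. This is cleaner. The one spot where you are slightly quick is the case $p=2$ with $n$ odd: the explicit admissible example built in Lemma~\ref{lem:crittypeI} is for $\bar f$ itself, and transporting it to $g=-\bar f/x_k$ requires choosing the cubic direction $x_j$ (with $d\ge 3a_j$) to be distinct from $x_k$, which is possible because Condition~\ref{cdcrit}(3) supplies two such indices; the paper carries this adaptation out explicitly rather than by appeal to the earlier lemma.
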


\begin{proof}
We see that, on a point $\msp \in (x_k = 0) \cap Z_f$, $v$ (or its translation) can be chosen as a part of local coordinates, so that $v$ does not have a critical point at any point $\msp \in (x_k = 0) \cap Z_f$.

We set $U = (x_k \ne 0) \cap \mbP^{\circ}_{\wt = 1} \subset \mbP$. 
Let $\mcF$ be the affine space parametrizing the homogeneous polynomials in variables $x_0,\dots,x_3$ of weight $d$.
We define
\[
\begin{split}
\mcW^{\crit} &= \{\, (f, \msp) \mid \text{$v$ has a critical point at $\msp \in Z_f$}\, \} \subset \mcF \times U, \\
\mcW &= \{\, (f, \msp) \mid \text{$v$ has a non-admissible critical point at $\msp \in Z_f$}\, \} \subset \mcW.
\end{split}
\]
Let $\msp \in U$ be a point.
We will compute the dimension of the fibers $\mcW_{\msp}^{\crit}$ and $\mcW_{\msp}$ over $\msp$ of the projections $\mcW^{\crit} \to U$ and $\mcW \to U$, respectively.

By replacing coordinates other than $x_k$ and $v$, we may assume that  the coordinates other than $x_0,x_k,v$ vanish at $\msp$ and $a_0 = 1$.
We work on the open subset $U_0 = (x_0 \ne 0) \subset \mbP$ which we identify with the affine space with coordinates $x_1,\dots,x_n,v$.
 
Suppose that $k = 0$, that is, $Z_f$ is defined by $v x_0 + f = 0$.
Then, $Z_f \cap U_0$ is defined by $v + f(1,x_1,\dots,x_n) = 0$ and the point $\msp$ corresponds to $(0,\dots,0,\mu) \in U_0$ for some $\mu \in \K$.
We write $f (1,x_1,\dots,x_n) = \alpha + g_1 + g_2 + \cdots$, where $g_i = g_i (x_1,\dots,x_n)$ is homogeneous of degree $i$ (degree means the usual one; $\deg (x_i) = 1$).
Thus, $v = -f \in \mcW^{\crit}_{\msp}$ if and only if $\alpha = -\mu$ and $g_1 = 0$.
The latter imposes $n+1$ conditions.
If $p \ne 2$ or $p = 2$ and $n$ is even, then for an element $f \in \mcW^{\crit}_{\msp}$, we have $(f,\msp) \in \mcW_{\msp}$ if and only if the Hessian of $g_2$ is $0$, which imposes additional $1$ condition.
If $p = 2$ and $n$ is odd, then we can construct $f$ such that $g_1 = 0$, $g_2 = x_1^2 + x_2 x_3 + \cdots + x_{n-1} x_n$ and $g_3 = x_1^3$ since $d \ge 3 a_1$.
This shows $\mcW_{\msp} \ne \mcW^{\crit}_{\msp}$.
The above arguments show that the codimension of $\mcW_{\msp}$ in $\mcF \times U$ is at least $n+2$. 

Suppose that $k \ne 0$.
We may assume $k = 1$, that is, $Z_f$ is defined in $\mbP$ by $v x_1 + f = 0$.
Then, $Z_f \cap U_0$ is defined by $v x_1 + f (1,x_1,\dots,x_n) = 0$ and the point $\msp$ corresponds to $(\lambda,0,\dots,0,\mu)$ for some $\lambda,\mu \in \K$ with $\lambda \ne 0$.
We set $x^*_1 = x_1 - \lambda$.
We can write $f (1,x_1,\dots,x_n) = \alpha + g_1 + g_2 + \cdots$, where $g_i = g_i (x_1^*,x_2,\dots,x_n)$ is homogeneous of degree $i$.
Passing to the completion $\hat{\mcO}_{Z_f,\msp}$, we have
\[
\begin{split}
v &= - (x_1^* + \lambda)^{-1} (\alpha + g_1 + g_2 + g_3 + \cdots) \\
&= -(\lambda^{-1} - \lambda^{-2} x_1^* + \lambda^{-3} {x_1^*}^2 - \lambda^{-4} {x_1^*}^3 +  \cdots)(\alpha + g_1 + g_2 + g_3 + \cdots) \\
&= - \lambda^{-1} \alpha -  \lambda^{-1}(g_1 - \alpha \lambda^{-1} x_1^*) - \lambda^{-1} ( g_2 - \lambda^{-1} x_1^* g_1 + \alpha \lambda^{-2} {x_1^*}^2) \\
&\hspace{85pt} -\lambda^{-1} (g_3 - \lambda^{-1} x_1^* g_2 + \lambda^{-2} {x_1^*}^2 g_1 - \alpha \lambda^{-3} {x_1^*}^3) + \cdots
\end{split} 
\]
We see that $(f,\msp) \in \mcW^{\crit}_{\msp}$ if and only if $\lambda \mu + \alpha = 0$, $g_1 - \alpha \lambda^{-1} x_1^* = 0$.
The latter imposes $n+1$ condition since $d \ge 2 a_i$ for any $i$.
In case $(f,\msp) \in \mcW^{\crit}_{\msp}$, we have $\alpha = - \lambda \nu$ and $g_1 = -\nu x_1^*$, that is, $v = - \nu + \lambda^{-1} g_2 + \cdots$.
Thus, if $p \ne 2$ or $p = 2$ and $n$ is even, then we can conclude that $n+2$ conditions are imposed in order for $(f,\msp)$ to be contained $\mcW_{\msp}$.
If $p = 2$ and $n$ is odd, then we may assume $3 a_n \ge d$ and we can construct $f$ such that $\alpha = - \lambda \nu$, $g_1 = - \mu x_1^*$, $g_2 = x^*_1 x_2 + x_3 x_4 + \cdots + x_{n-2} x_{n-1} + x_n^2$ and $g_3 = x_n^3 + \cdots$. 
For such $f$, we see that $v$ has an admissible critical point at $\msp \in Z_f$.
This shows $\mcW_{\msp} \ne \mcW^{\crit}_{\msp}$ and thus the codimension of $\mcW_{\msp}$ in $\mcF \times U$ is at least $n+2$.
Therefore, by counting dimension, we conclude that $v$ has only admissible critical points on $Z_f \cap \mbP^{\circ}_{\wt = 1}$.
\end{proof}
 
\section{Orbifold Fano $3$-fold hypersurfaces: rationality} \label{sec:rat}

We say that a weighted hypersurface $X$ in $\mbP (a_0,\dots,a_{n+1})$ is a {\it weighted cone} if there exists $i \in \{0,\dots,n+1\}$ such that the defining equation of $X$ does not involve the variable $x_i$. 
The following rationality criterion is almost obvious.

\begin{Lem} \label{ratcri}
Let $a_0,a_1,\dots,a_{n+1}$ and $d$ be positive integers such that $a_0 \le a_1 \le \cdots \le a_{n+1}$.
Suppose that either $d < 2 a_{n+1}$ or $d = 2 a_{n+1} = 2 a_n$.
Then a weighted hypersurface of degree $d$ in $\mbP (a_0,a_1,\dots,a_{n+1})$ is rational if it is irreducible, reduced and is not a weighted cone.
\end{Lem}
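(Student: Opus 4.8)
The strategy is to exhibit an explicit birational map from a general such hypersurface to a (rational) weighted projective space, treating the two cases of the hypothesis separately. First consider the case $d < 2a_{n+1}$. Write the homogeneous coordinates as $x_0,\dots,x_n,y$ where $y$ has the largest weight $a_{n+1}$. Since $d < 2a_{n+1}$, the defining polynomial $F$ of degree $d$ can contain $y$ only to the first power: $F = y\, g(x_0,\dots,x_n) + h(x_0,\dots,x_n)$, where $g$ has weighted degree $d - a_{n+1}$ and $h$ has weighted degree $d$. For a general such $F$ the polynomial $g$ is nonzero (indeed generic), so the equation $F = 0$ can be solved for $y$ as $y = -h/g$ on the open set $(g \neq 0)$. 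This exhibits $X$ as birational to the locus $(g \neq 0)$ in $\mbP(a_0,\dots,a_n)$, which is an open subset of the weighted projective space $\mbP(a_0,\dots,a_n)$; the latter is rational (every weighted projective space is rational, being the quotient of $\mbP^n$ by a finite abelian group acting on coordinates, or more directly via the standard toric/affine chart). One must also record that $X$ being irreducible and reduced guarantees the graph of $y = -h/g$ really is (an open dense piece of) $X$ and not some lower-dimensional component, so the map is genuinely birational.

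Next consider the boundary case $d = 2a_{n+1} = 2a_n$. Now both $x_n$ and $y := x_{n+1}$ have weight $a := a_{n+1} = a_n$, and $d = 2a$. The defining polynomial can be written $F = A\, y^2 + B\, y + C$ where $A = A(x_n)$ is a scalar (degree $0$ in the remaining variables, i.e. $A$ is a constant times... actually $A$ has weighted degree $d - 2a = 0$, so $A$ is a constant), $B = B(x_0,\dots,x_n)$ has weighted degree $d - a = a$, and $C = C(x_0,\dots,x_n)$ has weighted degree $d = 2a$. So $F$ is a quadratic polynomial in $y$ over the field $\K(x_0,\dots,x_n)$ (equivalently over the rational function field of $\mbP(a_0,\dots,a_{n-1},a_n)$). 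The generic member has $A \neq 0$, and $X$ is then a conic bundle — or rather a double cover — over $\mbP(a_0,\dots,a_n)$ via the projection forgetting $y$. The standard trick: such a quadric in one variable has a rational point over the base function field \emph{provided we can find one section}; here the two weight-$a$ variables $x_n, y$ on equal footing means the hypersurface contains the linear stratum where $y$ and $x_n$ are proportional in a way that makes $F$ vanish, giving a rational point of the conic over $\K(x_0,\dots,x_{n-1})$ — and a conic with a rational point is rational. Concretely, I expect the cleanest argument to be: project away from $[0:\dots:0:1:(-1)]$ (a point in the $\mbP^1$ spanned by $x_n, y$) or use that the generic fibre of $X \to \mbP(a_0,\dots,a_{n-1})$ is a rational surface (a quadric surface with a line / a del Pezzo), hence rational over the base, so $X$ is rational. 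Being irreducible and reduced is again what lets us identify the generic fibre correctly.

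The main obstacle I anticipate is the boundary case $d = 2a_n = 2a_{n+1}$: one needs to produce a section (a rational point of the generic conic/quadric over the appropriate function field) purely from the numerical hypothesis, and to check that for a \emph{general} member the relevant leading coefficient $A$ is nonzero and the fibration is nondegenerate enough that the classical "conic with a rational point is rational" argument applies; some care is needed because the hypersurface may be singular and the base is only a weighted projective space, so one should phrase everything birationally over function fields rather than scheme-theoretically. The first case, by contrast, is essentially a one-line solve-for-$y$ computation. I would also remark that the hypothesis "irreducible and reduced" is used only to ensure the constructed rational map is dominant with the correct generic fibre; for a \emph{general} member in the families of interest this holds automatically, which is why the lemma is stated the way it is and why the implication $(1)\Rightarrow(2)$ in Theorem~\ref{mainthm2} follows.
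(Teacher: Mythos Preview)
Your Case 1 argument ($d < 2a_{n+1}$) is correct and identical to the paper's: since $x_{n+1}^2$ cannot appear, the equation is linear in $x_{n+1}$, and projection from $(0\!:\!\cdots\!:\!0\!:\!1)$ gives a birational map to $\mbP(a_0,\dots,a_n)$.

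In Case 2 you have overcomplicated matters and introduced errors. The paper reduces this case to Case 1 in one line: since $a_n = a_{n+1} =: a$ and $d = 2a$, the restriction of $F$ to the line $(x_0 = \cdots = x_{n-1} = 0)$ is a binary quadratic form in the two weight-$a$ coordinates $x_n, x_{n+1}$, which has a zero over the algebraically closed ground field. A linear change among these two coordinates moves that zero to $(0\!:\!\cdots\!:\!0\!:\!1)$, so we may assume $X$ passes through this point; then the $x_{n+1}^2$ term vanishes, $F = x_{n+1}\, f(x_0,\dots,x_n) + g(x_0,\dots,x_n)$, and the same projection as in Case 1 is birational.

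Your discussion contains the germ of this idea (find a $k$-point of $X$ on the $(x_n,x_{n+1})$-stratum and project from it), but the surrounding framing has problems. The specific point $[0\!:\!\cdots\!:\!0\!:\!1\!:\!-1]$ need not lie on $X$, so that suggestion is unjustified as stated. The map $X \to \mbP(a_0,\dots,a_n)$ obtained by forgetting $y$ is generically $2$-to-$1$ and does not by itself exhibit rationality; you would still need exactly the rational point you are looking for. And the generic fibre of $X \to \mbP(a_0,\dots,a_{n-1})$ is one-dimensional (a conic), not a ``rational surface''. None of this machinery is needed once you make the coordinate change above.
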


\begin{proof}
Let $X$ be an irreducible and reduced weighted hypersurface of degree $d$ in $\mbP (a_0,a_1,\dots,a_{n+1})$ which is not a weighted cone.
Note that we have $d \ge a_{n+1}$ because otherwise $X$ is a weighted cone.

We claim that the defining equation of $X$ can be written in a form $x_{n+1} f + g = 0$ where $f,g$ are non-zero homogeneous polynomials in variables $x_0,\dots,x_n$.
If the assumption $d < 2 a_{n+1}$ is satisfied, then this is clear.
If the assumption $d = 2 a_n = 2 a_{n+1}$ is satisfied, then we may assume that $X$ passes through the point $\msp_{n+1} = (0\!:\!\cdots\!:\!0\!:\!1)$ after possibly changing homogeneous coordinates suitably.
With this choice of coordinates, It is clear that the defining equation of $X$ is in the desired form. 

Now it is easy to see that the projection $X \ratmap \mbP (a_0,a_1,\dots,a_n)$ from the point $\msp_{n+1}$ gives a birational map and $X$ is rational.
\end{proof}

\begin{Prop} \label{prop:rat}
A general member of $20$ families \emph{No.104--106}, \emph{111--115}, \emph{118--121}, and \emph{123--130} is rational.
\end{Prop}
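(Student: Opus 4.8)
The plan is to run, for every member of these $20$ families, the projection argument from the proof of Lemma \ref{ratcri}, the only new point being that one must exclude the degenerate ``cone'' behaviour which does not occur for a general member but must be ruled out by hand for an arbitrary member. First I would check, directly from Table \ref{qswh} (equivalently from the weights and degree, with $a_0 \le \cdots \le a_4$), that each of the $20$ families satisfies either $d < 2a_4$ or $d = 2a_4 = 2a_3$; this is a routine inspection. One also notes that $d > a_4$ for every one of them.

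Fix such a member $X = X_d \subset \mbP(a_0,\dots,a_4)$ with defining polynomial $F$ in homogeneous coordinates $x_0,\dots,x_4$. The first step is to bring $F$ into the shape $F = x_4 f(x_0,\dots,x_3) + g(x_0,\dots,x_3)$, i.e.\ to make $\deg_{x_4} F \le 1$, after possibly a linear change of the coordinates of weight $a_4$ (which we may take to involve only $x_3,x_4$ when $d=2a_4=2a_3$, since then $a_3=a_4$). If $d < 2a_4$ this is automatic, since any monomial divisible by $x_4^2$ has weighted degree $\ge 2a_4 > d$. If $d = 2a_4 = 2a_3$, set $a := a_3 = a_4$; the part of $F$ lying in $\K[x_3,x_4]$ is then a binary quadratic form $q(x_3,x_4)$, there being no higher monomial in $x_3,x_4$ alone because $x_3^3$ has degree $3a > 2a$. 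If $q \not\equiv 0$ it has a root $(c_3\!:\!c_4)\in\mbP^1$ over $\K$; replacing $x_3,x_4$ by a basis $x_3',x_4'$ of the weight-$a$ part with $x_4'$ proportional to $c_3 x_3 + c_4 x_4$ kills the monomial $(x_4')^2$ in $F$, and if $q \equiv 0$ there is no such monomial to begin with. Either way $\deg_{x_4} F \le 1$. Moreover $\msp_4 := (0\!:\!\cdots\!:\!0\!:\!1) \in X$: since $d > a_4$, the polynomials $f,g$ are homogeneous of positive degree, so $F(\msp_4) = f(0) = g(0) = 0$ (in the case $d=2a_4=2a_3$, the point $\msp_4$ in the new coordinates lies on the line $\{x_0 = x_1 = x_2 = 0\}$, on which $F$ restricts to $q$, vanishing there).

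The key step is that $f \not\equiv 0$. Indeed, if $f \equiv 0$ then $F = g(x_0,\dots,x_3)$ depends only on $x_0,\dots,x_3$, so $\prt F/\prt x_4 \equiv 0$ and the affine cone $C_X \subset \mbA^5$ is singular along the entire coordinate axis $\{x_0 = \cdots = x_3 = 0\}$; equivalently $X$ is the cone over $\{g=0\}\subset\mbP(a_0,a_1,a_2,a_3)$ with vertex $\msp_4$, which is not quasi-smooth. This contradicts quasi-smoothness of a member of the family, and it is precisely this point that upgrades Lemma \ref{ratcri} (valid for a general member) to every member. Finally, the projection from $\msp_4$, namely $\pi \colon X \ratmap \mbP(a_0,a_1,a_2,a_3)$ forgetting $x_4$, has over the dense open set $\{f \ne 0\}$ the single point $x_4 = -g/f$ as its fibre, hence is birational; since $\mbP(a_0,a_1,a_2,a_3)$ is rational, so is $X$.

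I do not expect a serious obstacle here: the content is the finite verification of the numerical inequality for the $20$ families, the binary-quadratic-form coordinate change for the four families with $d = 2a_4 = 2a_3$ (families No.\,104, 106, 112, and 115), and the observation that quasi-smoothness prevents $X$ from being a cone with vertex $\msp_4$, which is exactly what makes the projection birational for \emph{every} member rather than only a general one. The one point requiring care is that without this quasi-smoothness (non-cone) input the statement would be false, as e.g.\ the irreducible reduced hypersurface $\{x_0^4 + x_1^4 + x_2^4 = 0\} \subset \mbP(1,1,1,2,3)$ is not rational; so the argument must genuinely use that a member of the family is quasi-smooth.
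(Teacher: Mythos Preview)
Your proposal is correct and follows essentially the same projection argument as the paper's Lemma \ref{ratcri}, which is all the paper invokes in its one-line proof. The difference is one of explicitness: the paper simply says ``This follows immediately from Lemma \ref{ratcri}'', whereas you spell out (i) the numerical check that each of the $20$ families satisfies $d < 2a_4$ or $d = 2a_4 = 2a_3$, (ii) the coordinate change in the four families with $d = 2a_4 = 2a_3$, and (iii) the use of quasi-smoothness to guarantee $f \not\equiv 0$. Point (iii) is a genuine addition: Lemma \ref{ratcri} as stated in the paper applies only to a \emph{general} member, while the Proposition is asserted for every member of the family; since members are by convention quasi-smooth (see the paper's introduction), your cone observation is exactly what closes this small gap. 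Your counterexample $\{x_0^4 + x_1^4 + x_2^4 = 0\} \subset \mbP(1,1,1,2,3)$ correctly illustrates that the statement fails without quasi-smoothness.

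One minor wording slip: in the $d = 2a_4 = 2a_3$ step you want the new coordinate point $(x_3'\!:\!x_4') = (0\!:\!1)$ to correspond to the old root $(c_3\!:\!c_4)$ of $q$, so that the coefficient of $(x_4')^2$ in the transformed $F$ is $q(c_3,c_4) = 0$; saying ``$x_4'$ proportional to $c_3 x_3 + c_4 x_4$'' describes the wrong linear combination. This does not affect the argument, since any invertible change of $(x_3,x_4)$ sending the root to $(0\!:\!1)$ will do.
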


\begin{proof}
This follows immediately from Lemma \ref{ratcri}.
\end{proof}

\section{Orbifold Fano $3$-fold hypersurfaces: stable non-rationality} \label{sec:stnonrat}

Among the $130$ families of orbifold Fano $3$-fold hypersurfaces, $20$ families are rational by Proposition \ref{prop:rat}, and stable non-rationality of a very general member of the 4 families No.~1, 3, 97, 98 have been known.
Furthermore, we do not treat cubic $3$-folds, the family No.~96.
The aim of this section is to prove stable non-rationality of very general members of the remaining $105$ families.
Although we do not treat the above mentioned families No.~1, 3, 97, 98, we remark that our argument can also be applied to those $4$ families.

We treat families No.~19, 103 and 122 separately in Sections \ref{sec:No19}, \ref{sec:No103} and \ref{sec:No122}, respectively.
The remaining $102$ families are divided into 2 groups named type I and type II, which consist of $65$ and $37$ families (see Tables \ref{table:typeI} and \ref{table:typeII}), and the proof of stable non-rationality will be given in Sections \ref{sec:typeI} and \ref{sec:typeII}, respectively.

\subsection{Type $\mathrm{I}$ families} \label{sec:typeI}

\begin{table}[h]
\begin{center}
\caption{Orbifold Fano 3-fold hypersurfaces of type $\mathrm{I}$}
\label{table:typeI}
\begin{tabular}{ccc|ccc}
\hline
No. & $X_d \subset \mbP (a_0,\dots,a_4)$ & $p$ & No. & $X_d \subset \mbP (a_0,\dots,a_4)$ & $p$ \\
\hline \quad \\[-4mm]
4 & $X_6 \subset \mbP (\underline{1},1,1,2,2)$ & $2$ & 64 & $X_{26} \subset \mbP (1,2,5,6,\underline{13})$ & $2$ \\[0.2mm]
5 & $X_7 \subset \mbP (\underline{1},1,1,2,3)$ & $7$ & 65 & $X_{27} \subset \mbP (1,2,5,\underline{9},11)$ & $3$ \\[0.2mm]
8 & $X_9 \subset \mbP (1,1,1,\underline{3},4)$ & $3$ & 67 & $X_{28} \subset \mbP (1,1,4,9,\underline{14})$ & $2$ \\[0.2mm]
10 & $X_{10} \subset \mbP (1,1,1,3,\underline{5})$ & $2$ & 68 & $X_{28} \subset \mbP (1,3,\underline{4},7,14)$ & $7$ \\[0.2mm]
11 & $X_{10} \subset \mbP (1,1,2,2,\underline{5})$ & $2$ & 70 & $X_{30} \subset \mbP (1,1,4,10,\underline{15})$ & $2$ \\[0.2mm]
13 & $X_{11} \subset \mbP (\underline{1},1,2,3,5)$ & $11$ & 71 & $X_{30} \subset \mbP (1,1,6,8,\underline{15})$ & $2$ \\[0.2mm]
14 & $X_{12} \subset \mbP (1,1,1,\underline{4},6)$ & $3$ & 73 & $X_{30} \subset \mbP (1,2,6,7,\underline{15})$ & $2$ \\[0.2mm]
15 & $X_{12} \subset \mbP (1,1,\underline{2},3,6)$ & $3$ & 74 & $X_{30} \subset \mbP (1,\underline{3},4,10,13)$ & $2$ \\[0.2mm]
17 & $X_{12} \subset \mbP (1,1,\underline{3},4,4)$ & $2$ & 76 & $X_{30} \subset \mbP (1,\underline{5},6,8,11)$ & $2$ \\[0.2mm]
20 & $X_{13} \subset \mbP (\underline{1},1,3,4,5)$ & $13$ & 79 & $X_{33} \subset \mbP (1,3,5,\underline{11},14)$ & $3$ \\[0.2mm]
21 & $X_{14} \subset \mbP (1,1,2,4,\underline{7})$ & $2$ & 80 & $X_{34} \subset \mbP (1,3,4,10,\underline{17})$ & $2$ \\[0.2mm]
22 & $X_{14} \subset \mbP (1,2,2,3,\underline{7})$ & $2$ & 81 & $X_{34} \subset \mbP (1,4,6,7,\underline{17})$ & $2$ \\[0.2mm] 
24 & $X_{15} \subset \mbP (1,1,2,\underline{5},7)$ & $3$ & 82 & $X_{36} \subset \mbP (1,1,5,12,\underline{18})$ & $2$ \\[0.2mm] 
25 & $X_{15} \subset \mbP (1,1,\underline{3},4,7)$ & $5$ & 84 & $X_{36} \subset \mbP (1,7,8,\underline{9},12)$ & $2$ \\[0.2mm]
26 & $X_{15} \subset \mbP (1,1,3,\underline{5},6)$ & $3$ & 85 & $X_{38} \subset \mbP (1,3,5,11,\underline{19})$ & $2$ \\[0.2mm]
27 & $X_{15} \subset \mbP (1,2,\underline{3},5,5)$ & $5$ & 86 & $X_{38} \subset \mbP (1,5,6,8,\underline{19})$ & $2$ \\[0.2mm]
28 & $X_{15} \subset \mbP (1,3,3,4,\underline{5})$ & $3$ & 87 & $X_{40} \subset \mbP (1,5,7,\underline{8},20)$ & $5$ \\[0.2mm]
34 & $X_{18} \subset \mbP (1,1,2,6,\underline{9})$ & $2$ & 88 & $X_{42} \subset \mbP (1,1,6,14,\underline{21})$ & $2$ \\[0.2mm]
36 & $X_{18} \subset \mbP (\underline{1},1,4,6,7)$ & $2$ & 89 & $X_{42} \subset \mbP (1,2,5,14,\underline{21})$ & $2$ \\[0.2mm]
41 & $X_{20} \subset \mbP (1,1,\underline{4},5,10)$ & $5$ & 91 & $X_{44} \subset \mbP (1,4,5,13,\underline{22})$ & $2$ \\[0.2mm]
45 & $X_{20} \subset \mbP (1,3,4,\underline{5},8)$ & $2$ & 92 & $X_{48} \subset \mbP (1,3,5,\underline{16},24)$ & $3$ \\[0.2mm]
46 & $X_{21} \subset \mbP (1,1,3,\underline{7},10)$ & $3$ & 93 & $X_{50} \subset \mbP (1,7,8,10,\underline{25})$ & $2$ \\[0.2mm]
47 & $X_{21} \subset \mbP (1,1,5,\underline{7},8)$ & $3$ & 94 & $X_{54} \subset \mbP (1,4,5,18,\underline{27})$ & $2$ \\[0.2mm]
48 & $X_{21} \subset \mbP (1,2,3,\underline{7},9)$ & $3$ & 95 & $X_{66} \subset \mbP (1,5,6,22,\underline{33})$ & $2$ \\[0.2mm]
49 & $X_{21} \subset \mbP (1,3,5,6,\underline{7})$ & $3$ & 99 & $X_{10} \subset \mbP (1,1,2,3,\underline{5})$ & $2$ \\[0.2mm]
50 & $X_{22} \subset \mbP (1,1,3,7,\underline{11})$ & $2$ & 101 & $X_{22} \subset \mbP (1,2,3,7,\underline{11})$ & $2$ \\[0.2mm]
51 & $X_{22} \subset \mbP (1,1,4,6,\underline{11})$ & $2$ & 102 & $X_{26} \subset \mbP (1,2,5,7,\underline{13})$ & $2$ \\[0.2mm]
52 & $X_{22} \subset \mbP (1,2,4,5,\underline{11})$ & $2$ & $107$ & $X_6 \subset \mbP (1,1,2,2,\underline{3})$ & $2$ \\[0.2mm]
53 & $X_{24} \subset \mbP (1,1,3,\underline{8},12)$ & $3$ & 109 & $X_{15} \subset \mbP (1,2,3,\underline{5},7)$ & $3$  \\[0.2mm]
59 & $X_{24} \subset \mbP (1,3,6,7,\underline{8})$ & $3$ & 110 & $X_{21} \subset \mbP (1,3,5,\underline{7},8)$ & $3$ \\[0.2mm]
61 & $X_{25} \subset \mbP (1,4,\underline{5},7,9)$ & $5$ & 116 & $X_{10} \subset \mbP (1,2,3,4,\underline{5})$ & $2$ \\[0.2mm]
62 & $X_{26} \subset \mbP (1,1,5,7,\underline{13})$ & $2$ & 117 & $X_{15} \subset \mbP (1,3,4,\underline{5},7)$ & $3$ \\[0.2mm]
63 & $X_{26} \subset \mbP (1,2,3,8,\underline{13})$ & $2$ & \\[0.2mm]
\end{tabular}
\end{center}
\end{table}

We consider families listed in Table \ref{table:typeI}.
The aim is to construct a subspace $T$ of the parameter space $\mbP^M$ of each family $\mcX \to \mbP^M$ satisfying Condition \ref{cd:T}.

We explain how to read Table \ref{table:typeI}.
In the 2nd and 5th columns, the weighted degree $d$ of the hypersurface and the ambient space $\mbP (a_0,\dots,a_4)$ is given.
Moreover, there is indicated a unique underlined weight.
We choose homogeneous coordinates $x,y,z,t,w$ of $\mbP (a_0,\dots,a_4)$ so that $w$ corresponds to the underlined weight and the others are ordered as $\wt (x) \le \wt (y) \le \wt (z) \le \wt (t)$.
For example, for family No.~4, $w,x,y,z,t$ are the coordinates of $\mbP (\underline{1},1,1,2,2)$ of weight $1,1,1,2,2$, respectively, and for family No.~8, $x,y,z,w,t$ are coordinates of $\mbP (1,1,1,\underline{3},4)$ of weights $1,1,1,3,4$, respectively.

In the following we treat type $\mathrm{I}$ families uniformly.
Let $\mcX \to \mbP^M$ be a type $\mathrm{I}$ family of weighted hypersurfaces of weighted degree $d$ in $\tilde{\mbP} = \mbP (a_0,\dots,a_4)$.
We assume that $a_4$ is the underlined weight and let $x_0,\dots,x_3$ and $w$ be the coordinates of $\mbP (a_0,\dots,a_4)$ of weight $a_0,\dots,a_3$ and $a_4$, respectively (When we treat a specific family individually, we use coordinates $x,y,z,t$ instead of $x_0,\dots,x_3$).
We work over an algebraically closed field $\K$ of characteristic $p$, where $p$ is the prime number given in the 3rd and 6th columns.
Let $\Lambda$ be the set of monomials in variables $x_0,\dots,x_3$ of weighted degree $d$.
We consider weighted hypersurfaces $X$ defined in $\tilde{\mbP}$ by an equation of the form
\[
w^m + f = 0,
\]
where $m = d/a_4$ is a positive integer and $f \in \langle \Lambda \rangle_{\K}$.
Those hypersurfaces are parametrized by $T_{\K}$, where $T \cong \mbA^N$ with $N = |\Lambda|$ is the parameter space of polynomials in $\langle \Lambda \rangle_{\mbZ}$.
Let $\pi \colon X \to \mbP = \mbP (a_0,\dots,a_4)$ be the projection which is the cyclic cover of $\mbP$ branched along the divisor $(f = 0) \subset \mbP$.
Note that the covering degree $m$ is divisible by $p$.
We set $\mbP^{\circ} = \mbP^{\circ}_{\wt = 1}$ and $X^{\circ} = \pi^{-1} (\mbP^{\circ})$.
In the following, we assume that $X$ is general, that is, $f$ is general in $\langle \Lambda \rangle_{\K}$.

\begin{Lem} \label{lem:typeIisolsing}
$X$ has only isolated cyclic quotient singularities along $X \setminus X^{\circ}$.
\end{Lem}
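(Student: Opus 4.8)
The plan is to analyze the singularities of $X$ by separately treating the open locus $X^{\circ} = \pi^{-1}(\mbP^{\circ})$ and its complement, and to show that on $X \setminus X^{\circ}$ the only singularities that can occur are the isolated cyclic quotient singularities coming from the singularities of the ambient weighted projective space $\tilde{\mbP}$. First I would recall that the closed set $X \setminus X^{\circ}$ is contained in the union of the coordinate strata $\Pi^{*}_{I,\tilde{\mbP}}$ and $\Pi^{*}_{I \cup \{n+1\}, \tilde{\mbP}}$ with $I \subset \{0,\dots,3\}_{\wt > 1}$, i.e.\ the locus where all weight-one coordinates vanish. The strategy is: (1) using the quasi-smoothness criteria of Section~\ref{sec:qsm} (specifically Lemmas~\ref{lem:qsmcritypeI}, \ref{lem:qsmast}, and \ref{lem:qsmcri2}), show that a general $X$ is quasi-smooth along $\tilde{\mbP} \setminus \mbP^{\circ}$; (2) invoke the standard fact that a quasi-smooth weighted hypersurface of dimension $\ge 3$ has only cyclic quotient singularities, arising at points lying on the singular locus of the ambient space $\tilde{\mbP}$; and (3) check that at each such point the singularity is isolated and terminal, which for the families in Table~\ref{table:typeI} is part of the classification data (\cite{Fletcher}, \cite{BS1}, \cite{BS2}).

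The heart of the argument is step (1). For each family in Table~\ref{table:typeI} one must verify that the set $\Lambda$ of monomials of degree $d$ in $x_0,\dots,x_3$ satisfies condition $(*)_{I',\mbP}$ for every non-empty $I' \subset I$, where $I$ ranges over the non-empty subsets of $\{0,\dots,3\}_{\wt > 1}$; by Lemma~\ref{lem:qsmcritypeI} this yields quasi-smoothness of $\{w^m+f=0\}$ along $\Pi_{I \cup \{n+1\},\tilde{\mbP}}$ for a general $f$, and taking the union over all relevant $I$ gives quasi-smoothness along all of $\tilde{\mbP} \setminus \mbP^{\circ}$. The verification of $(*)_{I,\mbP}$ for $|I| \le 2$ is handled by the explicit monomial criteria in Lemma~\ref{lem:qsmast}: one exhibits, for each $i$ with $a_i>1$, a monomial of the form $x_i^k$ with $p \nmid k$ or $x_i^l x_j$, and similarly a suitable pair of monomials for each two-element subset. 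Since each family has at most a few weights exceeding $1$ (often just one or two after excluding the weight-one coordinates), only finitely many, short checks are needed per family; these are the routine computations I would relegate to case-by-case inspection of the weight data rather than carrying out here.

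The main obstacle I anticipate is not any single deep difficulty but rather the bookkeeping: one must confirm, family by family, that the numerical relations among $d$ and the $a_i$ actually force the existence of the required monomials in $\Lambda$ — in particular that for the prime $p$ listed in Table~\ref{table:typeI} the pure-power monomials $x_i^{k}$ have exponent $k = d/a_i$ \emph{not} divisible by $p$, or else that a suitable mixed monomial $x_i^l x_j$ of degree $d$ exists. A potential subtlety is that for some weights $a_i$ with $a_i \mid d$ but $p \mid (d/a_i)$ one cannot use the pure power and must instead produce a monomial $x_i^l x_j$; verifying solvability of $l a_i + a_j = d$ for some $j \ne i$ is where one uses that the family is well-formed and quasi-smooth in characteristic $0$ (the classical criterion of \cite{Fletcher} guarantees such monomials exist). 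Once these monomial existences are established, steps (2) and (3) are immediate: quasi-smoothness plus $\dim X = 3 \ge 3$ gives cyclic quotient singularities by the general theory, and isolatedness together with terminality is read off from the known classification of the $130$ families. I would therefore organize the proof as a short general argument followed by a table or remark recording, for each listed family, the monomials witnessing $(*)_{I,\mbP}$.
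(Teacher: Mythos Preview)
Your proposal is correct and follows essentially the same route as the paper: reduce to showing quasi-smoothness of $X$ along $X \setminus X^{\circ}$, invoke Lemma~\ref{lem:qsmcritypeI} to reduce this to verifying $(*)_{I',\mbP}$ for every non-empty $I' \subset \{0,1,2,3\}_{\wt>1}$, dispatch the cases $|I'|\le 2$ via Lemma~\ref{lem:qsmast}, and for the families where $|\{0,1,2,3\}_{\wt>1}|=3$ record an explicit triple of monomials (the paper does this in Table~\ref{table:typeIqsm}). Two minor remarks: your phrasing ``where $I$ ranges over the non-empty subsets of $\{0,\dots,3\}_{\wt>1}$'' is redundant, since it suffices to take the single maximal $I=\{0,1,2,3\}_{\wt>1}$ and check all $I'\subset I$; and your step (3) invoking terminality from the classification is unnecessary, since once quasi-smoothness is established the singularities along $X\setminus X^{\circ}$ are automatically the cyclic quotient singularities inherited from the ambient $\tilde{\mbP}$, and their isolatedness is a combinatorial fact about the weights that does not depend on the characteristic.
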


\begin{proof}
It is enough to show that $X$ is quasi-smooth along $X \setminus X^{\circ}$.
We set $I = \{0,1,2,3\}_{\wt > 1}$.
We have 
\[
X \setminus X^{\circ} = X \cap \left(\Pi^*_{I, \tilde{\mbP}} \cup \Pi^*_{I \cup \{4\},\tilde{\mbP}} \right).
\]
Hence, by Lemma \ref{lem:qsmcritypeI}, it is enough to show that $\Lambda$ satisfies $(*)_{I',\mbP}$ for any non-empty subset $I' \subset I$.

By Lemma \ref{lem:qsmast}, it is straightforward to check $(*)_{I',\mbP}$ for any $I' \subset I$ with $|I'| \le 2$ and we leave it to readers (see Examples \ref{ex:typeIqsm} below).
In particular, the proof is completed for families such that $|I| \le 2$.
In Table \ref{table:typeIqsm}, we list families (together with a set of monomials) such that $|I| \ge 3$.
For any such family, we have $|I| = 3$ and it remains to check $(*)_{I,\mbP}$ for $I = \{0,1,2,3\}_{\wt > 1}$.
Let $\Xi$ be the set of monomials in the 2nd or 4th column and let $J$ be the set of $3$ coordinates indicated as a subscript of $\Xi$.
Then it is straightforward to check that $\det (M_{\Xi,J}) |_{\Pi^*_{I,\mbP}}$ is a nonzero monomial, that is, $\Lambda$ satisfies $(*)_{I,\mbP}$.
This completes the proof.
\end{proof}

\begin{table}[h]
\begin{center}
\caption{Monomials proving quasi-smoothness along $X \setminus X^{\circ}$}
\label{table:typeIqsm}
\begin{tabular}{cc|cc}
\hline
No. & Monomials& No. & Monomials \\
\hline \quad \\[-4mm]
13 & $\{t^2 x,t z^2,z^3 y\}_{x,y,t}$ &
79 & $\{y^{11},t^2 z,z y^9 x\}_{x,y,z}$ \\[0.4mm]
20 & $\{t^2 y,z^2 t,z^3 x\}_{x,y,t}$ & 
80 & $\{t^3 z,z^6 t,y^{11} x\}_{x,z,t}$ \\[0.4mm]
22 & $\{y^7,z^7,t y^5 x\}_{x,y,z}$ & 
81 & $\{t^4 z,z^5 y,t^3 y^3 x\}_{x,y,z}$ \\[0.4mm]
27 & $\{y^7 x,z^3,t^3\}_{x,z,t}$ & 
84 & $\{t^3,y^4 z,y^5 x\}_{x,z,t}$ \\[0.4mm]
28 & $\{y^5,z^5,t^2 z^2 x\}_{x,y,z}$ & 
85 & $\{t^3 z,z^7 y,z^2 y^9 x\}_{x,y,z}$ \\[0.4mm]
36 & $\{z^3,t^2 y,t z y x\}_{x,y,z}$ & 
86 & $\{t^4 z,y^6 t,t^4 y x\}_{x,z,t}$ \\[0.4mm]
45 & $\{z^5,y^4 t,t^2 y x\}_{x,z,t}$ & 
87 & $\{t^2,y^8,z^2 y^5 x\}_{x,y,t}$ \\[0.4mm]
48 & $\{t^2 z,z^7,y^{10} x\}_{x,z,t}$ &
89 & $\{t^3,y^{21},z y^{18} x\}_{x,y,t}$ \\[0.4mm]
49 & $\{y^7,z^3 t,z^4 x\}_{x,y,t}$ &
91 & $\{y^{11},t^3 z, t z^6 x\}_{x,y,z}$ \\[0.4mm]
52 & $\{y^{11},t^2 z^3,t y^8 x\}_{x,y,z}$ &
92 & $\{t^2,z^9 y,x z y^{14}\}_{x,y,t}$ \\[0.4mm]
59 & $\{z^4,y^8,t^2 y^3 x\}_{x,y,z}$ &
93 & $\{t^5,y^6 z,y^7 x\}_{x,z,t}$ \\[0.4mm]
61 & $\{t^2 z,z^3 y,y^6 x\}_{x,y,t}$ &
94 & $\{t^3,z^{10} y,z y^{12} x\}_{x,y,t}$ \\[0.4mm]
63 & $\{y^{13},y^9 t,y^{11} z x\}_{x,y,t}$ &
95 & $\{t^3,z^{11},y^3 x\}_{x,z,t}$ \\[0.4mm]
64 & $\{y^{13},y^{10} t,z^5 x\}_{x,y,t}$ &
101 & $\{y^{11},z^5 t,t^3 x\}_{x,y,t}$  \\[0.4mm]
65 & $\{z^5 y,t^2 z,y^{13} x\}_{x,y,z}$ &
102 & $\{y^{13},t^2 z,z^5 x\}_{x,y,z}$  \\[0.4mm]
68 & $\{t^2,z^4,y^9 x\}_{x,z,t}$ &
109 & $\{z^5,y^4 t,y^7 x\}_{x,y,t}$ \\[0.4mm]
73 & $\{y^{15},z^5,t y^{11} x\}_{x,y,z}$ &
110 & $\{y^7,t^2 z,z^4 x\}_{x,y,z}$ \\[0.4mm]
74 & $\{z^3,z y^5,t z^4 x\}_{x,y,z}$ &
116 & $\{y^5,z^2 t,z^3 x\}_{x,y,t}$ \\[0.4mm]
76 & $\{y^5,t^2 z,t y^3 x\}_{x,y,z}$ &
117 & $\{y^5,z^2 t,t^2 x\}_{x,y,t}$
\end{tabular}
\end{center}
\end{table}

\begin{Ex} \label{ex:typeIqsm}
We consider family No.~22.
Let $X = X_{14} \subset \mbP (1,2,2,3,\underline{7})$ be a weighted hypersurface defined by $w^2 + f_{14} (x,y,z,t) = 0$, where $f_{14} \in \K [x,y,z,t]$ is general and $\K$ is of characteristic $2$. 
We set $\mbP = \mbP (1,2,2,3)$ and we have $\{0,1,2,3\}_{\wt > 1} = \{1,2,3\}$.
The existence of monomials $y^7, z^7,t^4 y \in \Lambda$ shows that $\Lambda$ satisfies $(*)_{I,\mbP}$ for any $I \subset \{1,2,3\}$ with $|I| = 1$.
For $I \subset \{1,2,3\}$ with $|I| = 2$, we have
\[
\left| \frac{\prt \{y^7,z^7\}}{\prt \{y,z\}} \right|_{\Pi^*_{y,z}} = y^6 z^6, \ 
\left| \frac{\prt \{y^7, t y^5 x\}}{\prt \{x,y\}} \right|_{\Pi^*_{y,t}} = t y^{11}, \ 
\left| \frac{\prt \{y^7, t z^5 x\}}{\prt \{x,z\}} \right|_{\Pi^*_{y,t}} = t z^{11}.
\]
Here (and after),
\[
\left| \frac{\prt \{y^7,z^7\}}{\prt \{y,z\}} \right|_{\Pi^*_{y,z}} = \det \left(M_{\{y^7,z^7\}, \{y,z\}}\right)|_{\Pi^*_{y,z}}
\]
and similarly for the others.
Finally, For $I = \{1,2,3\}$, we have
\[
\left| \frac{\prt \{y^7, z^7, t y^5 x\}}{\prt \{x,y,z\}} \right|_{\Pi^*_{y,z,t}} = t y^{11} z^6,
\]
The above computations show that $\Lambda$ satisfies $(*)_{I,\mbP}$ for any non-empty subset of $\{0,1,2,3\}_{\wt > 1}$ and thus $X$ is quasi-smooth along $X \setminus X^{\circ}$.
\end{Ex}

\begin{Lem} \label{lem:typeIcrit}
The section $f \in H^0 (\mbP, \mcO_{\mbP} (d))$ has only admissible critical points on $\mbP^{\circ}$.
\end{Lem}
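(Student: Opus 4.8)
The plan is to apply Lemma \ref{lem:crittypeI} directly, after checking that the numerical data $a_0,\dots,a_3$ and $d$ attached to each type $\mathrm{I}$ family satisfies Condition \ref{cdcrit} and that $p \mid d$. Recall that for a type $\mathrm{I}$ family we have written $d = m a_4$ with $p \mid m$, so in particular $p \mid d$, and the section whose critical points we must control is $f \in H^0(\mbP,\mcO_{\mbP}(d))$ on $\mbP = \mbP(a_0,\dots,a_3)$. So the content of the lemma is precisely the hypothesis of Lemma \ref{lem:crittypeI} with $n = 3$, once we verify Condition \ref{cdcrit} for the weights $(a_0,a_1,a_2,a_3)$ and degree $d$.

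First I would note that Condition \ref{cdcrit}.(1) — that some $a_i = 1$ — holds for every family in Table \ref{table:typeI}: inspection of the list shows that in each case at least one of the four non-underlined weights equals $1$ (indeed in most families $a_0 = 1$, and in the few where the underlined weight sits among the smallest, one still reads off an $a_i=1$ among $a_0,\dots,a_3$). Next, Condition \ref{cdcrit}.(2) requires $d \ge 2a_i$ for all $i \in \{0,1,2,3\}$; since $d = m a_4 \ge 2 a_4 \ge 2 a_i$ whenever $a_i \le a_4$, this is automatic from $m \ge 2$ (which follows from $p \mid m$) together with the fact that $a_4$ is among the largest weights. One should double-check the handful of families where the underlined weight is not maximal (e.g.\ No.~8, 14, 15, etc.), but in those the remaining weights are small enough that $d \ge 2a_i$ still holds by a direct numerical check.

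The remaining point is Condition \ref{cdcrit}.(3): when $p = 2$ and $n = 3$ (so $n$ is odd), we must exhibit two distinct indices $j,k \in \{0,1,2,3\}$ with $d \ge 3a_j$ and $d \ge 3a_k$. This is the step that requires the most care, since it is a genuine constraint rather than something automatic. For the families with $p = 2$ in Table \ref{table:typeI}, I would go through them and observe that in each case at least two of the weights $a_0,\dots,a_3$ are small relative to $d$: typically $a_0 = 1$ gives $d \ge 3$ trivially, and one more weight — often $a_1$ or $a_2$ — satisfies $3a_j \le d$ because $d$ is large (e.g.\ for No.~67, $X_{28} \subset \mbP(1,1,4,9,\underline{14})$ one has $d = 28 \ge 3 \cdot 4 = 12$ and $d \ge 3\cdot 1$; for No.~95, $X_{66}\subset\mbP(1,5,6,22,\underline{33})$, $d = 66 \ge 3\cdot 5$ and $\ge 3\cdot 6$). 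A uniform way to see this: since $d = m a_4$ with $m \ge 2$ even, in fact $m \ge 4$ is impossible to guarantee, but when $m = 2$ one has $d = 2a_4$ and the well-formedness plus quasi-smoothness constraints force the two smallest weights to be $\le \tfrac{2}{3}a_4$; I would instead simply tabulate the check family by family, as the numbers are all explicit and small.

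Once Condition \ref{cdcrit} is verified (and $p \mid d$ noted), the conclusion is immediate:

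\begin{proof}
For a type $\mathrm{I}$ family we have $d = m a_4$ with $p \mid m$, hence $p \mid d$. We apply Lemma \ref{lem:crittypeI} with $\mbP = \mbP(a_0,\dots,a_3)$ and $n = 3$. It remains to check that $a_0,\dots,a_3$ and $d$ satisfy Condition \ref{cdcrit}. Condition \ref{cdcrit}.(1) holds since at least one of $a_0,\dots,a_3$ equals $1$ for every family in Table \ref{table:typeI}. For Condition \ref{cdcrit}.(2), since $m \ge 2$ we have $d = m a_4 \ge 2 a_4 \ge 2 a_i$ for every $i$ (checking directly the few families in which the underlined weight $a_4$ is not the largest). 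For Condition \ref{cdcrit}.(3), which is needed only when $p = 2$, one checks family by family that there are two distinct indices $j,k \in \{0,1,2,3\}$ with $d \ge 3 a_j$ and $d \ge 3 a_k$; this holds in each case since the two smallest of the weights $a_0,\dots,a_3$ are at most $d/3$. Thus Condition \ref{cdcrit} is satisfied, and Lemma \ref{lem:crittypeI} gives that a general $f \in H^0(\mbP,\mcO_{\mbP}(d)) = \langle \Lambda \rangle_{\K}$ has only admissible critical points on $\mbP^{\circ} = \mbP^{\circ}_{\wt = 1}$.
\end{proof}
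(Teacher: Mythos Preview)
Your proof is correct and follows essentially the same approach as the paper: verify Condition \ref{cdcrit} for each type I family and then invoke Lemma \ref{lem:crittypeI}. The paper's own proof is terser (``It is straightforward to check that Condition \ref{cdcrit} is satisfied''), while you spell out the numerical checks in more detail; your observation that $d = m a_4 \ge 2 a_4$ handles Condition \ref{cdcrit}.(2) automatically whenever the underlined weight is maximal is a nice shortcut.
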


\begin{proof}
It is straightforward to check that Condition \ref{cdcrit} is satisfied.
Thus, the assertion follows from Lemma \ref{lem:crittypeI}.
\end{proof}

\begin{Prop} \label{prop:typeIcheckT}
Any type $\mathrm{I}$ family $\mcX \to \mbP^M_{\mbZ}$ together with $T$ satisfies \emph{Condition \ref{cd:T}}.
\end{Prop}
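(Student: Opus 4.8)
The plan is to verify the two parts of Condition \ref{cd:T} separately, deriving part (2) from Proposition \ref{prop:constr} by feeding in Lemmas \ref{lem:typeIisolsing} and \ref{lem:typeIcrit}. I will use the notation already fixed in this subsection: $\mbP = \mbP(a_0,\dots,a_3)$, $b = a_4$ (the underlined weight), $m = d/b$, $\mcL = \mcO_{\mbP}(b)$, $\pi \colon X \to \mbP$ the cyclic $m$-cover branched along $(f = 0)$, and $T \cong \mbA^N_{\mbZ}$ with $N = |\Lambda|$ parametrising $f \in \langle \Lambda \rangle$.

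For Condition \ref{cd:T}.(1) I would work over $\mbC$. The open set $\mbP^{\circ} = \mbP^{\circ}_{\wt = 1}$ is smooth (each chart $(x_i \ne 0)$ with $a_i = 1$ is an affine space), and $\mcL(\Lambda)$ is base point free on $\mbP^{\circ}$ since $x_i^d \in \Lambda$ for a weight-one index $i$; so for general $f \in \langle \Lambda \rangle_{\mbC}$ Bertini's theorem makes $(f=0) \cap \mbP^{\circ}$ smooth, whence $X^{\circ} = \pi^{-1}(\mbP^{\circ})$, being a cyclic cover of the smooth variety $\mbP^{\circ}$ branched along a smooth divisor, is smooth (this is where characteristic zero enters: the term $m w^{m-1}$ in the Jacobian of $w^m + f$, which is exactly what vanishes in characteristic $p$). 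Along $X \setminus X^{\circ}$, quasi-smoothness follows from the characteristic-zero version of Lemma \ref{lem:typeIisolsing} — the conditions $(\ast)_{I',\mbP}$ being witnessed by the same monomials, now with no divisibility constraint (cf.\ the criterion of Iano-Fletcher \cite{Fletcher}). Hence a general member of $T_{\mbC}$ is a well-formed, quasi-smooth weighted hypersurface of one of the $130$ numerical types, and therefore has only terminal, i.e.\ isolated cyclic quotient, singularities.

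For Condition \ref{cd:T}.(2) I would fix an uncountable algebraically closed field $\K$ of characteristic $p$ (the prime attached to the family in Table \ref{table:typeI}); then $T_{\K} \setminus T_{\K}^{\operatorname{indep}}$ is a countable union of divisors, so $T_{\K}^{\operatorname{indep}} \ne \emptyset$. Taking $X' = (w^m + f = 0)$ a very general member of $T_{\K}$ (so its parameter lies in $T_{\K}^{\operatorname{indep}}$ and $f$ is general enough for Lemmas \ref{lem:typeIisolsing} and \ref{lem:typeIcrit}), Lemma \ref{lem:typeIisolsing} gives that the singularities of $X'$ along $X' \setminus {X'}^{\circ}$ are isolated cyclic quotient singularities, while over the smooth open set $\mbP^{\circ}$ the singular locus of $X'$ is $\pi^{-1}$ of the set of critical points of $f$, admissible by Lemma \ref{lem:typeIcrit} and hence isolated; so $X'$ has at most isolated singular points. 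I would then apply Proposition \ref{prop:constr} with $Z = \mbP$, $Z^{\circ} = \mbP^{\circ}$, $\mcL = \mcO_{\mbP}(b)$, $m = d/b$: under the identification $Z = \{\bar w + f = 0\} \cong \mbP(a_0,\dots,a_3)$ the section $\bar w \in H^0(Z,\mcL^m)$ equals $-f$, so Condition \ref{cdgen}.(2) is precisely the conjunction of Lemmas \ref{lem:typeIcrit} and \ref{lem:typeIisolsing}; Condition \ref{cdgen}.(1) holds because $Z$ is a weighted projective space with a weight-one coordinate and with $\gcd(a_1,a_2,a_3) = 1$ (forced by the terminality of $X$, or read off Table \ref{table:typeI}); Condition \ref{cdgen}.(3) is $n = 3$; and Condition \ref{cdgen}.(4) holds because $d - a_{\operatorname{sum}} = a_4 - \iota \ge 0$, where $\iota = \sum_{i=0}^{4} a_i - d$ is the Fano index, while $H^0(Z, \mcO_Z(a_4 - \iota)) \ne 0$ since $Z$ has a weight-one coordinate. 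Proposition \ref{prop:constr}, together with the resolution constructed in its proof (which is an isomorphism over the smooth locus of $X'$), then yields $\varphi' \colon Y' \to X'$ with SNC exceptional divisor having smooth rational components and with $H^0(Y', \Omega_{Y'}^{2}) \ne 0$, which is exactly Condition \ref{cd:T}.(2)(a),(b).

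The analytic content sits entirely in Lemmas \ref{lem:typeIisolsing} and \ref{lem:typeIcrit} (and, behind them, in Sections \ref{sec:qsm} and \ref{sec:critical}), so the proof of the Proposition is essentially bookkeeping. The step I expect to need the most care is matching the type $\mathrm{I}$ data to the hypotheses of Proposition \ref{prop:constr} — identifying $Z$ with the weighted $\mbP^3$ obtained by eliminating $\bar w$, checking its well-formedness, and verifying $d \ge a_{\operatorname{sum}}$ (equivalently $a_4 \ge \iota$) — uniformly across all $65$ families of Table \ref{table:typeI}; the $\mbC$-part of (1) is also mildly delicate because it applies Bertini on $\mbP^{\circ}$ rather than on all of $\mbP$ and then invokes the classification to upgrade ``quasi-smooth'' to ``terminal''.
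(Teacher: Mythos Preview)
Your proof is correct and follows essentially the same approach as the paper: verify Condition \ref{cdgen} via Lemmas \ref{lem:typeIisolsing} and \ref{lem:typeIcrit}, then invoke Proposition \ref{prop:constr} for part (2), and handle $T_{\K}^{\operatorname{indep}} \ne \emptyset$ by taking $\K$ uncountable. The only notable difference is in part (1): the paper simply cites the characteristic-zero quasi-smoothness criterion \cite[Theorem 3.3]{Okada2}, whereas you give a direct argument via Bertini on $\mbP^{\circ}$ plus the nonvanishing of $m w^{m-1}$, combined with the characteristic-zero analogue of Lemma \ref{lem:typeIisolsing}; both routes are valid, and yours has the virtue of being self-contained while the paper's is shorter.
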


\begin{proof}
We first check that Condition \ref{cdgen} is satisfied for $X$ and $Z$.
Note that in this case we have $Z = \mbP$ and $\bar{w} = f$.
It is clear that (1), (3) and (4) are satisfied.
By Lemmas \ref{lem:typeIisolsing} and \ref{lem:typeIcrit}, (3) is satisfied.

Thus, by Proposition \ref{prop:constr}, Condition \ref{cd:T}.(2) is satisfied.
Here the condition $T_{\K}^{\operatorname{indep}} \ne \emptyset$ follows if we choose $\K$ so that it is uncountable. 
Quasi-smoothness of general members of the subfamily $\mcX_{\mbC} \to \mbP^M_{\mbC}$ parametrized by $T_{\mbC}$ follows from quasi-smoothness criterion \cite[Theorem 3.3]{Okada2} in characteristic $0$.
Therefore Condition \ref{cd:T} is satisfied.
\end{proof}

\subsection{Type $\mathrm{II}$ families} \label{sec:typeII}

\begin{table}[h]
\begin{center}
\caption{Orbifold Fano 3-fold hypersurfaces of type $\mathrm{II}$}
\label{table:typeII}
\begin{tabular}{cccc|cccc}
\hline
No. & $X_d \subset \mbP (a_0,\dots,a_4)$ & $p$ & Eq & No. & $X_d \subset \mbP (a_0,\dots,a_4)$ & $p$ & Eq \\
\hline \quad \\[-4mm]
2 & $X_5 \subset \mbP (1,1,1,1,\underline{2})$ & $2$ & $w^2 x$ & 
43 & $X_{20} \subset \mbP (1,2,4,5,\underline{9})$ & $2$ & $w^2 y$ \\[0.2mm]
6 & $X_8 \subset \mbP (\underline{1},1,1,2,4)$ & $7$ & $w^7 x$ & 
44 & $X_{20} \subset \mbP (1,2,5,6,\underline{7})$ & $2$ & $w^2 t$ \\[0.2mm]
7 & $X_8 \subset \mbP (\underline{1},1,2,2,3)$ & $7$ & $w^7 x$ & 
54 & $X_{24} \subset \mbP (\underline{1},1,6,8,9)$ & $23$ & $w^{23} x$ \\[0.2mm]
9 & $X_9 \subset \mbP (1,1,\underline{2},3,3)$ & $2$ & $w^4 x$ & 
55 & $X_{24} \subset \mbP (1,2,3,\underline{7},12)$ & $3$ & $w^3 z$ \\[0.2mm]
12 & $X_{10} \subset \mbP (1,1,2,\underline{3},4)$ & $3$ & $w^3 x$ &
56 & $X_{24} \subset \mbP (1,2,3,8,\underline{11})$ & $2$ & $w^2 y$ \\[0.2mm]
16 & $X_{12} \subset \mbP (1,1,2,4,\underline{5})$ & $2$ & $w^2 z$ &
57 & $X_{24} \subset \mbP (1,3,4,\underline{5},12)$ & $2$ & $w^4 z$ \\[0.2mm]
18 & $X_{12} \subset \mbP (1,2,2,3,\underline{5})$ & $2$ & $w^2 z$ & 
58 & $X_{24} \subset \mbP (1,3,4,7,\underline{10})$ & $2$ & $w^2 z$ \\[0.2mm]
23 & $X_{14} \subset \mbP (1,2,\underline{3},4,5)$ & $3$ & $w^3 t$ & 
60 & $X_{24} \subset \mbP (1,4,5,6,\underline{9})$ & $2$ & $w^2 t$ \\[0.2mm]
29 & $X_{16} \subset \mbP (1,1,2,\underline{5},8)$ & $3$ & $w^3 x$ & 
66 & $X_{27} \subset \mbP (1,\underline{5},6,7,9)$ & $2$ & $w^4 z$ \\[0.2mm]
30 & $X_{16} \subset \mbP (1,1,\underline{3},4,8)$ & $5$ & $w^5 x$ & 
69 & $X_{28} \subset \mbP (1,4,6,7,\underline{11})$ & $2$ & $w^2 z$ \\[0.2mm]
31 & $X_{16} \subset \mbP (1,1,4,\underline{5},6)$ & $3$ & $w^3 x$ & 
72 & $X_{30} \subset \mbP (1,2,\underline{3},10,15)$ & $5$ & $w^5 t$ \\[0.2mm]
32 & $X_{16} \subset \mbP (1,2,\underline{3},4,7)$ & $5$ & $w^5 x$ & 
75 & $X_{30} \subset \mbP (1,\underline{4},5,6,15)$ & $3$ & $w^6 z$ \\[0.2mm]
33 & $X_{17} \subset \mbP (1,\underline{2},3,5,7)$ & $2$ & $w^8 x$ & 
77 & $X_{32} \subset \mbP (1,2,5,\underline{9},16)$ & $3$ & $w^3 z$ \\[0.2mm]
35 & $X_{18} \subset \mbP (\underline{1},1,3,5,9)$ & $17$ & $w^{17} x$ & 
78 & $X_{32} \subset \mbP (1,4,\underline{5},7,16)$ & $5$ & $w^5 z$ \\[0.2mm]
37 & $X_{18} \subset \mbP (1,2,\underline{3},4,9)$ & $3$ & $w^3 t$ & 
83 & $X_{36} \subset \mbP (1,3,4,\underline{11},18)$ & $3$ & $w^3 y$ \\[0.2mm]
38 & $X_{18} \subset \mbP (1,2,3,5,\underline{8})$ & $2$ & $w^2 y$ & 
90 & $X_{42} \subset \mbP (1,\underline{3},4,14,21)$ & $7$ & $w^7 t$ \\[0.2mm]
39 & $X_{18} \subset \mbP (1,3,4,\underline{5},6)$ & $3$ & $w^3 y$ & 
100 & $X_{18} \subset \mbP (1,2,3,\underline{5},9)$ & $3$ & $w^3 z$ \\[0.2mm]
40 & $X_{19} \subset \mbP (1,\underline{3},4,5,7)$ & $3$ & $w^6 x$ & 
108 & $X_{12} \subset \mbP (1,2,3,4,\underline{5})$ & $2$ & $w^2 y$ \\[0.2mm]
42 & $X_{20} \subset \mbP (1,2,\underline{3},5,10)$ & $5$ & $w^5 z$ &  
\end{tabular}
\end{center}
\end{table}

We consider type $\mathrm{II}$ families listed in Table \ref{table:typeII}.
In the 2nd and 6th columns, the weighted degree of the hypersurface and the ambient weighted projective space $\mbP (a_0,\dots,a_4)$ is given.
We choose homogeneous coordinates $x,y,z,t,w$ of $\mbP (a_0,\dots,a_4)$ so that $w$ corresponds to the underlined weight and the others are arranged as $\wt (x) \le \wt (y) \le \wt (z) \le \wt (t)$.

In the following we treat type $\mathrm{II}$ families uniformly.
Let $\mcX \to \mbP^M$ be a type $\mathrm{II}$ family of weighted hypersurfaces of weighted degree $d$ in $\tilde{\mbP} = \mbP (a_0,\dots,a_4)$.
We assume that $a_4$ is the underlined weight and let $x_0,\dots,x_3$ and $w$ be the coordinates of $\tilde{\mbP}$.
Let $\Lambda$ be the set of monomials in variables $x_0,\dots,x_n$ of weighted degree $d$.
We work over an algebraically closed field $\K$ of positive characteristic $p$, where $p$ is the prime number given in the 3rd or 7th column.
Let $\Lambda$ be the set of monomials in variables $x_0,\dots,x_3$ of weighted degree $d$.
We consider weighted hypersurfaces $X$ defined in $\tilde{\mbP}$ by an equation of the form
\[
w^m x_k + f = 0,
\]
where $w^m x_k$ is the monomials given in the 4th or 8th column and $f \in \langle \Lambda \rangle_{\K}$.
These hypersurfaces are parametrized by $T_{\K}$, where $T \cong \mbA^N_{\mbZ}$ with $N = |\Lambda|$ is the parameter space of polynomials in $\langle \Lambda \rangle_{\mbZ}$.
We define 
\[
Z = (\bar{w} x_k + f = 0) \subset \mbP = \mbP (a_0,\dots,a_3,m a_4),
\]
where $\bar{w}$ is the coordinate of weight $m a_4$, and let $\pi \colon X \to Z$ be the morphism defined as $\pi^*\bar{w} = w^m$.
Note that $m$ is divisible by $p$ and $\pi$ is an inseparable cyclic covering (of degree $m$) branched along the divisor $(\bar{w} = 0) \cap Z$.
We define $Z^{\circ} = Z \cap \mbP^{\circ}_{\wt = 1}$ and $X^{\circ} = \pi^{-1} (Z^{\circ})$.
In the following, we assume that $X$ is general.

\begin{Lem} \label{lem:nstrtypeIIqsmZ}
$Z$ is well formed and quasi-smooth.
In particular, $Z^{\circ}$ is nonsingular. 
\end{Lem}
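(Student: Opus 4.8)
Before seeing the author's argument, here is how I would prove Lemma~\ref{lem:nstrtypeIIqsmZ}.

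The plan is to obtain quasi-smoothness of $Z$ from the criteria of Section~\ref{sec:qsm} and then to read off the other two assertions. Note first that, since $w^m x_k$ has degree $d$ in $\tilde{\mbP}$, one has $m a_4 = d - a_k$, so that $Z = Z_d \subset \mbP (a_0,a_1,a_2,a_3,m a_4)$ is a weighted hypersurface of degree $d$ in which the coordinate $\bar w$ appears linearly (i.e.\ only through the term $\bar w x_k$). Hence $Z$ is of exactly the shape treated in Lemma~\ref{lem:qsmtypeIIZ} (applied with $m=1$ and with $\bar w$ in the role of $v$), and, since $n=3$ here, in the refined form Lemma~\ref{lem:criqsmZ} with distinguished variable $x_k$. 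The strategy is therefore to verify that $\Lambda$ satisfies $(\star)_{I,\mbP}$ for every non-empty $I \subset \{0,1,2,3\}\setminus\{k\}$ and then to conclude quasi-smoothness of a general $Z$ by Lemma~\ref{lem:qsmtypeIIZ}.

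For $I$ with $|I|\le 2$ the condition $(\star)_{I,\mbP}$ is read off from Lemma~\ref{lem:checkstar}: because $\tilde{\mbP}$ is well-formed and the underlined weight is the one removed in forming $Z$, one of the weights $a_0,\dots,a_3$ equals $1$, so $\Lambda$ contains the corresponding pure power, and for the remaining singletons and pairs one locates one of the monomial patterns of Lemma~\ref{lem:checkstar}(2),(3). The substantive case is $I=\{0,1,2,3\}\setminus\{k\}$, with $|I|=3$: here one must exhibit three degree-$d$ monomials in $x_0,\dots,x_3$ whose appropriate $3\times 3$ Jacobian minor, restricted to $\Pi^*_{I,\mbP}$, is a non-zero monomial — equivalently, check that one of the alternatives (1)--(5) of Lemma~\ref{lem:criqsmZ} holds. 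I would carry this out family by family, recording the chosen monomials in a table analogous to Table~\ref{table:typeIqsm}, falling back on the more flexible alternatives of Lemma~\ref{lem:criqsmZ} for the families where the naive patterns fail. The one genuine subtlety — and the main obstacle — is that in characteristic $p$ a power $x_i^{ap}$ has vanishing derivatives, so the obvious pure powers $x_i^{d/a_i}$ are frequently unusable and must be replaced by mixed monomials such as $x_i^{l}x_j$; absorbing this is precisely what Lemma~\ref{lem:criqsmZ} is designed for, and the verification, while it demands patience, is entirely routine once tabulated.

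Next, well-formedness. One checks directly that $\mbP = \mbP (a_0,a_1,a_2,a_3,m a_4)$ is well-formed: $\gcd(a_0,a_1,a_2,a_3)=1$ since $\tilde{\mbP}$ is well-formed, and any common divisor $g$ of three of the $a_i$ together with $m a_4$ is coprime to $a_4$ (again by well-formedness of $\tilde{\mbP}$), hence divides $m$, leaving only finitely many cases to inspect; in fact in every Type~$\mathrm{II}$ family one of $a_0,\dots,a_3$ equals $1$, so every four-element subset of the weights of $\mbP$ containing that index has $\gcd$ equal to $1$, and only one subset per family remains to be checked. That $Z$ itself is well-formed, i.e.\ $\codim_Z (Z\cap\Sing\mbP)\ge 2$, then follows from quasi-smoothness together with the fact that $Z$ is not a linear cone (the degree $d$ exceeds all weights of $\mbP$): a two-dimensional coordinate stratum $\Pi_{I,\mbP}$ of $\mbP$, being smooth in its affine cone $\cong\mbA^3$, cannot be contained in the quasi-smooth $3$-fold $Z$, for then the Jacobian of the defining equation would have to be nowhere zero on that cone, which is impossible for a system of homogeneous polynomials of positive degree in three variables.

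Finally, $Z^{\circ}$ is nonsingular. By definition $Z^{\circ}=Z\cap\mbP^{\circ}_{\wt=1}$, and every singular stratum $\Pi_{I,\mbP}$ of $\mbP$ satisfies $\gcd(a_i : i\in I)>1$, hence contains no index of weight $1$; thus $\mbP^{\circ}_{\wt=1}$ is contained in the smooth locus of $\mbP$, and there the $\mbG_m$-action defining $\mbP$ is free. Since $Z$ is quasi-smooth, its affine cone is smooth away from the origin, so $Z$ is smooth at every point that is a smooth point of $\mbP$ at which the action is free; in particular $Z^{\circ}$ is nonsingular.
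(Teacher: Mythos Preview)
Your approach is essentially the paper's: both prove quasi-smoothness of $Z$ by applying Lemma~\ref{lem:criqsmZ} family by family, exhibiting a triple of monomials in $\Lambda$ matching one of the alternatives (1)--(5); the paper tabulates these in Table~\ref{table:typeIIqsm1} and, like your outline, treats No.~18, 23, 44 via alternative~(5) plus a short $(\star)$-check with Lemma~\ref{lem:checkstar}. The paper leaves well-formedness as a direct check, and the deduction of nonsingularity of $Z^{\circ}$ is the one you give.

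One correction to your well-formedness sketch: the assertion that ``a system of homogeneous polynomials of positive degree in three variables'' cannot be everywhere nonvanishing on $\mbA^3\setminus\{0\}$ is false as stated (take $x,y,z$). What makes your argument work is the count: if a $2$-dimensional stratum $\Pi_{I,\mbP}$ with $|I|=3$ were contained in $Z$, then $F|_{\Pi_I}\equiv 0$ forces the three partials $\partial F/\partial x_i$, $i\in I$, to vanish identically on the cone $\mbA^3$, so only the \emph{two} partials for $j\notin I$ remain; two homogeneous forms in three variables always share a nontrivial zero, contradicting quasi-smoothness. With this fix the argument is fine.
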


\begin{proof}
It is straightforward to check that $Z$ is well formed, and we leave it to readers.
We prove quasi-smoothness of $Z$. 
In Table \ref{table:typeIIqsm1}, we list a set of monomials in the 2nd, 5th and 8th columns except for families No.~18, 23, 44, and this shows that $Z$ is quasi-smooth by applying $(j)$ of Lemma \ref{lem:criqsmZ}, where $(j)$ is the one given in the 3rd, 6th or 9th column. 

We consider family No.~18.
We have $x^{12},y x^{10},t x^9 \in \Lambda$ and thus, by Lemma \ref{lem:criqsmZ}.(5), it remains to check $(\star)$ for the strata $\Pi^*_{y,t}$, $\Pi^*_{y}$ and $\Pi^*_{t}$. 
We can check these easily by Lemma \ref{lem:checkstar} since $y^6, t^4,y^3 t^2 \in \Lambda$.

We consider families No.~23, 44 and 90, respectively.
We have $x^{14},y x^{12},zx^{10} \in \Lambda$, $x^{20},y x^{18},z x^{15} \in \Lambda$ and $x^{42}, y x^{38}, z x^{28} \in \Lambda$, respectively, and thus by Lemma \ref{lem:criqsmZ}.(5), it remains to check $(\star)$ for $\Pi^*_{y,z}, \Pi^*_y,\Pi^*_z$.
We can check these easily by Lemma \ref{lem:checkstar} since $y^7,z^3 y \in \Lambda$ for family No.~23, $y^{10},z^4,z^2 y^5 \in \Lambda$ for family No.~44 and $y^7 z, z^3 \in \Lambda$ for family No,~$90$.
This completes the proof.
\end{proof}

\begin{table}[h]
\begin{center}
\caption{Monomials proving quasi-smoothness of $Z$}
\label{table:typeIIqsm1}
\begin{tabular}{ccc|ccc|ccc}
\hline
No. & Monomials & \ref{lem:criqsmZ} & No. & Monomials & \ref{lem:criqsmZ} & No. & Monomials & \ref{lem:criqsmZ} \\
\hline \quad \\[-4mm]
2 & $\{y^5,z^5,t^5\}$ & (1) &
35 & $\{y^6,t^2,z^3 y\}$ & (2) &
60 & $\{y^6,z^4 y,x^{19} z\}$ & (3)  \\[0.4mm]
6 & $\{y^8,z^4,t^2\}$ & (1) &
37 & $\{y^9,z^4 y,x^{14} z\}$ & (3) &
66 & $\{t^3,x^{27},y^3 t\}$ & (2) \\[0.4mm]
7 & $\{z^4,y^4,t^2z\}$ & (2) &
38 & $\{z^6,t^3 z,x^{13} t\}$ & (3) &
69 & $\{t^4,y^7,x^{21} t\}$ & (2) \\[0.4mm]
9 & $\{y^9,z^3,t^3\}$ & (1) &
39 & $\{t^3,z^3 t,x^{14} z\}$ & (3) &
72 & $\{y^{15},z^3,x^{28} y\}$ & (2) \\[0.4mm]
12 & $\{z^5,y^{10},t^2 z\}$ & (2) &
40 & $\{t^2 z,z^3 y,y^3 t\}$ & (4) &
75 & $\{y^6,t^2,x^{25} y\}$ & (2) \\[0.4mm]
16 & $\{x^{12},t^3,y^{11} x\}$ & (2) &
42 & $\{y^{10},t^2,x^{18} y\}$ & (2) &
77 & $\{x^{32},y^{16},t^2\}$ & (1) \\[0.4mm]
18 & & (5) &
43 & $\{t^4,z^5,x^{15} t\}$ & (2) &
78 & $\{x^{32},y^8,t^2\}$ & (1) \\[0.4mm]
23 & & (5) &
44 & & (5) &
83 & $\{z^9,t^2,x^{32} z\}$ & (2) \\[0.4mm]
29 & $\{y^{16},z^8,t^2\}$ & (1) &
54 & $\{y^4,z^3,t^2 y\}$ & (2) &
90 & & (5) \\[0.4mm]
30 & $\{y^{16},z^4,t^2\}$ & (1) &
55 & $\{y^{12},t^2,x^{22} y\}$ & (2) &
100 & $\{y^9,t^2,x^{16} y\}$ & (2) \\[0.4mm]
31 & $\{z^4,y^{16},t^2 z\}$ & (2) &
56 & $\{z^8,t^3,x^{21} z\}$ & (2) &
108 & $\{z^4,t^3,x^9 z\}$ & (2) \\[0.4mm]
32 & $\{y^8,z^4,t^2 y\}$ & (2) &
57 & $\{t^2,y^4 t,x^{21} y\}$ & (3) &
& & \\[0.4mm]
33 & $\{t^2 y,z^2 t,y^4 z\}$ & (4) & 
58 & $\{y^6,t^3 y,x^{17} t\}$ & (3) &
& &
\end{tabular}
\end{center}
\end{table}

\begin{Lem}
$X$ has only isolated cyclic quotient singularities along $X \setminus X^{\circ}$.
\end{Lem}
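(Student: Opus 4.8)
The plan is to follow the template of Lemma~\ref{lem:typeIisolsing}. As there, it suffices to show that a general $X$ is quasi-smooth along $X \setminus X^{\circ}$; quasi-smoothness then implies, as in the type~$\mathrm{I}$ case, that $X$ has only isolated cyclic quotient singularities along $X \setminus X^{\circ}$. So the task is to feed the locus $X \setminus X^{\circ}$ into the quasi-smoothness criteria of Section~\ref{sec:qsm} for hypersurfaces of the shape $w^m x_k + f = 0$.

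First I would locate $X \setminus X^{\circ}$ among the coordinate strata of $\tilde{\mbP}$. Since $\pi^* x_i = x_i$ and $\pi^* \bar w = w^m$, and since $m a_4 \ge 2$ so that $\bar w$ is not a weight-$1$ coordinate of $\mbP$, we have $Z^{\circ} = Z \cap \bigcup_{i \in I_0}(x_i \ne 0)$ with $I_0 := \{0,\dots,3\}_{\wt = 1}$. Hence, writing $I := \{0,\dots,3\}_{\wt > 1}$ and letting $4$ denote the index of $w$, we get $X \setminus X^{\circ} = X \cap \bigcap_{i \in I_0}(x_i = 0) = X \cap \Pi_{I \cup \{4\},\tilde{\mbP}}$. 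Decomposing the closed stratum $\Pi_{I \cup \{4\},\tilde{\mbP}}$ as the union of $\Pi^*_{I',\tilde{\mbP}}$ and $\Pi^*_{I' \cup \{4\},\tilde{\mbP}}$ over all $I' \subset I$ reduces the problem to the quasi-smoothness of a general $X$ along each of these strata.

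Next I would apply the criterion, running the argument in the proof of Lemma~\ref{lem:qsmtypeIIZinsep}. For each non-empty $I' \subset I$, Lemma~\ref{lem:qsmstar} shows that if $\Lambda$ satisfies $(\star)_{I',\tilde{\mbP}}$ then $\{w^m x_k\} \cup \Lambda$ satisfies $(\dagger)_{I',\tilde{\mbP}}$ and $(\dagger)_{I' \cup \{4\},\tilde{\mbP}}$; hence by Lemma~\ref{lem:qsmcri2} a general member of $\mcL(\{w^m x_k\} \cup \Lambda)$ — equivalently, a general $X$ — is quasi-smooth along $\Pi^*_{I',\tilde{\mbP}}$ and along $\Pi^*_{I' \cup \{4\},\tilde{\mbP}}$. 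The only stratum not reached this way is the single point $\Pi^*_{\{4\},\tilde{\mbP}} = (w \ne 0) \cap \bigcap_i (x_i = 0)$, where one checks by hand that $\partial(w^m x_k + f)/\partial x_k = w^m + \partial f/\partial x_k$ takes the value $w^m \ne 0$ at the corresponding cone point, since $\partial f/\partial x_k$ vanishes there ($f$ has no monomial equal to $x_k$, as $\deg x_k < d$); so $X$ is quasi-smooth there too. Altogether this reduces the statement to checking that $\Lambda$ satisfies $(\star)_{I',\tilde{\mbP}}$ for every non-empty $I' \subset \{0,\dots,3\}_{\wt > 1}$.

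It remains to verify this last condition for each of the $37$ families in Table~\ref{table:typeII}, and this is the step I expect to be the main obstacle. When $|I'| \le 2$ it follows directly from the checklist Lemma~\ref{lem:checkstar}, once one points to the appropriate monomials of $\Lambda$ (pure powers $x_i^l$, or monomials $x_i^l x_j$); this already disposes of every family with $|\{0,\dots,3\}_{\wt > 1}| \le 2$. For the families in which three of the coordinates $x_0,\dots,x_3$ have weight $> 1$, one additionally verifies $(\star)_{I',\tilde{\mbP}}$ for $I' = \{0,\dots,3\}_{\wt > 1}$ by exhibiting three monomials $\Xi \subset \Lambda$ and a $3$-element set $J \subset \{0,\dots,3\} \setminus \{k\}$ such that $\det(M_{\Xi,J})|_{\Pi^*_{I',\tilde{\mbP}}}$ or $\det(M'_{\Xi,J})|_{\Pi^*_{I',\tilde{\mbP}}}$ is a non-zero monomial; these choices can be collected in a table, exactly as in Tables~\ref{table:typeIqsm} and~\ref{table:typeIIqsm1}. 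The work is finite but genuinely case-by-case: for families whose degree $d$ is tight relative to the weights one must check that the monomials demanded by Lemma~\ref{lem:checkstar} actually occur in degree $d$, and when the marked variable $x_k$ itself has weight $> 1$ the data $\Xi, J$ must be arranged so as never to differentiate in $x_k$.
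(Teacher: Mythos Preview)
Your reduction to verifying $(\star)^k_{I',\tilde{\mbP}}$ for all non-empty $I' \subset \{0,\dots,3\}_{\wt>1}$, together with the direct check at the $w$-point $\Pi^*_{\{4\},\tilde{\mbP}}$, is correct and leads to a valid proof. However, the paper takes a shortcut you omit: it first observes that $\NQsm(X)\cap(x_k=0)=\pi^{-1}(\NQsm(Z))=\emptyset$ by the already-established quasi-smoothness of $Z$ (Lemma~\ref{lem:nstrtypeIIqsmZ}). This single step handles every stratum contained in $(x_k=0)$, and in particular finishes the argument outright for every family with $\wt(x_k)=1$ (i.e.\ those with equation $w^m x$), since then $X\setminus X^{\circ}\subset(x_k=0)$. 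Only for the families with $\wt(x_k)\ge 2$ does the paper go on to check $(\star)^k_{I',\tilde{\mbP}}$, and Table~\ref{table:typeIIqsm2} covers precisely those.

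Your uniform route therefore incurs extra case-work: for families with $\wt(x_k)=1$ you must still verify $(\star)^k_{I',\tilde{\mbP}}$ for all $I'$, including the $|I'|=3$ cases (e.g.\ Nos.~7, 32, 33, 40), none of which appear in the paper's tables. These checks do go through (often with the same monomials as in Table~\ref{table:typeIIqsm1}), but your remark that the $|I'|\le 2$ cases follow ``directly from Lemma~\ref{lem:checkstar}'' is a bit optimistic: for instance in No.~33 ($p=2$, weights $1,3,5,7$) there is no pure power $y^l$ or $z^l$ in degree $17$ and only one monomial $y^4 z$ of the form $y^a z^b$, so none of the clauses of Lemma~\ref{lem:checkstar}(2) apply to $\{y,z\}$; one must instead compute $\det(M'_{\{y^4 z,\,z^2 t\},\{t\}})|_{\Pi^*_{y,z}}=y^4 z^3$ directly. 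So your approach is sound but trades the clean reduction-to-$Z$ step for a longer monomial hunt.
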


\begin{proof}
We first claim that $X$ is quasi-smooth along $(x_k = 0) \subset \tilde{\mbP}$.
Let $\NQsm (X)$ and $\NQsm (Z)$ be the non-quasi-smooth loci of $X$ and $Z$, respectively.
We have $\prt (w^m x_k + f)/\prt w = 0$ since $p \mid m$, hence
\[
\NQsm (X) = \bigcap_{i=0}^3 \left( \frac{\prt (w^m x_k + f)}{\prt x_i} = 0 \right) \cap (w^m x_k + f = 0) \subset \tilde{\mbP}.
\]
By Lemma \ref{lem:nstrtypeIIqsmZ}, $Z$ is quasi-smooth, which implies
\[
\emptyset = \NQsm (Z) = \bigcap_{i=0}^3 \left(\frac{\prt (\bar{w} x_k + f)}{\prt x_i} = 0 \right) \cap (x_k = 0) \cap (\bar{w} x_k + f = 0) \subset \mbP.
\] 
We have
\[
\pi^{-1} (\NQsm (Z)) = \NQsm (X) \cap (x_k = 0),
\] 
and thus $\NQsm (X) \cap (x_k = 0) = \emptyset$, that is, $X$ is quasi-smooth along $(x_k = 0)$.

Let $X$ be a member of family for which $x_k$ is of weight $1$ (this corresponds to a family such that $w^m x$ is given in Table \ref{table:typeII}). 
In this case $X \setminus X^{\circ}$ is contained in $(x_k = 0)$ and thus $X$ is quasi-smooth along $X \setminus X^{\circ}$. 

We assume that the weight of $x_k$ is at least $2$ and we set $I = \{0,1,2,3\}_{\wt > 1}$.
By Lemma \ref{lem:qsmtypeIIZinsep}, it is enough to show that $(\star)^k_{I',\mbP}$ is satisfied for any non-empty subset $I' \subset I$.
By Lemma \ref{lem:checkstar}, it is straightforward to check $(\star)^k_{I',\mbP}$ for any subset $I' \subset I$ with $|I| \le 2$ and we leave it to readers.
In particular the proof is completed if $|I| \le 2$.
  
In Table \ref{table:typeIIqsm2}, we list families (together with a set of monomials) such that the weight of $x_k$ is at least $2$ and $|I| \ge 3$. 
For any such family, we have $|I| = 3$ and thus it remains to check $(\star)^k_{I,\mbP}$.
Let $\Xi$ be the set of $3$ monomials given in the 2nd, 4th or 6th column of the table and $J$ the set of $2$ coordinates given as the subscript of $\Xi$.
Then we see that $\det ( M'_{\Xi,J})|_{\Pi^*_{I,\mbP}}$ is a non-zero monomial and thus $\Lambda$ satisfies $(\star)_{I,\mbP}$, which completes the proof.
\end{proof}

\begin{table}[h]
\begin{center}
\caption{Monomials proving quasi-smoothness along $X \setminus X^{\circ}$}
\label{table:typeIIqsm2}
\begin{tabular}{cc|cc|cc}
\hline
No. & Monomials & No. & Monomials & No. & Monomials \\
\hline \quad \\[-4mm]
18 & $\{x t y^4, z^5 y, t^4\}_{x,y}$ & 
55 & $\{x z y^{10},t^2,y^{12}\}_{x,t}$ & 
75 & $\{x y z^4,t^2,y^6\}_{x,t}$ \\[0.4mm]
23 & $\{x t z^2,y^7,t^2 z\}_{x,y}$ & 
56 & $\{x z y^{10},t^3,z^8\}_{x,t}$ & 
77 & $\{x z y^{13},y^{16},t^2\}_{x,y}$  \\[0.4mm]
37 & $\{x t y^4,y^9,z^4 y\}_{x,y}$ & 
57 & $\{x y z^5,t y^4,y^8\}_{x,t}$ & 
78 & $\{x z y^6,y^8,t^2\}_{x,y}$ \\[0.4mm]
38 & $\{x z y^7,z^6,t^3 z\}_{x,z}$ & 
58 & $\{x t z^4, y^8, t^3 y\}_{x,y}$ & 
83 & $\{x y z^8,t^2,z^9\}_{x,t}$ \\[0.4mm]
39 & $\{x y^3 z^2,z^3 t,t^3\}_{x,t}$ & 
60 & $\{x z^3 y^2,z^4 y,y^6\}_{x,y}$ & 
90 & $\{x t y^5,z^3,z y^7\}_{x,z}$ \\[0.4mm]
42 & $\{x z y^7,y^{10},t^2\}_{x,t}$ & 
66 & $\{x z^2 y^2,t^3,t y^3\}_{x,y}$ & 
100 & $\{x t y^4,t^2,y^9\}_{x,t}$ \\[0.4mm]
43 & $\{x t y^7,z^5,t^4\}_{x,z}$ & 
69 & $\{x t y^5,y^7,t^4\}_{x,y}$ &
108 & $\{x z y^4,t^3,z^4\}_{x,t}$ \\[0.4mm]
44 & $\{x z y^7, z^2 y^5, z^4\}_{x,y}$ & 
72 & $\{x t y^7,z^3,y^{15}\}_{x,z}$
\end{tabular}
\end{center}
\end{table}

\begin{Prop} \label{prop:typeIIcheckT}
Any type $\mathrm{II}$ family $\mcX \to \mbP^M$ together with $T$ satisfies \emph{Condition \ref{cd:T}}.
\end{Prop}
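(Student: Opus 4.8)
The plan is to follow the template of the proof of Proposition~\ref{prop:typeIcheckT}. For a general type~$\mathrm{II}$ member $X = (w^m x_k + f = 0)$, with $f \in \langle\Lambda\rangle_{\K}$ general, and its inseparable cyclic cover $\pi\colon X\to Z$, $Z = (\bar w x_k + f = 0)\subset\mbP(a_0,\dots,a_3,ma_4)$, I would first check that the pair $(X,Z)$ satisfies Condition~\ref{cdgen}. Once this is done, Proposition~\ref{prop:constr} produces a resolution $\varphi\colon Y\to X$ whose exceptional divisor is SNC with smooth rational components and for which $H^0(Y,\Omega_Y^{2})\ne 0$; this is exactly items~(a) and~(b) of Condition~\ref{cd:T}.(2). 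The remaining ingredients of Condition~\ref{cd:T}.(2) are formal: $X$ has only isolated singular points, since its singularities away from $X^{\circ}$ are isolated cyclic quotient singularities by the Lemma preceding this Proposition, while its singularities on $X^{\circ}=\pi^{-1}(Z^{\circ})$ lie over the critical points of $\bar w$ on $Z^{\circ}$, which are admissible and hence isolated; and $T_{\K}^{\operatorname{indep}}\ne\emptyset$ once $\K$ is taken uncountable. Finally Condition~\ref{cd:T}.(1) is handled exactly as in Proposition~\ref{prop:typeIcheckT}: quasi-smoothness of a general member of the subfamily over $T_{\mbC}$ is furnished by the characteristic-zero quasi-smoothness criterion \cite[Theorem~3.3]{Okada2}, and quasi-smoothness then forces only isolated cyclic quotient singularities.

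It remains to verify Condition~\ref{cdgen} for $(X,Z)$, and this reduces to results already in hand. For part~(1), $Z$ is well formed and quasi-smooth by Lemma~\ref{lem:nstrtypeIIqsmZ}, hence normal, the affine cone of a quasi-smooth weighted hypersurface of dimension $\ge 2$ being a normal hypersurface singularity. Part~(3) is trivial, as $n=3$. For part~(4), note that $d - a_{\operatorname{sum}} = a_4 - \iota$, where $a_4$ is the underlined weight and $\iota\ge 1$ is the Fano index, so $d-a_{\operatorname{sum}}\ge 0$ is a direct inspection of Table~\ref{table:typeII}; and $H^0(Z,\mcO_Z(d-a_{\operatorname{sum}}))\ne 0$ because $a_0=1$, so that $x_0^{d-a_{\operatorname{sum}}}$ restricts to a nonzero section. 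Part~(2) is the substantive one: I take $Z^{\circ}=Z\cap\mbP^{\circ}_{\wt = 1}$, which is smooth by Lemma~\ref{lem:nstrtypeIIqsmZ}; the Lemma preceding this Proposition gives that $X$ has only isolated cyclic quotient singularities along $X\setminus X^{\circ}=X\setminus\pi^{-1}(Z^{\circ})$; and the assertion that $\bar w$ has only admissible critical points on $Z^{\circ}$ is precisely the final Lemma of Section~\ref{sec:critical}, applied with $v=\bar w$, $c=ma_4$ (divisible by $p$ since $p\mid m$) and $d=c+a_k$, provided the weights $a_0,\dots,a_3$ together with $d$ satisfy Condition~\ref{cdcrit}.

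Thus the only genuinely family-dependent work is to confirm Condition~\ref{cdcrit} for each of the $37$ families in Table~\ref{table:typeII}. Part~(1) is automatic since $a_0=1$; part~(2), the inequalities $d\ge 2a_i$ for $0\le i\le 3$, is read off the table (recall $d=ma_4+a_k$ with $m\ge p\ge 2$); and part~(3), needed only for the families with $p=2$ because $n=3$ is odd, asks for two distinct weights among $a_0,\dots,a_3$ whose triples do not exceed $d$, and this too is immediate from the table. I expect this bookkeeping, rather than any conceptual difficulty, to be where the work concentrates; everything else is a quotation of Proposition~\ref{prop:constr}, Lemma~\ref{lem:nstrtypeIIqsmZ}, the isolated-singularity Lemma and the final Lemma of Section~\ref{sec:critical}, together with the characteristic-zero criterion \cite[Theorem~3.3]{Okada2}.
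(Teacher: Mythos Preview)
Your proof is correct and follows essentially the same approach as the paper: verify Condition~\ref{cdgen} using Lemma~\ref{lem:nstrtypeIIqsmZ}, the subsequent isolated-singularity lemma, and the final lemma of Section~\ref{sec:critical}, then invoke Proposition~\ref{prop:constr} for Condition~\ref{cd:T}.(2) and \cite[Theorem~3.3]{Okada2} for Condition~\ref{cd:T}.(1). In fact your write-up is more explicit than the paper's, which is terse and even contains a copy-paste error in its lemma references (it cites the No.~103 lemmas rather than the type~II ones you correctly identify).
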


\begin{proof}
We can verify Condition \ref{cd:T}.(1) by the quasi-smoothness criterion \cite[Theorem 3.3]{Okada2} in characteristic $0$.
We see that Condition \ref{cdgen} is satisfied by Lemmas \ref{lem:No103isolcyc} and \ref{lem:No103admcrit}, hence Condition \ref{cd:T}.(2) follows from Proposition \ref{prop:constr}.
\end{proof}

\subsection{Family No.~103} \label{sec:No103}

Let $\mcX \to \mbP^M$ be the family No.~103 consisting of the weighted hypersurfaces in $\mbP (2,3,5,11,19)$ of weighted degree $38$.
We set $\tilde{\mbP} = \mbP (2,3,5,19,11)$ and denote by $x,y,z,t,w$ the homogeneous coordinates of weight $2,3,5,11,19$, respectively.
We work over an algebraically closed field $\K$ of characteristic $2$.
Let $\Lambda$ be the set of monomials in variables $x,y,z,t$ of weighted degree $38$.
We consider weighted hypersurfaces $X \subset \tilde{\mbP}$ defined by an equation of the form
\[
w^2 + f (x,y,z,t) = 0.
\]
These $X$ are parametrized by $T_{\K}$, where $T \cong \mbA_{\mbZ}^N$ with $N = |\Lambda|$ parametrizes the polynomials in $\langle \Lambda \rangle_{\mbZ}$.
In the following we assume that $f$ is general.
We set $\mbP = \mbP (2,3,5,11)$ and $U = (x \ne 0) \cap (y \ne 0) \subset \mbP$.
Note that $U$ is smooth.
Let $\pi \colon X \to \mbP$ be the projection which is a double cover branched along the divisor $(f = 0) \subset \mbP$.

\begin{Lem} \label{lem:No103isolcyc}
$X$ is quasismooth along $X \setminus \pi^{-1} (U)$.
In particular $X$ has only isolated cyclic quotient singularities along $X \setminus \pi^{-1} (U)$.
\end{Lem}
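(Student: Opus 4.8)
The plan is to imitate the proof of Lemma~\ref{lem:typeIisolsing} for the type~$\mathrm{I}$ families, with the open set $U$ playing the role that $\mbP^{\circ}_{\wt=1}$ plays there (the latter being empty here, since no weight equals~$1$). Write $x_0,\dots,x_3=x,y,z,t$ for the coordinates of $\mbP$, set $x_4=w$, and let $\Lambda$ denote the set of degree-$38$ monomials in $x,y,z,t$, so that $X=(w^2+f=0)$ with $f\in\langle\Lambda\rangle_{\K}$ general. Since $\pi$ is the forgetful projection, $\pi^{-1}(U)=X\cap(x\ne0)\cap(y\ne0)$, and therefore
\[
X\setminus\pi^{-1}(U)=X\cap\left((x=0)\cup(y=0)\right)=\left(X\cap\Pi_{\{1,2,3,4\},\tilde{\mbP}}\right)\cup\left(X\cap\Pi_{\{0,2,3,4\},\tilde{\mbP}}\right).
\]
So it is enough to prove that a general $X$ is quasi-smooth along $\Pi_{\{1,2,3,4\},\tilde{\mbP}}$ and along $\Pi_{\{0,2,3,4\},\tilde{\mbP}}$.

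For this I would apply Lemma~\ref{lem:qsmcritypeI} twice, once with $I=\{1,2,3\}$ and once with $I=\{0,2,3\}$ (the hypotheses hold since the covering degree $2$ is divisible by $p=2$). This reduces the whole statement to verifying that $\Lambda$ satisfies $(*)_{I',\mbP}$ for every non-empty $I'\subseteq\{1,2,3\}$ and every non-empty $I'\subseteq\{0,2,3\}$: concretely, for the eleven subsets given by the four singletons $\{0\},\{1\},\{2\},\{3\}$, the five pairs $\{0,2\},\{0,3\},\{1,2\},\{1,3\},\{2,3\}$, and the two triples $\{0,2,3\},\{1,2,3\}$. The only coordinate stratum of $(x=0)\cup(y=0)$ not covered by these two applications of Lemma~\ref{lem:qsmcritypeI} is $\Pi^*_{\{4\},\tilde{\mbP}}=\{(0\!:\!0\!:\!0\!:\!0\!:\!1)\}$, but $w^2+f(0,0,0,0)=w^2\ne0$ shows this point does not lie on $X$, so it may be discarded.

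It then remains to check the eleven instances of $(*)_{I',\mbP}$ by exhibiting explicit degree-$38$ monomials, exactly in the style of Tables~\ref{table:typeIqsm} and~\ref{table:typeIIqsm2}. For the singletons this is Lemma~\ref{lem:qsmast}(1): $x^{19}\in\Lambda$ with $2\nmid 19$ gives $(*)_{\{0\},\mbP}$, and suitable monomials of the shape $x_i^{l}x_j$ (for instance $y^{11}z$, and analogous monomials pairing $z$ and $t$ with another coordinate) give the other three. For the pairs and triples one cannot always quote Lemma~\ref{lem:qsmast} verbatim --- for example no degree-$38$ monomial has the form $x^{l}x_j$, so $(*)_{\{0,2\},\mbP}$ is not of the listed shapes --- so instead one selects $\Xi\subseteq\Lambda$ and a coordinate set $J\subseteq\{0,1,2,3\}$ with $|\Xi|=|J|=|I'|$ and checks directly that $\det(M_{\Xi,J})|_{\Pi^*_{I',\mbP}}$ is a non-zero monomial; in practice $\Lambda$ contains enough monomials with exactly one $y$, one $z$, or one $t$ (such as $x^{15}zy$ and similar) to arrange that the relevant minor is triangular with non-vanishing diagonal entries in characteristic~$2$. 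Finally, quasi-smoothness along $X\setminus\pi^{-1}(U)$ gives the ``in particular'' statement immediately: $\Sing\tilde{\mbP}$ is the finite set of coordinate vertices, so along $X\setminus\pi^{-1}(U)$ a quasi-smooth $X$ meets $\Sing\tilde{\mbP}$ only in finitely many points, at which it has cyclic quotient singularities.

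The step I expect to be the main obstacle is this last one --- the pair and triple verifications. In characteristic~$2$ many partial derivatives of monomials vanish, and the highest-weight coordinate $t$ occurs in comparatively few degree-$38$ monomials, so one must choose $\Xi$ and $J$ with some care to ensure the minor is a genuine non-zero monomial rather than a form with an even integer coefficient. Everything else is routine bookkeeping of the same kind already carried out for the type~$\mathrm{I}$ and type~$\mathrm{II}$ families.
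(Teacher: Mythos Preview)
Your approach is essentially identical to the paper's: apply Lemma~\ref{lem:qsmcritypeI} with $I=\{1,2,3\}$ and $I=\{0,2,3\}$, then verify $(*)_{I',\mbP}$ for all non-empty subsets (the paper dispatches the two triples via $\Xi=\{t^3z,\,z^7y,\,y^{12}x\}$ and $\Xi=\{t^3z,\,z^7y,\,x^{19}\}$, both with $J=\{x,y,z\}$). One small correction: your chosen example $(*)_{\{0,2\},\mbP}$ \emph{is} covered by Lemma~\ref{lem:qsmast}(2)(b) via $z^7y,\,x^{19}\in\Lambda$ (take $i_1=2$, $i_2=0$); the pair that genuinely falls outside the listed shapes is $\{0,3\}=\{x,t\}$, for which one may take $\Xi=\{x^{19},\,tx^8y\}$ and $J=\{x,y\}$.
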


\begin{proof}
We have
\[
X \setminus \pi^{-1} (U) = X \cap \left(\Pi_{I_0,\tilde{\mbP}} \cup \Pi_{I_1,\tilde{\mbP}} \right),
\]
where $I_0 = \{1,2,3\}$ and $I_1 = \{0,2,3\}$.
By Lemma \ref{lem:qsmcritypeI}, it is enough to show that $\Lambda$ satisfies $(*)_{I,\mbP}$ for any non-empty subset $I$ of either $I_0$ or $I_1$.
By Lemma \ref{lem:qsmast}, it is straight forward to check $(*)_{I,\mbP}$ for $I$ with $|I| \le 2$.
For the strata $\Pi^*_{I_0,\mbP} = \Pi^*_{y,z,t}$ and $\Pi^*_{I_1,\mbP} = \Pi^*_{x,z,t}$, we have
\[
\left| \frac{\prt \{t^3 z, z^7 y, y^{12} x\}}{\prt \{x, y, z\}} \right|_{\Pi^*_{y, z, t}} = t^3 z^7 y^{12}, \quad
\left| \frac{\prt \{t^3 z, z^7 y, x^{19}\}}{\prt \{x, y, z\}} \right|_{\Pi^*_{x, z, t}} = t^3 z^7 x^{18},
\]
which shows that $X$ is quasi-smooth along $X \setminus \pi^{-1} (U)$.
\end{proof}

\begin{Lem} \label{lem:No103admcrit}
The section $f \in H^0 (\mbP,\mcO_{\mbP} (38))$ has only admissible critical points on $U$.
\end{Lem}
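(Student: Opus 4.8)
The plan is to run the same incidence-variety dimension count that proved the preceding lemma, now for sections of $\mcO_{\mbP}(38)$ on $\mbP=\mbP(2,3,5,19)$ and the open set $U=(x\ne 0)\cap(y\ne 0)$. First I would record that $U$ lies in the free (hence smooth) locus of $\mbP$: a point with $x\ne 0$ and $y\ne 0$ has trivial $\mbG_m$-stabilizer because $\gcd(2,3)=1$. Let $\mcF$ be the affine space of homogeneous polynomials in $x,y,z,t$ of weighted degree $38$ and set
\[
\mcW^{\crit}=\{(f,\msq)\mid f \text{ has a critical point at } \msq\in U\}\subset \mcF\times U,\qquad
\mcW=\{(f,\msq)\in\mcW^{\crit}\mid \text{the critical point is non-admissible}\}.
\]
As in the previous proof, it suffices to show that for every $\msq\in U$ the fibre $\mcW_{\msq}$ of $\mcW\to U$ has codimension at least $4$ in $\mcF$, because then $\dim\mcW\le\dim U+(\dim\mcF-4)<\dim\mcF$, so the image of $\mcW$ in $\mcF$ is not dense and a general $f\in\mcF$ has only admissible critical points on $U$.

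Next I would fix $\msq\in U$ and normalize. Using the $\mbG_m$-action to arrange $x(\msq)=1$ and replacing $z$ and $t$ by $z-c_z xy$ and $t-c_t\,\tau$, where $\tau$ is a suitable weight-$19$ monomial in $x,y$ (neither operation changes $\mcF$), I may assume $\msq=(1\!:\!\mu\!:\!0\!:\!0)$ with $\mu\ne 0$, and then work in the affine chart $(x\ne 0)$; its $\mu_2$-quotient singularity disappears along the locus $y\ne 0$, so $\msq$ admits honest local coordinates $u_1,u_2,u_3$. The first genuine step is to check that the $2$-jet map
\[
\mcF=H^0(\mbP,\mcO_{\mbP}(38))\longrightarrow \mcO_{\mbP}(38)\otimes(\mcO_{\mbP,\msq}/\mfm_{\msq}^{3})
\]
is surjective. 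This is the role played by Condition \ref{cdcrit} in Lemma \ref{lem:crittypeI}, but here $\mbP$ has no weight-$1$ coordinate, so that lemma does not apply; instead I would exhibit explicitly enough weight-$38$ monomials $x^{a}y^{b}z^{c}t^{d}$ — available because $\gcd(2,3)=1$, so $38-5c-19d$ is of the form $2a+3b$ with $a,b\ge 0$ whenever it is $\ge 0$ and $\ne 1$ — and check directly that, after dehomogenizing by a power of $x$ and using $y(\msq)\ne 0$, their images span $\mcO_{\mbP,\msq}/\mfm_{\msq}^{3}$. Granting this, ``$f$ has a critical point at $\msq$'' is the vanishing of the linear part of $f$ at $\msq$, i.e.\ exactly $3$ independent linear conditions, so $\mcW^{\crit}_{\msq}$ is an affine-linear, in particular irreducible, subspace of codimension $3$ in $\mcF$.

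Finally, to promote this to codimension $4$ for $\mcW_{\msq}$, I would produce one section $f\in\mcW^{\crit}_{\msq}$ with an \emph{admissible} critical point at $\msq$. Since $n=\dim\mbP=3$ is odd, $p=2$ and the covering degree is $m=2$, admissibility at $\msq$ is by definition the single condition $\length\,\mcO_{\mbP,\msq}/(\prt f/\prt u_1,\prt f/\prt u_2,\prt f/\prt u_3)=2$; by the Remark following the definition of admissible critical point this holds whenever the local expansion of $f$ at $\msq$ has the shape $\alpha+u_2u_3+u_1^{3}+(\text{higher order terms})$, and such an $f$ exists by the surjectivity just established. Hence $\mcW_{\msq}$ is a proper subset of the irreducible $\mcW^{\crit}_{\msq}$; since the colength of the Jacobian ideal is upper semicontinuous along $\mcW^{\crit}_{\msq}$ and equals $2$ on a dense open subset, $\mcW_{\msq}$ is contained in the proper closed locus where this colength is $\ge 3$, so $\codim_{\mcF}\mcW_{\msq}\ge\codim_{\mcF}\mcW^{\crit}_{\msq}+1=4$, as required.

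I expect the main obstacle to be the surjectivity of the $2$-jet map at points of $U$: because $\mbP(2,3,5,19)$ carries no weight-$1$ coordinate, Lemma \ref{lem:crittypeI} cannot be invoked, and one must verify by hand — using $\gcd(2,3)=1$ and the invertibility of $x$ and $y$ on $U$ — that the weight-$38$ monomials already separate $2$-jets there. A secondary technical point is keeping track of the $\mu_2$-quotient structure of the chart $(x\ne 0)$ when writing down the local coordinates $u_1,u_2,u_3$ and the local expansion of $f$, so that the explicit section in the last step genuinely has the required normal form.
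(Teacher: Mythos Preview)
Your plan follows the paper's own argument almost exactly: set up the incidence loci $\mcW^{\crit}\supset\mcW$ over $U$, show each fibre $\mcW^{\crit}_{\msq}$ is an affine subspace of codimension~$3$ by a jet--surjectivity statement, and then exhibit one section with an \emph{admissible} critical point at $\msq$ so that $\mcW_{\msq}\subsetneq\mcW^{\crit}_{\msq}$ is closed of codimension $\ge 4$.

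There is one genuine slip. You claim that the existence of an $f$ with local shape $\alpha+u_2u_3+u_1^{3}+(\text{h.o.t.})$ follows ``by the surjectivity just established,'' but the map you made surjective lands in $\mcO_{\mbP,\msq}/\mfm_{\msq}^{3}$ and therefore only controls the constant, linear and quadratic parts of $f$. The cubic coefficient of $u_1^{3}$---which is exactly what forces $\length\,\mcO_{\msq}/(\partial_i f)=2$ in characteristic~$2$---is not governed by that $2$--jet map. So from your surjectivity you can arrange the $2$--jet $\alpha+u_2u_3$, but it could a priori happen that every such $f$ has vanishing $u_1^{3}$--coefficient, and then no admissible $f$ would exist. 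You must either check surjectivity one order higher (onto the $u_1^{3}$ direction in $\mfm^{3}/\mfm^{4}$), or---as the paper does---write down an explicit degree--$38$ polynomial in $x,y,z,t$ whose expansion at $\msq$ has the required form. The latter is immediate once you have identified local coordinates on $U$: a monomial $zt\,x^{a}y^{b}$ of total weight $38$ supplies the $u_2u_3$ term, and a suitable combination of monomials in $x,y$ alone produces a pure $(u^*)^{3}$ term because in characteristic~$2$ the expansion of such a combination in $u^*=u-\lambda^{2}$ can be arranged to kill the linear and quadratic parts while keeping the cubic one.

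A smaller point: for the codimension--$3$ statement you only need surjectivity of the $1$--jet map $H^{0}(\mbP,\mcO_{\mbP}(38))\to\mcO_{\mbP}(38)\otimes\mcO_{\mbP}/\mfm_{\msq}^{2}$, since ``critical point'' constrains only the three first partials. This is what the paper checks (four explicit monomials hitting $1,u^{*},z,t$); your proposed $2$--jet surjectivity is more than necessary here and, as just noted, still not enough for the admissibility step.
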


\begin{proof}
We show that, for any point $\msp \in U$, the map 
\[
\rho \colon H^0 (\mbP, \mcO_{\mbP} (38)) \to \mcO_{\mbP} (38) \otimes \mcO_{\mbP}/ \mfm_{\msp}^2
\] 
is surjective and there exists $g \in H^0 (\mbP, \mcO_{\mbP} (38))$ which has an admissible critical point at $\msp$, which will complete the proof by the dimension counting argument.

We have an identification $U \cong (\mbA^1_u \setminus \{0\}) \times \mbA^2_{z,t}$.
where $z|_U$ and $t|_U$ are simply denoted by $z,t$, and $x|_U = y|_U = u$.
Replacing coordinates we may assume $\msp = (\lambda\!:\!1\!:\!0\!:\!0) \in U$ for some $\lambda \ne 0$ and, by the above identification, $\msp = (\lambda^2,0,0)$.
We set $u^* = u - \lambda^2$ so that $\mfm_{\msp} = (u^*,z,t)$. 
We have
\[
\rho (x^{10} y^6) = \lambda^{32}, \ 
\rho (x^{19}) = \lambda^{38} + \lambda^{38} u^*, \ 
\rho(z x^3 y^{9}) = \lambda^{24} z, \ 
\rho (t x^9 y^3) = \lambda^{24} t,
\] 
which implies that $\rho$ is surjective.
Moreover we have
\[
(z s x^8 y^2 + x^{19} + \lambda^2 x^{16} y^2 + \lambda^4 x^{13} y^4)|_U = \lambda^{38} + \lambda^{20} z s + \lambda^{32} {u^*}^{3} + \cdots,
\]
which has an admissible critical point at $\msp$.
This completes the proof.
\end{proof}

\begin{Prop} \label{prop:No103checkT}
The family \emph{No.~103} together with $T$ satisfies \emph{Condition \ref{cd:T}}.
\end{Prop}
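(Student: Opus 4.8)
The plan is to verify Condition \ref{cd:T} for the family No.~103 together with the subspace $T$ constructed above, following the same two-step scheme used for the type $\mathrm{I}$ and type $\mathrm{II}$ families in Sections \ref{sec:typeI} and \ref{sec:typeII}. First I would dispose of Condition \ref{cd:T}.(1): a general member of the subfamily parametrized by $T_{\mbC}$ is the weighted hypersurface $w^2 + f = 0$ with $f$ general, and such a member is quasi-smooth --- hence has only isolated cyclic quotient singularities --- by the characteristic-zero quasi-smoothness criterion \cite[Theorem 3.3]{Okada2}. This is a routine check.

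The content of the proposition is Condition \ref{cd:T}.(2), which I would obtain from Proposition \ref{prop:constr}. Thus the task reduces to verifying that $X$ over $\K$ of characteristic $2$, together with its base $Z = \mbP$ (with $\bar w = f$ and $Z^{\circ} = U$ the smooth open subset $(x \ne 0) \cap (y \ne 0)$), satisfies Condition \ref{cdgen}. Parts (1), (3) and (4) are immediate: $\mbP$ is well formed and normal; the base has dimension $n = 3$; and $d - a_{\operatorname{sum}} \ge 0$ with $\mcO_{\mbP}(d - a_{\operatorname{sum}})$ carrying nonzero global sections --- the last two being direct numerical checks. The only substantive part, Condition \ref{cdgen}.(2), is exactly what the two lemmas already established provide: Lemma \ref{lem:No103isolcyc} gives that $X$ has at most isolated cyclic quotient singular points along $X \setminus \pi^{-1}(U)$, and Lemma \ref{lem:No103admcrit} gives that the branching section $f$ has only admissible critical points on $U$. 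Proposition \ref{prop:constr} then yields a resolution $\varphi \colon Y \to X$ whose exceptional divisor is SNC with smooth rational components and for which $H^0(Y, \Omega_Y^{2}) \ne 0$, which is precisely Condition \ref{cd:T}.(2)(a)--(b); and $T_{\K}^{\operatorname{indep}} \ne \emptyset$ is arranged by taking $\K$ uncountable, as in the earlier cases.

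Since the two nontrivial ingredients are the two lemmas, which are already in hand, I do not expect a genuine obstacle. The one point that deserves attention, and that sets this family apart from the bulk of Table \ref{table:typeI}, is that the relevant smooth open locus is $U = (x \ne 0) \cap (y \ne 0)$ rather than $\mbP^{\circ}_{\wt = 1}$, because the ambient weighted projective space has no coordinate of weight one; one must therefore confirm that $U$ is smooth (clear) and that the complement $X \setminus \pi^{-1}(U)$ contributes only isolated cyclic quotient singular points --- which is exactly the statement of Lemma \ref{lem:No103isolcyc}. Assembling these facts with Proposition \ref{prop:constr} and the framework of Section \ref{sec:framework} then completes the verification of Condition \ref{cd:T} for family No.~103.
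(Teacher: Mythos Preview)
Your proposal is correct and follows essentially the same approach as the paper: verify Condition \ref{cd:T}.(1) via the characteristic-zero quasi-smoothness criterion \cite[Theorem 3.3]{Okada2}, and deduce Condition \ref{cd:T}.(2) from Proposition \ref{prop:constr} after checking Condition \ref{cdgen} using Lemmas \ref{lem:No103isolcyc} and \ref{lem:No103admcrit}. Your additional remarks on why the easy parts (1), (3), (4) of Condition \ref{cdgen} hold, on arranging $T_{\K}^{\operatorname{indep}} \ne \emptyset$ by choosing $\K$ uncountable, and on the role of the open set $U$ in place of $\mbP^{\circ}_{\wt = 1}$ simply make explicit what the paper leaves implicit.
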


\begin{proof}
We can verify Condition \ref{cd:T}.(1) by the quasi-smoothness criterion \cite[Theorem 3.3]{Okada2} in characteristic $0$.
We see that Condition \ref{cdgen} is satisfied by Lemmas \ref{lem:No103isolcyc} and \ref{lem:No103admcrit}, hence Condition \ref{cd:T}.(2) follows from Proposition \ref{prop:constr}.
\end{proof}

\subsection{Family No.~122} \label{sec:No122}

Let $\mcX \to \mbP^M$ be the family No.~122 consisting of the weighted hypersurfaces in $\tilde{\mbP} = \mbP (2,3,4,5,7)$ of weighted degree $14$.
We denote by $x,y,z,t,w$ the homogeneous coordinates of weight $2,3,4,5,7$, respectively.
We work over an algebraically closed field $\K$ of characteristic $2$.
Let $\Lambda$ be the set of monomials in variables $x,y,z,t$ of weighted degree $14$.
We consider weighted hypersurfaces $X \subset \tilde{\mbP}$ defined by an equation of the form
\[
w^2 + f(x,y,z,t) = 0.
\]
These $X$ are parametrized by $T_{\K}$, where $T \cong \mbA^N_{\mbZ}$ with $N = |\Lambda|$ parametrizes the polynomials in $\langle \Lambda \rangle_{\mbZ}$.
In the following we assume that $X$ is general.
We set $\mbP = \mbP (2,3,4,5)$ and $U = (x \ne 0) \cap (y \ne 0)$ of $\mbP (2,3,4,5)$.
Note that $U$ is smooth.
Let $\pi \colon X \to \mbP$ be the projection which is the double cover branched along the divisor $(f = 0) \subset \mbP$.
The arguments for this family are almost the same as in the previous subsection.

\begin{Lem} \label{lem:No122isolcyc}
$X$ is quasi-smooth along $X \setminus \pi^{-1} (U)$.
In particular $X$ has only isolated cyclic quotient singularities along $X \setminus \pi^{-1} (U)$.
\end{Lem}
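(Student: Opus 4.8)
The plan is to follow the pattern of the proof of Lemma~\ref{lem:No103isolcyc}. Since $\pi$ is induced by $[x:y:z:t:w]\mapsto[x:y:z:t]$, we have
\[
X\setminus\pi^{-1}(U)=X\cap\bigl((x=0)\cup(y=0)\bigr)=X\cap\bigl(\Pi_{\{1,2,3,4\},\tilde{\mbP}}\cup\Pi_{\{0,2,3,4\},\tilde{\mbP}}\bigr),
\]
the two coordinate hyperplanes of $\tilde{\mbP}$ (here $w=x_4$ is the coordinate of weight $7$, so $n+1=4$, $m=2$). Applying Lemma~\ref{lem:qsmcritypeI} once with $I=\{1,2,3\}$ and once with $I=\{0,2,3\}$, it suffices to prove that $\Lambda$, the set of degree-$14$ monomials in $x,y,z,t$ of weights $2,3,4,5$, satisfies $(*)_{I,\mbP}$ for every non-empty $I\subset\{1,2,3\}$ and every non-empty $I\subset\{0,2,3\}$, where $\mbP=\mbP(2,3,4,5)$. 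Concretely this means checking the four singletons $\{x\},\{y\},\{z\},\{t\}$, the five pairs $\{x,z\},\{x,t\},\{y,z\},\{y,t\},\{z,t\}$, and the two triples $\{x,z,t\},\{y,z,t\}$.

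The bulk of the work is the bookkeeping for these subsets. For the singletons, $(*)_{\{i\},\mbP}$ follows from Lemma~\ref{lem:qsmast}(1) using $x^{7}$ (with $p=2\nmid 7$), $y^{3}t$, $z^{3}x$ and $t^{2}z$, all in $\Lambda$. For the pairs $\{y,z\}$, $\{y,t\}$, $\{x,t\}$ one invokes Lemma~\ref{lem:qsmast}(2): the monomial pairs $\{y^{3}t,\,z^{3}x\}$, $\{y^{4}x,\,t^{2}z\}$ and $\{t^{2}z,\,x^{7}\}$ verify $(*)_{\{y,z\},\mbP}$, $(*)_{\{y,t\},\mbP}$ and $(*)_{\{x,t\},\mbP}$ via parts (2)(c), (2)(c) and (2)(b) respectively. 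The remaining two pairs $\{x,z\}$ and $\{z,t\}$ are not covered by Lemma~\ref{lem:qsmast}, so there I would exhibit explicit minors:
\[
\det\bigl(M_{\{x^{7},\,x^{5}z\},\,\{x,z\}}\bigr)\big|_{\Pi^{*}_{x,z}}=x^{11},
\qquad
\det\bigl(M_{\{zt^{2},\,xz^{3}\},\,\{z,x\}}\bigr)\big|_{\Pi^{*}_{z,t}}=t^{2}z^{3},
\]
both non-zero monomials, which give $(*)_{\{x,z\},\mbP}$ and $(*)_{\{z,t\},\mbP}$.

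Finally, for the two triples $I=\{x,z,t\}$ and $I=\{y,z,t\}$ I would use the $3\times3$ minors
\[
\det\bigl(M_{\{x^{7},\,zt^{2},\,x^{3}ty\},\,\{x,z,y\}}\bigr)\big|_{\Pi^{*}_{x,z,t}}=x^{9}t^{3},
\qquad
\det\bigl(M_{\{zt^{2},\,y^{3}t,\,xz^{3}\},\,\{z,t,x\}}\bigr)\big|_{\Pi^{*}_{y,z,t}}=y^{3}z^{3}t^{2},
\]
which are non-zero monomials and hence yield $(*)_{\{x,z,t\},\mbP}$ and $(*)_{\{y,z,t\},\mbP}$. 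This establishes $(*)_{I,\mbP}$ for all the required $I$, so by Lemma~\ref{lem:qsmcritypeI} the hypersurface $X$ is quasi-smooth along $X\setminus\pi^{-1}(U)$; the ``in particular'' then follows because a quasi-smooth well-formed weighted hypersurface of dimension $3$ has only isolated singularities, all of which are cyclic quotient singularities inherited from the ambient weighted projective space.

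The only point that requires genuine care is the characteristic-$2$ bookkeeping when choosing the monomials that form the minors: several first partials of otherwise convenient monomials vanish identically there (for instance $\prt(y^{2}z^{2})/\prt y=\prt(y^{2}z^{2})/\prt z=0$ and $\prt(zt^{2})/\prt t=0$), so the sets $\Xi$ and $J$ above are picked so that the relevant Jacobian matrices are diagonal after restriction to the stratum. Apart from this, I expect the whole verification to be routine, essentially identical in shape to the computations already carried out for family No.~103.
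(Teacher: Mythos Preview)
Your proof is correct and follows essentially the same route as the paper's: both reduce, via Lemma~\ref{lem:qsmcritypeI}, to verifying $(*)_{I,\mbP}$ for all non-empty $I\subset\{1,2,3\}$ and $I\subset\{0,2,3\}$, handle the small strata directly, and exhibit explicit $3\times 3$ minors for the two triples. Your choices of monomials for the triples differ from the paper's (it uses $\{y^3t,z^3x,t^2z\}_{x,y,z}$ and $\{x^7,t^2z,tzyx\}_{x,y,z}$) but are equally valid; you are also more explicit than the paper for the pairs $\{x,z\}$ and $\{z,t\}$, where Lemma~\ref{lem:qsmast}(2) does not literally apply and the paper simply asserts the check is ``easy''.
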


\begin{proof}
As in the proof of Lemma \ref{lem:No103isolcyc}, it is enough to show that $\Lambda$ satisfies $(*)_{I,\mbP}$ for any non-empty subset $I$ of either $I_0$ or $I_1$, where $I_0 = \{1,2,3\}$ and $I_1 = \{0,2,3\}$.
By Lemma \ref{lem:qsmast}, it is easy to check $(*)_{I,\mbP}$ for $I$ with $|I| \le 2$.
For the strata $\Pi^*_{I_0,\tilde\mbP} = \Pi^*_{y,z,t}$ and $\Pi^*_{I_1,\mbP} = \Pi^*_{x,z,t}$, we have
\[
\left| \frac{\prt \{y^3 t, z^3 x, t^2 z\}}{\prt \{x, y, z\}} \right|_{\Pi^*_{y z t}} = t^3 z^3 y^2, \quad
\left| \frac{\prt \{x^7, t^2 z, t z y x\}}{\prt \{x, y, z\}} \right|_{\Pi^*_{x z t}} = t^3 z x^7,
\]
which show that $X$ is quasi-smooth along $X \setminus \pi^{-1} (U)$.
\end{proof}

\begin{Lem} \label{lem:No122admcrit}
The section $f \in H^0 (\mbP,\mcO_{\mbP} (14))$ has only admissible critical points on $U$.
\end{Lem}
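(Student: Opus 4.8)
The plan is to argue exactly as in the proof of Lemma \ref{lem:No103admcrit}, to which the present situation is entirely parallel: the ambient space $\mbP=\mbP(2,3,4,5)$ has odd dimension $n=3$, we work in characteristic $p=2$, and the cyclic cover $\pi\colon X\to\mbP$ has degree $m=2$. By case~(2) of the definition of admissible critical point, a critical point $\msp\in U$ of $f$ is admissible exactly when $\length\mcO_{\mbP,\msp}/(\prt f/\prt u_1,\prt f/\prt u_2,\prt f/\prt u_3)=2$ for local coordinates $u_1,u_2,u_3$ of $\mbP$ at $\msp$. Since in characteristic $2$ with $n$ odd this colength can never be $1$, ``$f$ has a non-admissible critical point at a fixed $\msp$'' is the closed condition ``$\msp$ is a critical point of $f$ and the colength is $\ge 3$''. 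So, as in the dimension-counting argument of Lemma \ref{lem:crittypeI}, it is enough to show that for every $\msp\in U$: (i) the restriction map $\rho\colon H^0(\mbP,\mcO_{\mbP}(14))\to\mcO_{\mbP}(14)\otimes\mcO_{\mbP}/\mfm_{\msp}^2$ is surjective, so that the sections with a critical point at $\msp$ form a linear subspace of codimension exactly $3$; and (ii) there is at least one $g\in H^0(\mbP,\mcO_{\mbP}(14))$ with an admissible critical point at $\msp$, so that the non-admissible locus has codimension $\ge 4$ in $H^0(\mbP,\mcO_{\mbP}(14))$. Together with $\dim U=3$ this forces a general $f$ to have only admissible critical points on $U$.

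To carry this out I would first reuse the identification, valid because $\gcd(2,3)=1$, of $U=(x\ne 0)\cap(y\ne 0)$ with $(\mbA^1_u\setminus\{0\})\times\mbA^2_{z,t}$, in which $x|_U$ and $y|_U$ are both represented by a single invertible coordinate $u$ and $z|_U,t|_U$ give the affine-plane factor; after rescaling coordinates I may assume $\msp$ corresponds to $(\lambda^2,0,0)$ for some $\lambda\ne 0$, and set $u^*=u-\lambda^2$, so $\mfm_{\msp}=(u^*,z,t)$. For (i) I would evaluate $\rho$ on four monomials of weighted degree $14$ whose restrictions to $U$ give, up to non-zero scalars, the basis $1,u^*,z,t$ of $\mcO_{\mbP}(14)\otimes\mcO_{\mbP}/\mfm_{\msp}^2$: a mixed monomial in $x,y$ for the constant, a pure power of $x$ for the $u^*$-direction, and monomials of the form $z\,x^ay^b$ and $t\,x^ay^b$ for the $z$- and $t$-directions (all of which exist since $14-4$ and $14-5$ lie in the numerical semigroup generated by $2$ and $3$).

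For (ii) I would mimic the explicit construction of $g$ in Lemma \ref{lem:No103admcrit}, using the Remark after the definition of admissible critical points with the chart coordinate $u^*$ playing the role of the special variable $x_1$: I want a section whose expansion at $\msp$ has no linear term, whose quadratic part is the nondegenerate form $z\,t$ in the remaining two chart coordinates, and whose cubic part has a non-zero $(u^*)^3$-term. The quadratic part $z\,t$ comes from the degree-$14$ monomial $z\,t\,x\,y$, which restricts on $U$ to $u^2zt$; the $(u^*)^3$-term is produced by taking an appropriate $\K$-linear combination of the three degree-$14$ monomials in $x,y$ (whose $U$-restrictions are consecutive powers $u^7,u^6,u^5$) so as to cancel the linear and quadratic parts of its expansion about $u=\lambda^2$ while retaining the cubic one, which is then automatically non-zero in characteristic $2$. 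The only arithmetic needed is that monomials such as $x^7$, $x^4y^2$, $xy^4$, $x^5z$, $x^3yt$ indeed have weighted degree $14$, which is the analogue here of Conditions \ref{cdcrit}.(2)--(3). The main obstacle is therefore entirely bookkeeping: confirming that the weighted degree $14$ is large enough relative to the weights $2,3,4,5$ for all these monomials to occur, and computing the $2$- and $3$-jets of the candidate $g$ in the chart with enough care that the rank-$2$ quadratic part and the non-vanishing cubic term are visible. There is no conceptual difficulty beyond that; the surjectivity in (i), the semicontinuity of colength used to see that the bad locus is closed, and the resulting dimension count are all standard.
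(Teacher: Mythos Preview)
Your proposal is correct and follows essentially the same approach as the paper's proof: reduce to the two local checks (surjectivity of $\rho$ onto $\mcO_{\mbP}(14)\otimes\mcO_{\mbP}/\mfm_{\msp}^2$, and an explicit $g$ with admissible critical point at $\msp$), using the same chart $U\cong(\mbA^1_u\setminus\{0\})\times\mbA^2$ and the same monomials $x^7,\,x^4y^2,\,xy^4,\,ztxy$ for the construction in (ii). The only cosmetic difference is your choice of monomials for (i) (e.g.\ $x^5z$ versus the paper's $zx^2y^2$), which is immaterial.
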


\begin{proof}
We show that, for any point $\msp \in U$, the map 
\[
\rho \colon H^0 (\mbP, \mcO_{\mbP} (14)) \to \mcO_{\mbP} (14) \otimes \mcO_{\mbP}/ \mfm_{\msp}^2
\] 
is surjective and there exists $g \in H^0 (\mbP, \mcO_{\mbP} (14))$ which has an admissible critical point at $\msp$, which will complete the proof.
We have an identification $U \cong (\mbA^1_u \setminus \{0\}) \times \mbA^2_{z,s}$.
where $z|_U$ and $s|_U$ are simply denoted by $z,s$, and $x|_U = y|_U = u$.
Replacing coordinates we may assume $\msp = (\lambda\!:\!1\!:\!0\!:\!0) \in U$ for some $\lambda \ne 0$ and, by the above identification, $\msp = (\lambda^2,0,0)$.
We set $u^* = u - \lambda^2$ so that $\mfm_{\msp} = (u^*,z,s)$. 
We have
\[
\rho (x^4 y^2) = \lambda^{12}, \ 
\rho (x^7) = \lambda^{14} + \lambda^{12} u^*, \ 
\rho(z x^2 y^2) = \lambda^8 z, \ 
\rho (s x^3 y) = \lambda^8 s,
\] 
which implies that $\rho$ is surjective.
Moreover we have
\[
(z s x y + x^7 + \lambda^2 x^4 y^2 + \lambda^4 x y^4)|_U = \lambda^{14} + \lambda^4 z s + \lambda^8 {u^*}^{3} + \cdots,
\]
which has an admissible critical point at $\msp$.
This completes the proof.
\end{proof}

\begin{Prop} \label{prop:No122checkT}
The family \emph{No.~122} together with $T$ satisfies \emph{Condition \ref{cd:T}}.
\end{Prop}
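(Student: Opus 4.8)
The plan is to follow the template of Proposition \ref{prop:No103checkT} essentially verbatim, since the situation for family No.~122 is entirely parallel: in both cases $X$ is a general double cover $\pi\colon X\to\mbP$ of a rank-$4$ weighted projective space, branched along a general member $(f=0)$ of $|\mcO_{\mbP}(d)|$, and the two substantive technical inputs have already been isolated as Lemmas \ref{lem:No122isolcyc} and \ref{lem:No122admcrit}. There are only two things to check: Condition \ref{cd:T}.(1) for the subfamily over $T_{\mbC}$, and Condition \ref{cd:T}.(2) over a suitable positive-characteristic base field.

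For Condition \ref{cd:T}.(1), I would apply the characteristic-$0$ quasi-smoothness criterion \cite[Theorem 3.3]{Okada2} to the family of hypersurfaces $w^2+f=0$ in $\mbP(2,3,4,5,7)$ parametrised by $T_{\mbC}$: it shows that a general such member is quasi-smooth, and being a well formed quasi-smooth Fano $3$-fold weighted hypersurface it then has only isolated cyclic quotient (terminal) singularities. For Condition \ref{cd:T}.(2), work over an uncountable algebraically closed field $\K$ of characteristic $2$ — so that $T_{\K}^{\operatorname{indep}}\ne\emptyset$ is automatic — and verify Condition \ref{cdgen} for $X$ and $Z$. Here $Z\cong\mbP(2,3,4,5)$ (eliminating the weight-$14$ variable $\bar w$), so parts (1), (3), (4) of Condition \ref{cdgen} are immediate: $\mbP(2,3,4,5)$ is well formed, since the $\gcd$ of any three of $2,3,4,5$ equals $1$, and is normal; $n=3\ge 3$; and $d-a_{\operatorname{sum}}=14-(2+3+4+5)=0\ge 0$ with $H^0(Z,\mcO_Z(0))=\K\ne 0$. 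Part (2) is exactly the conjunction of Lemmas \ref{lem:No122isolcyc} and \ref{lem:No122admcrit}, taking $Z^{\circ}=U=(x\ne 0)\cap(y\ne 0)$: the former gives that $X$ has at most isolated cyclic quotient singularities along $X\setminus\pi^{-1}(U)$, and the latter that the branch section $\bar w$ has only admissible critical points on $U$. With Condition \ref{cdgen} verified, Proposition \ref{prop:constr} produces a resolution $\varphi'\colon Y'\to X'$ whose exceptional divisor is SNC with smooth rational components and with $H^0(Y',\Omega^2_{Y'})\ne 0$, which is precisely Condition \ref{cd:T}.(2)(a),(b).

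I do not expect any genuine obstacle here: the real work was already done in the two preceding lemmas, which — exactly as for family No.~103 — reduce to exhibiting a few explicit degree-$14$ monomials (to certify quasi-smoothness along the coordinate strata via the criteria of Section \ref{sec:qsm}) and to writing down an explicit local normal form for $f$ near an arbitrary point of $U$ realising an admissible critical point of $\bar w$. The only point worth a remark is that $d-a_{\operatorname{sum}}=0$, so the reflexive sheaf $\mcM\cong\mcO_X(d-a_{\operatorname{sum}})$ on which Proposition \ref{prop:constr} operates is $\mcO_X$ itself and thus automatically carries a nowhere-vanishing global section; and, as already noted, choosing $\K$ uncountable takes care of the hypothesis $T_{\K}^{\operatorname{indep}}\ne\emptyset$.
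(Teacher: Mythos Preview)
Your proposal is correct and follows essentially the same approach as the paper's own proof: verify Condition \ref{cd:T}.(1) via the characteristic-$0$ quasi-smoothness criterion \cite[Theorem 3.3]{Okada2}, and verify Condition \ref{cd:T}.(2) by checking Condition \ref{cdgen} through Lemmas \ref{lem:No122isolcyc} and \ref{lem:No122admcrit} and then invoking Proposition \ref{prop:constr}. Your additional remarks---making explicit that $Z\cong\mbP(2,3,4,5)$ is well formed and normal, that $d-a_{\operatorname{sum}}=0$ so $\mcM\cong\mcO_X$, and that choosing $\K$ uncountable ensures $T_{\K}^{\operatorname{indep}}\ne\emptyset$---are helpful elaborations the paper leaves implicit, but the argument is the same.
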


\begin{proof}
We can verify Condition \ref{cd:T}.(1) by the quasi-smoothness criterion \cite[Theorem 3.3]{Okada2} in characteristic $0$.
We see that Condition \ref{cdgen} is satisfied by Lemmas \ref{lem:No122isolcyc} and \ref{lem:No122admcrit}, hence Condition \ref{cd:T}.(2) follows from Proposition \ref{prop:constr}.
\end{proof}

\subsection{Family No.~19} \label{sec:No19}

Let $\mcX \to \mbP^M$ be the family No.~19 consisting of the weighted hypersurfaces in $\mbP (1,2,3,3,4)$ of weighted degree $19$.
We re-order the weight and set $\tilde{\mbP} = \mbP (1,3,3,4,2)$ and denote by $x,y,z,t,w$ the homogeneous coordinates of weight $1,3,3,4,2$, respectively.
We work over an algebraically closed field $\K$ of characteristic $2$.
Let $\Lambda$ be the union of $\{w^4 t\}$ and the set of monomials in variables $x,y,z,t$ of weighted degree $12$.
We consider weighted hypersurfaces $X \subset \tilde{\mbP}$ defined by an equation of the form
\[
w^6 + \delta w^4 t + f (x,y,z,t) = 0,
\]
where $\delta \in \K$.
These $X$ are parametrized by $T_{\K}$, where $T \cong \mbA^N_{\mbZ}$ is the space parametrizing the polynomials in $\langle \Lambda \rangle_{\mbZ}$.

In the following, we assume that $X$ is general.
By re-scaling $t$, we assume that $\delta = 1$.
We define
\[
Z = (\bar{w}^3 + \bar{w}^2 t + f_{12} = 0) \subset \mbP := \mbP (1,3,3,4,4),
\]
where $\bar{w}$ is the coordinate of weight $4$ other than $t$ (so that $x,y,z,t,\bar{w}$ are the coordinates of $\mbP$), and let $\pi \colon X \to Z$ be the morphism defined by $\pi^* \bar{w} = w^2$, which is a double cover of $Z$ branched along the divisor $(\bar{w} = 0) \cap Z$.

\begin{Lem} \label{lem:No19qsmZ}
$Z$ is quasi-smooth.
\end{Lem}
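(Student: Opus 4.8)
The plan is to reduce quasi-smoothness of $Z$ to that of the degree $12$ hypersurface $Y := (f_{12} = 0) \subset \mbP (1,3,3,4)$, where $x,y,z,t$ are the homogeneous coordinates of weights $1,3,3,4$, and then to apply the criterion of Lemma \ref{lem:qsmcri2} to $Y$.

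First I would inspect the partial derivatives of $g := \bar{w}^3 + \bar{w}^2 t + f_{12}$. Since the ground field has characteristic $2$ we get $\prt g/\prt \bar{w} = 3 \bar{w}^2 + 2 \bar{w} t = \bar{w}^2$, so $\NQsm (Z) \subset (\bar{w} = 0) \subset \mbP (1,3,3,4,4)$. On the hyperplane $(\bar{w} = 0)$, which we identify with $\mbP (1,3,3,4)$, one has $g = f_{12}$, $\prt g/\prt t = \bar{w}^2 + \prt f_{12}/\prt t = \prt f_{12}/\prt t$ and $\prt g/\prt x_i = \prt f_{12}/\prt x_i$ for $x_i \in \{x,y,z\}$; hence $\NQsm (Z) = \NQsm (Y)$, and in particular $Z$ is quasi-smooth as soon as $Y$ is. Note that the monomial $\bar{w}^2 t$, and hence the constant $\delta$, plays no role in this step.

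It then remains to show that a general $Y = X_{12} \subset \mbP (1,3,3,4)$ is quasi-smooth. Since $\mbP (1,3,3,4)$ is the union of its coordinate strata, by Lemma \ref{lem:qsmcri2} it suffices to check that the set $\Lambda'$ of all weighted degree $12$ monomials in $x,y,z,t$ satisfies $(\dagger)_{I,\mbP (1,3,3,4)}$ for every non-empty $I \subseteq \{0,1,2,3\}$. For the four singletons, $(*)$ follows from Lemma \ref{lem:qsmast}.(1) using $x^9 y, y^3 z, z^3 y \in \Lambda'$ and $t^3 \in \Lambda'$ (note $2 \nmid 3$). For the pairs $\{0,1\}, \{0,2\}, \{0,3\}$ one applies Lemma \ref{lem:qsmast}.(2), for instance with the monomial sets $\{x^8 t, y^3 z\}$, $\{x^8 t, z^3 y\}$, $\{x^9 y, t^3\}$. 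For the remaining pairs and all higher strata the naive Jacobian minors coming from the pure powers $x^{12}, y^4, z^4$ are annihilated by the Frobenius, so one uses the variant $(*)'$ instead: for example $\Xi = \{y^4, y^3 z\}$, $J = \{z\}$ gives $\det (M'_{\Xi,J})|_{\Pi^*_{y,z}} = y^7$ for $I = \{1,2\}$; the pairs $\{1,3\}$ and $\{2,3\}$ are handled by $\Xi = \{y^4,t^3\}$ and $\Xi = \{z^4,t^3\}$ with $J = \{t\}$; and the three triples and the full set $\{0,1,2,3\}$ by $\Xi = \{y^4, y^3 z, x^9 y\}$, $\{y^4, t^3, x^9 y\}$, $\{z^4, t^3, x^9 z\}$, $\{y^4, y^3 z, t^3\}$ and $\{y^4, y^3 z, t^3, x^9 y\}$ with a suitably chosen $J$, each determinant restricting on the relevant stratum to a single nonzero monomial (the scalar factors that occur are odd integers, namely powers of $3$ up to sign, hence units in characteristic $2$). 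Thus $(\dagger)$ holds on every stratum, Lemma \ref{lem:qsmcri2} gives quasi-smoothness of $Y$, and with the reduction above this proves quasi-smoothness of $Z$.

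The main obstacle is the characteristic $2$ bookkeeping in the last step. Because the pure powers of the positive-weight coordinates (here $x^{12}, y^4, z^4$) are Frobenius powers, their partial derivatives vanish and the straightforward characteristic $0$ argument via $2$- and $3$-minors of the Jacobian matrix collapses on the strata $\Pi^*_{y,z}$, $\Pi^*_{y,t}$, $\Pi^*_{z,t}$ and on all larger strata. One therefore has to pick, on each such stratum, a mixed monomial (such as $y^3 z$ or $x^9 y$) whose derivative survives and pass to the matrices $M'$, which adjoin the monomials themselves as an extra row, in order to secure the rank bound $\rank M'_{\Lambda'}(\msp) \ge |I|$ required to invoke Lemma \ref{lem:qsmcri1}. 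Everything else is routine.
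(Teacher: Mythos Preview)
Your proof is correct and follows essentially the same route as the paper: both reduce to the locus $(\bar{w}=0)$ via the $\bar{w}$-derivative and then verify $(\dagger)$ stratum by stratum using Lemma~\ref{lem:qsmcri2}, switching to $(*)'$ where Frobenius kills the naive Jacobian minors. Your packaging of the reduction as the equality $\NQsm(Z)=\NQsm(Y)$ for $Y=(f_{12}=0)\subset\mbP(1,3,3,4)$ is slightly cleaner than the paper's direct check for $\overline{\Lambda}=\{\bar{w}^3,\bar{w}^2 t\}\cup\Lambda'$ on the strata in $(\bar{w}=0)$ (the extra monomials $\bar{w}^3,\bar{w}^2 t$ contribute nothing there anyway), and your observation that $\prt g/\prt\bar{w}=\bar{w}^2$ is in fact the correct computation in characteristic~$2$; the paper's displayed value $\bar{w}$ is a typo.
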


\begin{proof}
We see that $Z$ is quasi-smooth along $(\bar{w} \ne 0)$ since
\[
\frac{\prt (\bar{w}^3 + \bar{w}^2 t + f_{12})}{\prt \bar{w}} = \bar{w}^2.
\]
Let $\overline{\Lambda}$ be the union of $\{\bar{w}^3, \bar{w}^2 t\}$ and the set of monomials in variables $x,y,z,t$ of weighted degree $12$, so that $Z$ is a general member of $\mcL (\overline{\Lambda})$.
By Lemma \ref{lem:qsmcri2}, it is enough to show that $\overline{\Lambda}$ satisfies $(\dagger)_{I,\mbP}$ for any $I$ such that $\Pi_{I,\mbP}^* \subset (\bar{w} = 0)$.
The existence of monomials $x^{12},y^4,z^4,t^3 \in \overline{\Lambda}$ shows that $(\dagger)$ is satisfied for each vertex, i.e., $\Pi^*_x, \Pi^*_y,\Pi^*_z, \Pi^*_t$.
For the $1$-dimensional strata, we have
\[
\left| \frac{\prt \{x^9 z,y^4\}}{\prt z} \right|'_{\Pi^*_{x,y}} = x^9 y^4, \quad
\left| \frac{\prt \{x^9 y,z^4\}}{\prt z} \right|'_{\Pi^*_{x,z}} = x^9 z^4, \quad
\left| \frac{\prt \{x^{12},t^3\}}{\prt t} \right|'_{\Pi^*_{x,t}} = x^{12} t^2,
\]
\[
\left| \frac{\prt \{y^4,z^3 y\}}{\prt y} \right|'_{\Pi^*_{y,z}} = y^4 z^3, \quad
\left| \frac{\prt \{y^4,t^3\}}{\prt t} \right|'_{\Pi^*_{y,t}} = y^4 t^2, \quad
\left| \frac{\prt \{z^4,t^3\}}{\prt t} \right|'_{\Pi^*_{z,t}} = z^4 t^2.
\]
Here, for example,
\[
\left| \frac{\prt \{x^9 z,y^4\}}{\prt z} \right|'_{\Pi^*_{x,y}} = \det \left(M'_{\{x^9 z,y^4\},\{z\}} \right)|_{\Pi^*_{x,y}}
\]
and similarly for the others.
For $2$-dimensional strata, we have
\[
\left| \frac{\prt \{z^3 y,z x^9,x^{12}\}}{\prt \{y,z\}} \right|'_{\Pi^*_{x,y,z}} = z^3 x^{21}, \quad
\left| \frac{\prt \{t^3,y x^9,x^{12}\}}{\prt \{y,t\}} \right|'_{\Pi^*_{x,y,t}} = t^2 x^{21},
\]
\[
\left| \frac{\prt \{t^3,z x^9,x^{12}\}}{\prt \{z,t\}} \right|'_{\Pi^*_{x,z,t}} = t^2 x^{21}, \quad
\left| \frac{\prt \{t^3,y z^3,y^4\}}{\prt \{y,t\}} \right|'_{\Pi^*_{y,z,t}} = t^2 z^3 y^4.
\]
Finally, for the $3$-dimensional stratum $\Pi^*_{x,y,z,t}$, we have
\[
\left| \frac{\prt \{t^3,z^3 y,z x^9, x^{12}\}}{\prt \{y,z,t\}} \right|'_{\Pi^*_{x,y,z,t}} = t^2 z^3 x^{21}.
\]
Thus $Z$ is quasi-smooth.
\end{proof}

We set $Z^{\circ} = Z \cap \mbP^{\circ}_{\wt = 1} = Z \cap (x \ne 0)$ and $X^{\circ} = \pi^{-1} (Z^{\circ}) = X \cap (x \ne 0)$.

\begin{Lem} \label{lem:No19cyccing}
$X$ has only isolated cyclic quotient singularities along $X \setminus X^{\circ}$.
\end{Lem}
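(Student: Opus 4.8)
The plan is to prove that $X$ is quasi-smooth along $X \setminus X^{\circ} = X \cap (x = 0)$; granting this, the statement on isolated cyclic quotient singularities follows as in the earlier lemmas, since $\Sing \tilde{\mbP}$ consists of the two disjoint curves $\Pi_{y,z}$ and $\Pi_{t,w}$ (both contained in $(x=0)$ and not contained in the general $X$), so that the singularities of $X$ are finitely many cyclic quotient singularities, all of them lying on $X \setminus X^{\circ}$.

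First I would reduce to the locus $(w \neq 0)$ using the quasi-smoothness of $Z$. After re-scaling $t$ so that $\delta = 1$, write $F = w^6 + w^4 t + f_{12}$ and $G = \bar w^3 + \bar w^2 t + f_{12}$ for the defining polynomials of $X$ and $Z$, so that $F = G(x,y,z,t,w^2)$ under $\pi^* \bar w = w^2$. Since $\operatorname{char} \K = 2$ we have $\partial F/\partial w \equiv 0$, so $\NQsm(X)$ is cut out in $X$ by $\partial F/\partial x = \partial F/\partial y = \partial F/\partial z = \partial F/\partial t = 0$, and by the chain rule these are the partials of $G$ in $x,y,z,t$ evaluated at $\bar w = w^2$. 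As $\partial G/\partial \bar w = \bar w^2$ (again because $\operatorname{char}\K = 2$), if $\msp \in \NQsm(X)$ satisfies $w(\msp) = 0$ then its image $\pi(\msp)$ satisfies all of $\partial G/\partial x = \partial G/\partial y = \partial G/\partial z = \partial G/\partial t = \partial G/\partial \bar w = G = 0$, i.e.\ $\pi(\msp) \in \NQsm(Z)$, which is empty by Lemma \ref{lem:No19qsmZ}, a contradiction. Hence $\NQsm(X) \subseteq (w \neq 0)$, and it remains to exclude points of $\NQsm(X)$ with $x = 0$ and $w \neq 0$.

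Let $\msp$ be such a point and put $\msq = \pi(\msp) = (0\!:\!y_0\!:\!z_0\!:\!t_0\!:\!\omega) \in Z$ with $\omega \neq 0$. Counting weighted-degree $12$ and $11$ monomials in $y,z,t$ one gets $f_{12}(0,y,z,t) = q_4(y,z) + \lambda t^3$ with $q_4$ a general quartic form and $\lambda \neq 0$, and $(\partial f_{12}/\partial x)(0,y,z,t) = t^2(\alpha' y + \alpha'' z)$ with $\alpha', \alpha''$ general. Writing out $\partial G/\partial x = \partial G/\partial y = \partial G/\partial z = \partial G/\partial t = G = 0$ at $\msq$ then forces $\omega^2 = \lambda t_0^2$ (hence $t_0 \neq 0$), $\alpha' y_0 + \alpha'' z_0 = 0$, $(\partial q_4/\partial y)(y_0,z_0) = (\partial q_4/\partial z)(y_0,z_0) = 0$, and $\omega^3 + \omega^2 t_0 + q_4(y_0,z_0) + \lambda t_0^3 = 0$. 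A direct computation in characteristic $2$ yields $\partial q_4/\partial y = z(a_1 y^2 + a_3 z^2)$ and $\partial q_4/\partial z = y(a_1 y^2 + a_3 z^2)$, where $a_1, a_3$ are (general) coefficients of $q_4$, so the vanishing of these two partials forces $a_1 y_0^2 + a_3 z_0^2 = 0$. If $y_0 = z_0 = 0$, substituting $\omega^2 = \lambda t_0^2$ into the last equation and cancelling in characteristic $2$ reduces it to $\omega \lambda t_0^2 = 0$, which is impossible; if $(y_0, z_0) \neq (0,0)$, then $a_1 y_0^2 + a_3 z_0^2 = 0$ pins down $[y_0 : z_0]$ in terms of $a_1, a_3$, and the relation $\alpha' y_0 + \alpha'' z_0 = 0$ then becomes a nontrivial algebraic identity among the mutually independent parameters $\alpha', \alpha'', a_1, a_3$, impossible for a general member of the family. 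Thus $\NQsm(X) \cap (x = 0) = \emptyset$, as required.

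The monomial bookkeeping and the reduction through $Z$ are routine given Lemma \ref{lem:No19qsmZ}; the delicate point is the final case distinction, where the characteristic-$2$ cancellations must be tracked exactly — in particular $\lambda t_0^3 + \lambda t_0^3 = 0$, which collapses the last equation, and the fact that $a_1 y^2 + a_3 z^2 = (\sqrt{a_1}\,y + \sqrt{a_3}\,z)^2$ cuts out a single line — and one must confirm that $\lambda, a_1, a_3, \alpha', \alpha''$ are genuinely independent coefficients of the general $f_{12}$, so that the exceptional relation is avoided.
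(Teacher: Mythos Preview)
Your proof is correct, and the first step---reducing to $(w\ne 0)$ via $\NQsm(X)\cap(w=0)=\pi^{-1}(\NQsm(Z))=\emptyset$ using Lemma~\ref{lem:No19qsmZ}---is exactly what the paper does.

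The second step, however, takes a genuinely different route. The paper proceeds stratum by stratum: for each non-empty $I$ with $\Pi^*_{I,\tilde{\mbP}}\subset(x=0)\cap(w\ne0)$ (namely $\Pi^*_w,\Pi^*_{y,w},\Pi^*_{z,w},\Pi^*_{t,w},\Pi^*_{y,z,w},\Pi^*_{y,t,w},\Pi^*_{z,t,w},\Pi^*_{y,z,t,w}$) it exhibits explicit monomials in $\Lambda\cup\{w^6,w^4t\}$ verifying the rank condition $(\dagger)_{I,\tilde{\mbP}}$, and then invokes Lemma~\ref{lem:qsmcri2}. You instead write down globally the system $\partial F/\partial x=\partial F/\partial y=\partial F/\partial z=\partial F/\partial t=F=0$ restricted to $(x=0)$, identify the relevant coefficients $\lambda,a_1,a_3,\alpha',\alpha''$ of $f_{12}$ (which are indeed the coefficients of the distinct monomials $t^3,y^3z,yz^3,xt^2y,xt^2z$, hence independent), and eliminate variables directly, using in an essential way the characteristic-$2$ identities $\partial q_4/\partial y=z(a_1y^2+a_3z^2)$, $\partial q_4/\partial z=y(a_1y^2+a_3z^2)$ and the cancellation $\lambda t_0^3+\lambda t_0^3=0$. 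Your argument is more elementary and makes transparent exactly which coefficients of $f_{12}$ obstruct a non-quasi-smooth point; the paper's approach is more uniform with the machinery of Section~\ref{sec:qsm} and does not require tracking the specific characteristic-$2$ arithmetic.
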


\begin{proof}
We claim that $X$ is quasi-smooth along $X \cap (w = 0)$.
We denote by $\NQsm (X)$ and $\NQsm (Z)$ the non-quasi-smooth loci of $X$ and $Z$, respectively.
Then it is easy to check that 
\[
\NQsm (X) \cap (w = 0) = \pi^{-1} (\NQsm (Z)),
\]
which proves our claim since $\NQsm (Z) = \emptyset$ by Lemma \ref{lem:No19qsmZ}.

Note that $X \setminus X^{\circ} = X \cap (x = 0)$.
We need to show that $X$ is quasi-smooth along $X \cap (x = 0) \cap (w \ne 0)$.
By Lemma \ref{lem:qsmcri2}, it is enough to show that $\Lambda \cup \{w^6\}$ satisfies the condition $(\dagger)_{I,\tilde{\mbP}}$ for any $I$ such that $x$ vanishes along $\Pi^*_{I,\tilde{\mbP}}$ but $w$ does not.  
Specifically, it is enogh to check $(\dagger)$ for the strata 
\[
\Pi^*_w, \ \Pi^*_{y,w}, \ \Pi^*_{z,w}, \ \Pi^*_{t,w}, \ \Pi^*_{y,z,w}, \ \Pi^*_{y,t,w}, \ \Pi^*_{z,t,w}, \ \Pi^*_{y,z,t,w}.
\]
It is clear that $(\dagger)$ is satisfied for the $0$-dimensional stratum $\Pi^*_w$ since $w^6 \in \Lambda \cup \{w^6\}$.
For $1$-dimensional strata, we have
\[
\left| \frac{\prt \{w^6,y^3 z\}}{\prt z} \right|'_{\Pi^*_{y,w}} = w^6 y^3, \quad
\left| \frac{\prt \{w^6,z^3 y\}}{\prt y} \right|'_{\Pi^*_{z,w}} = w^6 z^3, \quad
\left| \frac{\prt \{w^6,t^3\}}{\prt t} \right|'_{\Pi^*_{t,w}} = w^6 t^2.
\]
For $2$-dimensional strata, we have
\[
\left| \frac{\prt \{w^4 t,y^4,y^3 z\}}{\prt \{z,t\}} \right|'_{\Pi^*_{y,z,w}} = w^4 y^7,
\]
and
\[
\left| \frac{\prt \{w^6,t^3,z y^3\}}{\prt \{z,t\}} \right|'_{\Pi^*_{y,t,w}} = w^6 t^2 y^3, \quad 
\left| \frac{\prt \{w^6,t^3,z^3 y\}}{\prt \{y,t\}} \right|'_{\Pi^*_{z,t,w}} = w^6 t^2 z^3
\]
Finally, for the $3$-dimensional stratum $\Pi^*_{y,z,t,w}$, we have
\[
\left| \frac{\prt \{w^6,t^3,z^3 y,t^2 z x\}}{\prt \{x,y,t\}} \right|'_{\Pi^*_{y,z,t,w}} = w^6 t^4 z^4.
\]
Therefore $X$ is quasismooth along $X \setminus X^{\circ}$.
\end{proof}

\begin{Lem} \label{lem:No19admcrit}
The section $\bar{w} \in H^0 (Z, \mcO_Z (4))$ has only admissible critical points on $Z^{\circ}$.
\end{Lem}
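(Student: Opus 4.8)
The strategy is the incidence-variety dimension count used for the critical-point lemmas of Section~\ref{sec:critical} and for Lemmas~\ref{lem:No103admcrit} and~\ref{lem:No122admcrit}, adapted to the fact that here $Z$ is a genuine hypersurface and $\bar w$ enters its equation cubically. Work in the chart $(x\ne 0)\subset\mbP(1,3,3,4,4)$, which (since $x$ has weight $1$) is $\mbA^4$ with affine coordinates $\bar y=y/x^3$, $\bar z=z/x^3$, $\bar t=t/x^4$, $\bar u=\bar w/x^4$; then $Z^{\circ}=Z\cap(x\ne 0)$ is the smooth hypersurface $G:=\bar u^3+\bar u^2\bar t+\bar f=0$ with $\bar f=f_{12}(1,\bar y,\bar z,\bar t)$ general, and the local expression of $s=\bar w$ relative to the local generator $x^4$ of $\mcO_Z(mb)=\mcO_Z(4)$ is the restriction of $\bar u$ to $Z^{\circ}$. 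Since $p=2$ we have $\prt G/\prt\bar u=3\bar u^2+2\bar u\bar t=\bar u^2$.

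First I would locate the critical points. On $(\bar u=0)\cap Z^{\circ}$ quasi-smoothness of $Z$ (Lemma~\ref{lem:No19qsmZ}) forces $(\bar f_{\bar y},\bar f_{\bar z},\bar f_{\bar t})$ to be nonzero, so $dG$ involves only $d\bar y,d\bar z,d\bar t$; hence $d\bar u$ does not vanish on $Z^{\circ}$ along the branch divisor and $\bar w$ has no critical point there. Away from $(\bar u=0)$ the element $\prt G/\prt\bar u=\bar u^2$ is a unit, so $\bar y,\bar z,\bar t$ are local coordinates on $Z^{\circ}$, and implicit differentiation of $G=0$ gives $\bar u_{\bar y}=-\bar f_{\bar y}/\bar u^2$, $\bar u_{\bar z}=-\bar f_{\bar z}/\bar u^2$, $\bar u_{\bar t}=-(\bar u^2+\bar f_{\bar t})/\bar u^2$. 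Thus $\bar w$ is critical at $\msq=(\bar y_0,\bar z_0,\bar t_0,\mu)$ with $\mu\ne 0$ precisely when $\bar f_{\bar y}(\msq)=\bar f_{\bar z}(\msq)=0$, $\bar f_{\bar t}(\msq)=\mu^2$ and $\mu^3+\mu^2\bar t_0+\bar f(\msq)=0$, and there $(\bar u_{\bar y},\bar u_{\bar z},\bar u_{\bar t})=(\bar f_{\bar y},\bar f_{\bar z},\bar u^2+\bar f_{\bar t})$ as ideals of $\mcO_{Z^{\circ},\msq}$, because $\bar u^2$ is a unit.

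Now set $\mcW^{\crit}\subset T\times W$, with $W=(x\ne 0)\cap(\bar u\ne 0)$, to be the locus of pairs $(f_{12},\msq)$ with $\bar w$ critical at $\msq\in Z_{f_{12}}^{\circ}$, and $\mcW\subset\mcW^{\crit}$ the sublocus where the critical point is non-admissible. The four conditions above are linear in the coefficients of $f_{12}$; after translating $\bar y,\bar z$ by constant multiples of $x^3$ (which preserves the family) so that $\bar y_0=\bar z_0=0$, the monomials $x^{12},x^9y,x^9z,x^8t$, all of degree $12$, show these functionals are independent for every $\msq$, so each fibre of $\mcW^{\crit}\to W$ has codimension exactly $4$ in $T$ and, $\dim W$ being $4$, $\dim\mcW^{\crit}\le\dim T$. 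For $\mcW$ I would exploit that $\dim Z^{\circ}=3$ is odd and $p=2$: at a critical point the linear parts of $\bar f_{\bar y},\bar f_{\bar z},\bar u^2+\bar f_{\bar t}$ are the rows of the Hessian matrix of $\bar f$ at $\msq$, which in characteristic $2$ has zero diagonal, hence is alternating, hence of rank $0$ or $2$; consequently $\length\mcO_{Z^{\circ},\msq}/(\bar u_{\bar y},\bar u_{\bar z},\bar u_{\bar t})\ge 2$ always, so ``non-admissible'' means length $\ge 3$, a closed condition. It therefore suffices to exhibit, for each $\msq\in W$, one $f_{12}$ with $\bar w$ critical at $\msq$ and this length equal to $2$: choose $\bar f$ whose Hessian at $\msq$ is $\bar y\bar z$ (so $\bar f_{\bar y}\equiv\bar z$, $\bar f_{\bar z}\equiv\bar y$ modulo $\mfm_{\msq}^2$ while $\bar u^2+\bar f_{\bar t}\in\mfm_{\msq}^2$) and with a nonzero coefficient of the monomial $t^3$; then, eliminating $\bar y,\bar z$ by the first two generators leaves the complete local ring of a smooth coordinate curve, and one checks $\bar u^2+\bar f_{\bar t}$ restricted to it equals $(\bar t-\bar t_0)^2$ times a unit, so $\mcO_{Z^{\circ},\msq}/(\bar u_{\bar y},\bar u_{\bar z},\bar u_{\bar t})\cong\K[\bar t-\bar t_0]/((\bar t-\bar t_0)^2)$ has length $2$. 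Hence $\mcW_{\msq}\subsetneq\mcW^{\crit}_{\msq}$ for every $\msq$, so $\dim\mcW\le\dim T-1$ and the image of $\mcW$ in $T$ is a proper closed subset; a general $f_{12}$ avoids it and makes $Z^{\circ}$ quasi-smooth (Lemma~\ref{lem:No19qsmZ}), giving the claim.

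The step I expect to be the main obstacle is the explicit length-$2$ computation: one must control the power series $\bar u=\bar u(\bar y,\bar z,\bar t)$ solving $G=0$ precisely enough — in particular $\bar u(\msq)=\mu$ and, from differentiating $\bar u^2(\bar u+\bar t)=\bar f$, that the linear $\bar t$-coefficient of $\bar u$ along the eliminated curve vanishes in characteristic $2$ — to see that the $(\bar t-\bar t_0)^2$-coefficient of $\bar u^2+\bar f_{\bar t}$ there is the coefficient of $t^3$ in $f_{12}$ up to lower-order corrections, hence can be made nonzero. Since the ``nondegenerate Hessian'' shortcut is unavailable in characteristic $2$ (the Hessian is alternating), this genuinely uses the normal form of the Remark following the definition of admissible critical point, but it reduces to a finite check once the model polynomial is written down.
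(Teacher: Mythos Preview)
Your approach is correct and essentially the same as the paper's. Both arguments reduce the problem to the chart $(x\ne 0)$, separately handle the branch locus $(\bar w=0)$ (where $\bar w$ is part of a local coordinate system on $Z$), set up the incidence variety $\mcW^{\crit}\subset \mcF\times U$ with $U=(\bar w\ne 0)\cap(x\ne 0)$, verify that the fibre over each point has codimension $4$, and finish by exhibiting for each $\msq$ one explicit $f_{12}$ producing an admissible (length~$2$) critical point.

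The only real difference is computational packaging. The paper computes the implicit power series for $\bar w$ directly: writing $\bar w=\lambda+h_1+h_2+\cdots$ and comparing degrees in $\bar w^3+\bar w^2 t+g=0$ yields $h_1=t^*+\lambda^{-2}g_1$, and after imposing $h_1=0$ one gets the very clean relations $h_2=\lambda^{-2}g_2$ and $h_3=\lambda^{-2}g_3$. This lets one read off the normal form of $\bar w$ immediately from the quadratic and cubic parts $g_2,g_3$ of $g$, so the explicit example is just $g_2=\lambda^2(yz+{t^*}^2)$ and $g_3=\lambda^2{t^*}^3+\cdots$, giving $\bar w=\lambda+yz+{t^*}^2+{t^*}^3+\cdots$, visibly of length~$2$ by the Remark following the definition. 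Your route through the Jacobian ideal $(\bar f_{\bar y},\bar f_{\bar z},\bar u^2+\bar f_{\bar t})$ is equivalent, and your identification of the ``main obstacle'' is exactly the step that the direct power series computation sidesteps: once you know $h_2,h_3$ are simply $\lambda^{-2}g_2,\lambda^{-2}g_3$, there is nothing to check beyond choosing $g_2,g_3$ with the right shape. So the paper's calculation is shorter, but the content is the same.
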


\begin{proof}
For $g \in \langle \Lambda \rangle_{\K}$, let $Z_g$ be the weighted hypersurface in $\mbP$ defined by $\bar{w}^3 + \bar{w}^2 t + g = 0$ (so that we have  $Z = Z_f$).
We see that, for a point $\msp \in (\bar{w} = 0) \cap Z_g$, the section $\bar{w}$ can be chosen as a part of local coordinates of $Z_g$ at $\msp$, so that $\bar{w}$ does not have a critical point at any point of $\msp \in Z_g \cap (\bar{w} = 0)$.

We set $U = (\bar{w} \ne 0) \cap \mbP^{\circ}_{\wt = 1} \subset \mbP$.
Let $\mcF$ be the affine space parametrizing the homogeneous polynomials in variables $x,y,z,t$ of weighted degree $19$ and define
\[
\begin{split}
\mcW^{\crit} &= \{\, (g,\msp) \mid \text{$\bar{w}$ has a critical point at $\msp \in Z_g$} \,\} \subset \mcF \times U, \\
\mcW^{\operatorname{na}} &= \{\, (g,\msp) \mid \text{$\bar{w}$ has a non-admissible critical point at $\msp \in Z_g$} \,\} \subset \mcW^{\crit}.
\end{split} 
\]
Let $\msp \in U$ be a point.
We will compute the dimension of the fibers $\mcW_{\msp}^{\crit}$ and $\mcW_{\msp}^{\operatorname{na}}$ over $\msp$ of the projections $\mcW^{\crit} \to U$ and $\mcW^{\operatorname{na}} \to U$, respectively.
Since $\msp \in U \subset \mbP^{\circ}_{\wt = 1}$ and $\mbP^{\circ}_{\wt = 1} = (x \ne 0)$, the section $x$ does not vanish at $\msp$ and thus we may assume $\msp = (1\!:\!0\!:\!0\!:\!\lambda\!:\!\mu)$ for some $\lambda,\mu \in \K$ with $\mu \ne 0$ after replacing $y$ and $z$.

By setting $x = 1$, we think of $U$ as an open subset $(\bar{w} \ne 0)$ of the affine space $\mbA^4$ with coordinates $y,z,t,\bar{w}$.
The point $\msp$ corresponds to $(0,0,\lambda,\mu) \in \mbA^4$.
We see that $Z_g \cap U$ is defined by $\bar{w}^3 + \bar{w}^2 t + g (1,y,z,t) = 0$.
We set $t^* = t - \lambda$ and write $g (1,y,z,t) = \alpha + g_1 + g_2 + \cdots$, where $g_i = g_i (y,z,t^*)$ is homogeneous of degree $i$.
Note that $y,z,t^*$ can be chosen as local coordinates of $Z_g$ at $\msp$.
Passing to the completion $\hat{\mcO}_{Z_g,\msp} \cong \K [[y,z,t^*]]$, we think of $\bar{w} = \bar{w} (y,z,t^*)$ as a formal power series in $y,z,t^*$.
We write $\bar{w} = \mu + h_1 + h_2 + \cdots$, where $h_i = h_i (y,z,t^*)$ is homogeneous of degree $i$.
By looking at the constant terms and the degree $1$ terms in the equation $\bar{w}^3 + \bar{w}^2 t + \alpha + g_1 + g_2 + \cdots = 0$, we have the relations:
\[
\alpha = \mu^3 + \mu^2 \lambda, \ 
h_1 = t^* + \lambda^{-2} g_1.
\]
Note that $\alpha = \mu^3 + \mu^2 \lambda$ is equivalent to the condition $\msp \in Z_g$.
The section $\bar{w}$ has a critical point if and only if $h_1 = 0$, that is, $t^* + \mu^{-2} g_1 = 0$.
This implies that $4$ conditions are imposed in order for $(g,\msp)$ to be contained in $\mcW^{\crit}_{\msp}$, that is, $\mcW^{\crit}_{\msp}$ is of codimension $4$ in $\mcF \times \{\msp\}$.

In the following, we keep the above setting and we assume that $(g,\msp) \in \mcW^{\crit}_{\msp}$ and we will show that $\mcW^{\operatorname{na}}_{\msp} \ne \mcW^{\crit}_{\msp}$.
Now we have $\alpha = \mu^3 + \mu^2 \lambda$ and $h_1 = t^* + \mu^{-2} g_1 = 0$.
By looking at the degree $2$ and $3$ terms in the defining equation of $Z_g \cap U$, we have
\[
h_2 = \mu^{-2} g_2, \ 
h_3 = \mu^{-2} g_3,
\]
that is,
\[
\bar{w} = \mu + \mu^{-2} g_2 + \mu^{-3} g_3 + \cdots.
\]
We explicitly construct $g$ as follows:
\[
g = \mu x^{19} + \mu^2 (t-\lambda x^4) x^{15} + \mu^2 (yz x^{13} + \mu^2 (t-\lambda x^4)^2 x^{11}) + (t-\lambda x^4)^3 x^7 + \cdots,
\]
For the above $g$, we have $g_1 = \mu^2 t^*$, $g_2 = \mu^2 (y z + {t^*}^2)$ and $g_3 = \mu^2 ({t^*}^3 + \cdots)$, so that $\bar{w}$ has an admissible critical point at $\msp$.
This shows that $\mcW^{\operatorname{na}}_{\msp} \ne \mcW^{\crit}_{\msp}$. 
Therefore, since $\dim U = 4$, we conclude the section $\bar{w}$ has only admissible critical point on $Z^{\circ} = Z \cap U$ for a general $f$.
\end{proof}

\begin{Prop} \label{prop:No19checkT}
The family \emph{No.~19} together with $T$ satisfies \emph{Condition \ref{cd:T}}.
\end{Prop}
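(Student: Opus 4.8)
The plan is to follow the template of Propositions~\ref{prop:No103checkT} and~\ref{prop:No122checkT}: first establish Condition~\ref{cd:T}.(1) over $\mbC$ by a characteristic-zero quasi-smoothness argument, and then establish Condition~\ref{cd:T}.(2) by checking that the pair $(X,Z)$ constructed above satisfies Condition~\ref{cdgen} and quoting Proposition~\ref{prop:constr}. What distinguishes family No.~19 is that the construction is run with the smallest possible covering degree: after rescaling $t$ so that $\delta=1$, a general member $X$ is presented as an inseparable \emph{double} cover $\pi\colon X\to Z$ with $\pi^{*}\bar w=w^{2}$ of the weighted hypersurface $Z=(\bar w^{3}+\bar w^{2}t+f_{12}=0)\subset\mbP(1,3,3,4,4)$, rather than as a pure degree-$6$ cyclic cover of $\mbP(1,3,3,4)$. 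It is this pair, with $n=3$, $m=2$, $b=2$ and $\mcL=\mcO_{Z}(2)$, to which the construction of Section~\ref{sec:resol} applies; the role of the term $\delta w^{4}t$ is precisely that it permits this presentation with $m=2$, for which admissibility of a critical point of $\bar w$ amounts only to the length-two condition, and that it makes $Z$ a quasi-smooth hypersurface.

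For Condition~\ref{cd:T}.(1) I would note that a general member of the subfamily parametrized by $T_{\mbC}$ is the hypersurface $w^{6}+\delta w^{4}t+f_{12}=0$ in $\mbP(1,2,3,3,4)$, whose quasi-smoothness for general $\delta$ and $f_{12}$ follows from the characteristic-zero quasi-smoothness criterion \cite[Theorem~3.3]{Okada2}, the underlying monomial bookkeeping being the characteristic-zero analogue of that in Lemma~\ref{lem:No19qsmZ}; being a quasi-smooth Fano $3$-fold weighted hypersurface, such a member then has only isolated cyclic quotient terminal singularities.

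For Condition~\ref{cd:T}.(2) I would work over an uncountable algebraically closed field $\K$ of characteristic $2$, so that $T_{\K}^{\operatorname{indep}}\ne\emptyset$, and verify the four clauses of Condition~\ref{cdgen} for $(X,Z,\pi,\mcL)$. Clause~(1): well-formedness of $Z\subset\mbP(1,3,3,4,4)$ is a routine check of the kind left to the reader in the type~$\mathrm{II}$ case, and normality is immediate from the quasi-smoothness of $Z$ proved in Lemma~\ref{lem:No19qsmZ}. Clause~(2): take $Z^{\circ}=Z\cap(x\ne 0)$, which is smooth because $x$ has weight $1$, and combine Lemma~\ref{lem:No19cyccing}, giving that $X$ has only isolated cyclic quotient singularities along $X\setminus\pi^{-1}(Z^{\circ})$, with Lemma~\ref{lem:No19admcrit}, giving that $\bar w$ has only admissible critical points on $Z^{\circ}$; since an admissible critical point is an isolated critical point, $\Sing X$ is finite and the branch divisor $(\bar w=0)\cap Z$ is reduced. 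Clause~(3) is $n=3\ge 3$. Clause~(4) is the elementary observation that $d-a_{\operatorname{sum}}\ge 0$ and $H^{0}(Z,\mcO_{Z}(d-a_{\operatorname{sum}}))\ne 0$, a section being supplied by the weight-one coordinate $x$. Proposition~\ref{prop:constr} then produces a resolution $\varphi\colon Y\to X$ whose exceptional divisor is simple normal crossing with smooth rational components and with $H^{0}(Y,\Omega_{Y}^{2})\ne 0$, which, together with the finiteness of $\Sing X$ and the choice of uncountable $\K$, is exactly Condition~\ref{cd:T}.(2).

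The step I expect to demand the most care is matching our concrete presentation of $X$ as an inseparable double cover of $Z$ to the hypotheses of Proposition~\ref{prop:constr}: one must be sure that $Z^{\circ}$ is genuinely a smooth open subset carrying the covering data $(\mcL|_{Z^{\circ}},\bar w)$, that $X$ has isolated singularities everywhere --- off $\pi^{-1}(Z^{\circ})$ by Lemma~\ref{lem:No19cyccing}, and over $Z^{\circ}$ by isolatedness of admissible critical points --- and that clause~(4) is applied with the correct value of $a_{\operatorname{sum}}$. All the genuinely substantial input is already packaged in Lemmas~\ref{lem:No19qsmZ}, \ref{lem:No19cyccing} and~\ref{lem:No19admcrit}, so what remains is bookkeeping and citation.
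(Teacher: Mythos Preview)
Your proposal is correct and follows essentially the same approach as the paper's own proof: verify Condition~\ref{cd:T}.(1) via the characteristic-zero quasi-smoothness criterion \cite[Theorem~3.3]{Okada2}, and verify Condition~\ref{cd:T}.(2) by checking that Condition~\ref{cdgen} holds for the pair $(X,Z)$ using Lemmas~\ref{lem:No19qsmZ}, \ref{lem:No19cyccing} and~\ref{lem:No19admcrit}, then invoking Proposition~\ref{prop:constr}. The paper's proof is a terse two-sentence version of exactly this, without the explicit clause-by-clause verification of Condition~\ref{cdgen} or the computation of $d-a_{\operatorname{sum}}$ that you spell out.
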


\begin{proof}
We can verify Condition \ref{cd:T}.(1) by the quasi-smoothness criterion \cite[Theorem 3.3]{Okada2} in characteristic $0$.
We see that Condition \ref{cdgen} is satisfied by Lemmas \ref{lem:No19qsmZ}, \ref{lem:No19cyccing} and \ref{lem:No19admcrit}, hence Condition \ref{cd:T}.(2) follows from Proposition \ref{prop:constr}.
\end{proof}

Now Theorems \ref{mainthm1} and \ref{mainthm2} follow from Propositions \ref{prop:rat}, \ref{prop:typeIcheckT}, \ref{prop:typeIIcheckT}, \ref{prop:No103checkT}, \ref{prop:No122checkT} and \ref{prop:No19checkT}.

\section{Example of non-rational Fano $3$-folds and absolute complexity} \label{sec:ex}

We recall the rationality criterion in terms of absolute complexity given in \cite{BMSZ}.

\begin{Def}[{\cite[Definition 1.7]{BMSZ}}]
Let $X$ be a proper variety of dimension $n$ and let $(X,\Delta)$ be a log pair. 
The {\it absolute complexity} $\gamma = \gamma (X,\Delta)$ of $(X,\Delta)$ is $n+\rho-d$, where $\rho$ is the rank of the group of Weil divisors modulo algebraic equivalence and $d$ is the sum of the coefficients of $\Delta$.
\end{Def}

\begin{Thm}[{\cite[Theorem 1.8]{BMSZ}}] \label{thm:BMSZ}
Let $X$ be a proper variety.
Suppose that $(X,\Delta)$ is log canonical and $-(K_X+\Delta)$ is nef.

If $\gamma (X,\Delta) < 3/2$, then there is a proper finite morphism $Y \to X$ of degree at most two, which is \'{e}tale outside a closed subset of codimension at least two, such that $Y$ is rational.

In particular if $\Cl (X)$ contains no $2$-torsion then $X$ is rational.
\end{Thm}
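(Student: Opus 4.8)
The plan is to derive the statement from the central structural result of \cite{BMSZ}: a log canonical pair $(X,\Delta)$ with $-(K_X+\Delta)$ nef and complexity $\gamma(X,\Delta)<1$ is a toric pair (after possibly enlarging the boundary), and toric varieties are rational; the value $3/2$ is the threshold at which rationality still survives, but only up to an index-two cover. So the first step is to isolate the main claim: the existence of a proper finite $\varphi\colon Y\to X$ of degree at most two, \'etale away from codimension two, with $Y$ rational. Granting this, the ``in particular'' clause is formal. Over the smooth locus $U$ of $X$ the cover $\varphi$ is finite \'etale of degree at most two, hence either it is an isomorphism or it splits there (so $X$ is birational to the rational variety $Y$, hence rational) or it is a connected double cover of $U$; connected double covers are classified by $2$-torsion classes in $A_{n-1}(X)$ (in characteristic zero, by $2$-torsion line bundles on $U$, and any such class survives in $A_{n-1}(X)$), so the hypothesis that $A_{n-1}(X)$ has no $2$-torsion excludes the last case and leaves $X$ rational.

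For the main claim the natural strategy is induction on $\dim X+\rho$, where $\rho=\rank A_{n-1}(X)$. First replace $X$ by a small $\mbQ$-factorialization so that $K_X+\Delta$ is $\mbQ$-Cartier; the complexity does not increase. Then run a $(K_X+\Delta)$-MMP: since $-(K_X+\Delta)$ is nef, every step is a divisorial contraction or a Mori fibre contraction, the complexity is non-increasing, and rationality (and, with care, the cover structure) descends through divisorial contractions, so one reduces to a Mori fibre space $f\colon X\to S$. Now apply adjunction in two directions. Restricting $\Delta$ to a general fibre $F$ produces a pair $(F,\Delta_F)$ with $-(K_F+\Delta_F)$ ample and complexity still $<3/2$, so by induction $F$ admits the desired cover; the canonical bundle formula for $f$ equips $S$ with a boundary $\Delta_S$ for which $-(K_S+\Delta_S)$ is nef and the complexity is again $<3/2$, so $S$ admits one too. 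One then has to patch these fibrewise and basewise covers over the finitely many degenerate fibres of $f$ into a single finite cover of $X$ of degree at most two. It is precisely the canonical bundle formula and the ``different'' in adjunction that can force a square root -- an honest degree-two cover \'etale in codimension two that cannot be trivialized -- which is where the factor of two originates.

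The base case $\dim X=1$ is immediate: $\rho=1$, so $\gamma=2-d$ with $d$ the sum of the coefficients of $\Delta$, and $\gamma<3/2$ gives $d>\tfrac12$, whereupon nefness of $-(K_X+\Delta)$ forces $X$ of genus $0$, i.e.\ $X\cong\mbP^1$, which is rational with $A_0$ torsion-free. The surface case is the genuinely delicate base of the induction and is handled in \cite{BMSZ} by a direct analysis of log del Pezzo surfaces of small complexity. I expect the main obstacle to be the bookkeeping of the complexity through adjunction and the canonical bundle formula -- showing that the simultaneous drops in $d$ and in $\rho$ on the fibre and on the base keep $\gamma$ below $3/2$ on both -- and, once the covers of $F$ and of $S$ are produced inductively, the harder problem of showing they are mutually compatible and glue to a genuine finite morphism $Y\to X$ of controlled degree, rather than merely a birational one. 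Making this gluing work, and pinning down exactly when the degree is forced to be two, is the technical core of \cite{BMSZ}.
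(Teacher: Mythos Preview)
This theorem is not proved in the paper at all: it is quoted verbatim from \cite{BMSZ} and used as a black box in Section~\ref{sec:ex} to argue that the bound $\gamma<3/2$ is sharp. There is therefore no ``paper's own proof'' to compare your proposal against. What you have written is a plausible outline of the strategy of \cite{BMSZ} itself, not of anything in the present paper.

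That said, a couple of remarks on your sketch. Your derivation of the ``in particular'' clause is essentially the right idea, but the logic could be tightened: if $\varphi\colon Y\to X$ has degree two and is \'etale in codimension one, then $\varphi_*\mcO_Y\cong\mcO_X\oplus\mcL$ for a reflexive rank-one sheaf $\mcL$ with $\mcL^{[2]}\cong\mcO_X$; absence of $2$-torsion in $A_{n-1}(X)$ forces $\mcL\cong\mcO_X$, so $Y$ would be disconnected, contradicting the rationality (hence irreducibility) of $Y$. Thus $\deg\varphi=1$ and $X\cong Y$ is rational. Your inductive outline for the main claim captures the flavour of the MMP-plus-adjunction approach, but as you yourself note, the real content---tracking complexity through the canonical bundle formula and gluing the fibrewise and basewise covers---is exactly what \cite{BMSZ} does, and your proposal does not go beyond restating that this is hard. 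Since the present paper neither attempts nor needs any of this, your proposal is simply orthogonal to what is being asked.
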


In \cite{BMSZ}, various examples are provided in order to show that the above criterion is sharp in many aspects (e.g.\ we cannot drop log canonicity of $(X,\Delta)$, nef-ness of $-(K_X+\Delta)$, or the non-existence of $2$-torsion of $\Cl (X)$, etc.).
However, no example is provided to show that the inequality $\gamma < 3/2$ is sharp.
The aim of this section is to show that we cannot relax the inequality $\gamma < 3/2$ in Theorem \ref{thm:BMSZ} at least in dimension $3$.

Let $X = X_6 \subset \mbP (1,1,2,2,3)$ be a very general weighted hypersurfaces of degree $6$ defined over $\mbC$.
We see that the singular locus of $X$ consist of $3$ points $\msp_1,\msp_2,\msp_3$ of type $\frac{1}{2} (1,1,1)$.
Let $H_1, H_2$ be general members of the pencil $|\mcO_X (1)|$ and $D$ a general member of $|\mcO_X (2)|$.
Since $X$ is (very) general, we can assume the following.

\begin{enumerate}
\item $D$ is smooth and it does not pass through $\msp_1,\msp_2,\msp_3$.
\item $H_i$  has a du Val singularity of type $\mathrm{A}_1$ at $\msp_1, \msp_2, \msp_3$ and smooth elsewhere.
\item The scheme-theoretic intersections $H_1 \cap H_2$, $H_1 \cap D$ and $H_2 \cap D$ are nonsingular curves.
\end{enumerate}

We set $\Delta = H_1 + H_2 + \frac{1}{2} D$.

\begin{Lem}
The pair $(X, \Delta)$ is log canonical, $K_X + \Delta \sim_{\mbQ} 0$ and $\gamma (X,\Delta) = 3/2$.
\end{Lem}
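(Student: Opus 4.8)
The plan is to verify the three assertions separately, in increasing order of difficulty: first the numerical identity $K_X+\Delta\sim_{\mbQ}0$, then the computation of $\gamma(X,\Delta)=3/2$, and finally the log canonicity of $(X,\Delta)$, which is the substantive point.

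\emph{The canonical class.} Since $X=X_6\subset\mbP(1,1,2,2,3)$ is a quasi-smooth well-formed hypersurface, adjunction gives $\omega_X\cong\mcO_X(6-(1+1+2+2+3))=\mcO_X(-3)$, so $-K_X\sim_{\mbQ}3A$ where $A$ is the positive generator of $\Cl(X)\cong\mbZ$ (cf.\ Remark \ref{rem:cl}; note $X$ has index $1$ in the orbifold sense but we work with the ample generator $\mcO_X(1)$). With $H_1,H_2\in|\mcO_X(1)|$ and $D\in|\mcO_X(2)|$ we get $\Delta=H_1+H_2+\tfrac12 D\sim_{\mbQ}(1+1+1)A=3A\sim_{\mbQ}-K_X$, hence $K_X+\Delta\sim_{\mbQ}0$. (In particular $-(K_X+\Delta)\sim_{\mbQ}0$ is nef, which is needed to invoke Theorem \ref{thm:BMSZ} afterwards.)

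\emph{The absolute complexity.} By Remark \ref{rem:cl} the Weil divisor class group $\Cl(X)\cong\mbZ$, and since algebraic equivalence is coarser than linear equivalence, the rank $\rho$ of Weil divisors modulo algebraic equivalence is $1$. The dimension is $n=3$, and the sum of coefficients of $\Delta=H_1+H_2+\tfrac12 D$ is $d=1+1+\tfrac12=\tfrac52$. Hence $\gamma(X,\Delta)=n+\rho-d=3+1-\tfrac52=\tfrac32$.

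\emph{Log canonicity.} This is the main obstacle. I would check the discrepancy condition locally, at three kinds of loci. Away from $\Sing X$ and away from the pairwise intersections of $H_1,H_2,D$, the pair is snc (each of $H_1,H_2,D$ is smooth there by (2),(3) and the coefficients $1,1,\tfrac12$ are $\le 1$), so log canonical. Along the smooth curves $H_1\cap H_2$, $H_1\cap D$, $H_2\cap D$ (away from the singular points of $X$, which $D$ avoids by (1) and which the $H_i$ meet only at the $\tfrac12(1,1,1)$ points): here two of the divisors cross, with coefficient sums $1+1=2$ at $H_1\cap H_2$ and $1+\tfrac12=\tfrac32$ at $H_i\cap D$, and by (3) these intersections are snc curves on the smooth locus of $X$; an snc pair with two components of coefficients $a,b$ is log canonical iff $a\le1$, $b\le1$ and $a+b\le 2$, which holds in all three cases (the boundary case $2\le 2$ at $H_1\cap H_2$ being exactly lc). The genuinely delicate part is the analysis at the quotient singular points $\msp_1,\msp_2,\msp_3$ of type $\tfrac12(1,1,1)$: there $D$ does not pass through, so locally only $\Delta'=H_1+H_2$ is relevant, each $H_i$ carrying an $\mathrm{A}_1$ du Val singularity at $\msp_j$ by (2). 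I would pass to the cyclic quotient chart, write $X$ analytically-locally as $\mbA^3/\tfrac12(1,1,1)$ with the $H_i$ pulled back to surfaces through the origin, and compute the log discrepancy either by an explicit weighted blow-up resolving the quotient singularity and both $H_i$ simultaneously, or by using inversion of adjunction: restrict to one of the $H_i$ and check that $(H_1,\,H_2|_{H_1})$ is lc near $\msp_j$, where $H_1$ has an $\mathrm{A}_1$ singularity. Since a general pencil member $H_i$ passing through a $\tfrac12(1,1,1)$ point acquires exactly an ordinary double point (a du Val $\mathrm{A}_1$ surface singularity, which is canonical hence in particular lc as a variety), and $H_2|_{H_1}$ is a general curve through that node, the subadjunction/ADE bookkeeping should give log canonicity, with the pair again landing on the lc boundary. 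I expect this final local computation at the $\tfrac12(1,1,1)$ points — matching the sharpness of the $\gamma<3/2$ bound — to be where essentially all the work lies; everything else is routine.
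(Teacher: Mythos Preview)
Your plan is essentially correct, and one of the two routes you propose at the $\tfrac12(1,1,1)$ points---the explicit blowup---is exactly what the paper does. The paper, however, organizes the argument more cleanly: rather than splitting into ``smooth locus of $X$'' versus ``singular points'' and invoking inversion of adjunction, it performs the single blowup $\varphi\colon Y\to X$ at $\msp_1,\msp_2,\msp_3$ (which resolves all three $\tfrac12(1,1,1)$ points, with $E_i\cong\mbP^2$), computes
\[
K_Y+\tilde\Delta+\tfrac12(E_1+E_2+E_3)=\varphi^*(K_X+\Delta),
\]
and then checks that the support of $\tilde\Delta+\tfrac12\sum E_i$ is simple normal crossing on the smooth variety $Y$, with all coefficients $\le 1$. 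This handles the smooth locus and the quotient singularities in one stroke, avoiding any appeal to inversion of adjunction or to the covering-pair formalism. Your alternative route via inversion of adjunction on $(H_1,H_2|_{H_1})$ through an $\mathrm A_1$ point would work, but it is more delicate and unnecessary here.

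Two minor corrections to your writeup. First, the log canonicity condition for an SNC pair is simply that every coefficient be $\le 1$; the extra inequality ``$a+b\le 2$'' you state for two components is redundant (and is not the right formulation in higher codimension). Second, your parenthetical remark that $X$ has ``index $1$ in the orbifold sense'' is off: since $-K_X\sim_{\mbQ}3A$ with $A$ the ample generator, the Fano index is $3$ (this is family No.~107 in Table~\ref{qswh}). Neither point affects the argument.
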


\begin{proof}
It is clear that $K_X + \Delta \sim_{\mbQ} 0$.
Since $X$ is $\mbQ$-factorial and is of Picard number $1$, we have $\gamma (X,\Delta) = 3 + 1 - (1 + 1 + 1/2) = 3/2$.

It remains to show that $(X,\Delta)$ is log canonical.
Let $\varphi \colon Y \to X$ be the blowup of $X$ at the points $\msp_1,\msp_2,\msp_3$ and $E_i \cong \mbP^2$ the exceptional divisor over $\msp_i$.
We have
\[
K_Y + \tilde{\Delta} + \frac{1}{2} (E_1 + E_2 + E_3) = \varphi^* (K_X + \Delta),
\]
where $\tilde{\Delta}$ is the proper transform of $\Delta$.
We observe the following:
\begin{itemize}
\item $\tilde{H}_1, \tilde{H}_2, \tilde{D}$ are all smooth.
\item $\tilde{D} \cap E_i = E_j  \cap E_k = \emptyset$ for any $i, j, k$ with $j \ne k$,
\item $\tilde{H}_i$ intersects $E_k$ transversally along a line in $E_k$ for $i = 1,2$ and $k = 1,2,3$, and
\item $\tilde{H}_1 \cap \tilde{H}_2 \cap E_k$ is a point for $k = 1,2,3$.
\end{itemize} 
This means that $\tilde{\Delta} + \frac{1}{2} (E_1+E_2+E_3)$ is a simple normal crossing divisor, and thus $(X,\Delta)$ is log canonical (see also Remark \ref{rem:lcpair} below).
\end{proof}

\begin{Rem} \label{rem:lcpair}
We give an another proof for the log canonicity of $(X,\Delta)$.
Since $X$ and $H_1$, $H_2$, $D$ are general, we see that the support of $\Delta$ is simple normal crossing outside the points $\msp_1, \msp_2, \msp_3$.
Hence $(X,\Delta)$ is log canonical outside $\msp_1, \msp_2, \msp_3$.
The germ $(\msp_i \in X)$ admits an orbifold chart $\tau_i \colon V_i \to (\msp_i \in X)$, where $V_i$ is smooth and $\msp_i \in X$ is the quotient of $V_i$ under a suitable $(\mbZ/2 \mbZ)$-action.
Note that $\tau_i$ is \'etale outside $\msp_i$.
Again by the generality of $X, H_1, H_2, D$, we see that $\tau_i^*\Delta$ has a simple normal crossing support.
This implies that $(V_i, \tau_i^*\Delta)$ is log canonical.
By \cite[8.12 Lemma]{Kollar2}, we conclude that $(X,\Delta)$ is log canonical at $\msp_i$ and the proof is completed.
\end{Rem}

By the main result of this paper, $X$ is not (stably) rational and $\Cl (X) \cong \mbZ$ has no torsion (see Remark \ref{rem:cl}).
Thus the rationality criterion \cite[Theorem 1.8]{BMSZ} in terms of the absolute complexity is sharp, that is, the condition $\gamma (X,\Delta) < 3/2$ cannot be weakened.

\end{document}